\newtheorem{Theorem}{Theorem}[section]
\newtheorem{Lemma}[Theorem]{Lemma}
\newtheorem{Definition}[Theorem]{Definition}
\newtheorem{Corollary}[Theorem]{Corollary}
\newtheorem{Remark}{Remark}[section]
\newcommand{\tabincell}[2]{\begin{tabular}{@{}#1@{}}#2\end{tabular}}
\title{Error Analysis of Deep Ritz Methods for Elliptic  Equations}
\author{
	Yuling Jiao\thanks{School of Mathematics and Statistics, and
		Hubei Key Laboratory of Computational Science, Wuhan University, Wuhan 430072, P.R. China. (yulingjiaomath@whu.edu.cn)}\quad\and
	Yanming Lai \thanks{School of Mathematics and Statistics,  Wuhan University, Wuhan 430072, P.R. China. (laiyanming@whu.edu.cn)}\quad\and
	Yisu Lo\thanks{Department of Mathematics, The Hong Kong University of Science and Technology,
Clear Water Bay, Kowloon, Hong Kong (yloab@connect.ust.hk)}\quad\and
 Yang Wang \thanks{Department of Mathematics, The Hong Kong University of Science and Technology,
Clear Water Bay, Kowloon, Hong Kong (yangwang@ust.hk)}\quad \and
Yunfei Yang  \thanks{Department of Mathematics, The Hong Kong University of Science and Technology,
Clear Water Bay, Kowloon, Hong Kong (yyangdc@connect.ust.hk)}
}
\begin{document}
	\maketitle
	
	\begin{abstract}
Using deep neural networks to solve PDEs has attracted  a lot of attentions recently.
However, why the deep learning method works is falling  far behind its empirical success.
In this paper, we provide a rigorous numerical analysis on deep Ritz method  (DRM) \cite{Weinan2017The} for second order elliptic equations with Drichilet, Neumann and Robin boundary condition, respectively.
We establish the first   nonasymptotic convergence rate in $H^1$ norm   for   DRM  using   deep networks with smooth  activation functions including logistic and hyperbolic tangent functions.
 Our results show   how to set the  hyper-parameter  of depth and width to achieve the desired convergence rate in terms of number of training samples.
	\end{abstract}

\begin{keywords}
DRM, Neural Networks, Approximation error,  Rademacher complexity, Chaining method, Pseudo dimension, Covering number.
\end{keywords}

\begin{AMS}
65N99
\end{AMS}

\section{Introduction}

Partial differential equations (PDEs) are one of the fundamental mathematical models in studying a variety of phenomenons arising in science and engineering. There have been established many conventional numerical methods successfully for solving PDEs in the case of low dimension $(d\leq3)$, particularly the finite element method \cite{brenner2007mathematical,ciarlet2002finite,Quarteroni2008Numerical,Thomas2013Numerical,Hughes2012the}. However,
one will encounter some  difficulties in both of theoretical analysis and numerical implementation when extending conventional numerical schemes  to high-dimensional PDEs. 
The classic analysis of convergence, stability and any other properties will be trapped into troublesome situation due to the complex construction of finite element space \cite{ciarlet2002finite,brenner2007mathematical}. Moreover, in the term of practical computation, the scale of the discrete problem will increase exponentially with respect to the  dimension.

 Motivated by the well-known fact that deep learning method for  high-dimensional data analysis has been achieved great successful applications in discriminative, generative and reinforcement learning \cite{he2015delving,Goodfellow2014Generative,silver2016mastering}, solving high dimensional PDEs with  deep neural networks becomes an extremely potential  approach and  has  attracted much attentions  \cite{Cosmin2019Artificial,Justin2018DGM,DeepXDE,raissi2019physics,Weinan2017The,Yaohua2020weak,Berner2020Numerically,Han2018solving}. Roughly speaking, these works can be divided into three categories. The first category is using deep neural network to improve  classical numerical methods, see for example  \cite{Kiwon2020Solver,Yufei2020Learning,hsieh2018learning,Greenfeld2019Learning}.
In the second category, the neural operator is introduced to learn mappings between infinite-dimensional spaces with neural networks \cite{Li2020Advances,anandkumar2020neural,li2021fourier}.
 For the last category, one utilizes deep neural networks to approximate the solutions of PDEs directly including physics-informed neural networks (PINNs) \cite{raissi2019physics}, deep Ritz method (DRM)  \cite{Weinan2017The} and deep  Galerkin method (DGM) \cite{Yaohua2020weak}. PINNs  is based on residual minimization for solving PDEs  \cite{Cosmin2019Artificial,Justin2018DGM,DeepXDE,raissi2019physics}.
 Proceed from the variational form, \cite{Weinan2017The,Yaohua2020weak,Xu2020finite} propose neural-network based methods related to classical Ritz and Galerkin method. In \cite{Yaohua2020weak}, weak adversarial networks (WAN) are proposed inspired by Galerkin method. Based on Ritz method, \cite{Weinan2017The} proposes the deep Ritz method (DRM) to solve variational problems corresponding to a class of PDEs.

\subsection{Related works and contributions}
\par
The idea of using neural networks to solve PDEs goes back to 1990's \cite{Isaac1998Artificial,Dissanayake1994neural}.
Although there are great empirical achievements in recent
several years, a challenging and interesting  question is to provide a
rigorous error analysis such as finite element method.
Several recent efforts  have been devoted to making processes along this line,  see for example  \cite{e2020observations,Luo2020TwoLayerNN,Mishra2020EstimatesOT,Mller2021ErrorEF,lu2021priori,hong2021rademacher,Shin2020ErrorEO,Wang2020WhenAW,e2021barron}.
In \cite{Luo2020TwoLayerNN}, least squares minimization method with two-layer neural networks is studied, the optimization error under the assumption of over-parametrization and generalization error without the over-parametrization assumption are analyzed.
In \cite{lu2021priori,Xu2020finite,hong2021priori}, the generalization error bounds of two-layer neural networks are derived
via assuming that the exact solutions lie in spectral Barron space.
Although the  studies  in \cite{lu2021priori,Xu2020finite} can overcome the curse of dimensionality,
it should be pointed out that it is difficult to generalize these results to deep neural networks or to the situation where the underlying solutions are living  in general Sobolev spaces.

\textbf{ Two  important   questions have not been addressed   in the above mentioned related study are those:  Can we provide a convergence result of DRM only requiring the target solution living in $H^2$ ?  How to determine the depth and width to achieve the  desired convergence rate ?}
In this paper, we give a firm answers on these  questions by  providing  an error analysis of using DRM with  sigmoidal deep neural networks  to solve second order elliptic  equations with  Drichilet, Neumann and Robin boundary condition, respectively.

Let $u_{\phi_\mathcal{A}}$ be the solution of a random solver for DRM (i.e., $u_{\phi_\mathcal{A}}$ is the solution of $(\ref{optimization})$) and use the notation $\mathcal{N}_{\rho}\left(\mathcal{D}, \mathfrak{n}_{\mathcal{D}}, B_{\theta}\right)$ to refer to the collection of functions implemented by a $\rho-$neural network with depth $\mathcal{D}$, total number of nonzero weights $\mathfrak{n}_{\mathcal{D}}$ and each weight being bounded by $B_{\theta}$. Set $\rho=\frac{1}{1+e^{-x}}$ or  $\frac{e^x-e^{-x}}{e^x+e^{-x}}$. Our main contributions  are as follows:
\begin{itemize}
\item
Let $u_R$ be the weak solution of Robin problem $(\ref{second order elliptic equation})(\ref{robin})$. For any $\epsilon>0$ and $\mu \in (0,1)$, set
the parameterized function class
\begin{equation*}
\mathcal{P}=\mathcal{N}_{\rho}\left(C\log(d+1),C(d,\beta)\epsilon^{-d /(1-\mu )},C(d,\beta) \epsilon^{-(9d+8)/(2-2\mu)}\right)
\end{equation*}
and number of samples
\begin{equation*}
N=M=C(d,\Omega, coe,\alpha,\beta)\epsilon^{-Cd\log(d+1)/(1-\mu)},
\end{equation*}
if the optimization error of $u_{\phi_\mathcal{A}}$ is $\mathcal{E}_{opt}\le \epsilon$, then
\begin{equation*}
\mathbb{E}_{\{{X_i}\}_{i=1}^{N},\{{Y_j}\}_{j=1}^{M}}\|u_{\phi_\mathcal{A}}-u_R\|_{H^1(\Omega)}\leq C(\Omega, coe,\alpha)\epsilon.
\end{equation*}
\item Let $u_D$ be the weak solution of Dirichlet problem $(\ref{second order elliptic equation})(\ref{dirichlet})$. Set $\alpha=1,g=0$. For any $\epsilon>0$, let $\beta=C(coe)\epsilon$ as the penalty parameter, set the parameterized function class
\begin{equation*}
\mathcal{P}=\mathcal{N}_{\rho}\left(C\log(d+1),C(d)\epsilon^{-5d /2(1-\mu )},C(d) \epsilon^{-(45d+40)/(4-4\mu)}\right)
\end{equation*}
and number of samples
\begin{equation*}
N=M=C(d,\Omega, coe)\epsilon^{-Cd\log(d+1)/(1-\mu)},
\end{equation*}
if the optimization error $\mathcal{E}_{opt} \le \epsilon$, then
\begin{equation*}
\mathbb{E}_{\{{X_i}\}_{i=1}^{N},\{{Y_j}\}_{j=1}^{M}}\|u_{\phi_\mathcal{A}}-u_D\|_{H^1(\Omega)}\leq C(\Omega, coe)\epsilon.
\end{equation*}
\end{itemize}

We summarize the related works and our results in the following table \ref{RWK}.

{\small
\begin{table}[ht!]
	
	\centering
	\begin{threeparttable}
		\caption{Previous works and our result}\label{RWK}
		\label{compare}
		\begin{tabular}{ccccc}
			\toprule
			Paper&\tabincell{c}{Depth and\\ activation functions }&Equation(s)&\tabincell{c}{Regularity  \\ Condition}\cr
			\midrule
			\cite{Luo2020TwoLayerNN}&\tabincell{c}{$\mathcal{D}=2$\\ $\mathrm{ReLU}^3$}&\tabincell{c}{Second order \\ differential equation}&$u^*\in \mathrm{Barron}\ \  \mathrm{class}$\cr
\midrule
			\cite{lu2021priori}&\tabincell{c}{$\mathcal{D}=2$\\ Softplus}&\tabincell{c}{Poisson equation \\ and Schr$\mathrm{\ddot{o}}$dinger equation}&$u^*\in \mathrm{Barron} \ \  \mathrm{class}$\cr
\midrule
			\cite{hong2021priori}&\tabincell{c}{$\mathcal{D}=2$\\$\mathrm{ReLU}^k$}&\tabincell{c}{2$m$-th order \\ differential equation}&$u^*\in \mathrm{Barron}\ \  \mathrm{class}$\cr
\midrule
            \cite{duan2021convergence}&\tabincell{c}{$\mathcal{D}=\mathcal{O}(\log d)$ \\ $\mathrm{ReLU}^2$}&\tabincell{c}{Second order \\ elliptic equation}&\tabincell{c}{$u^*\in C^2$}\cr
            \midrule
            This paper&\tabincell{c}{$\mathcal{D}=\mathcal{O}(\log d)$ \\ Logistic and \\ Hyperbolic tangent}&\tabincell{c}{Second order \\ elliptic equation}&$u^*\in H^2$\cr
			\bottomrule
		\end{tabular}
	\end{threeparttable}
\end{table}
}

The rest of the paper are organized as follows.  In Section 2,  we give some preliminaries.  In Section 3, we  present the DRM method and the error decomposition results for analysis of DRM. In Section 4 and 5, we give   detail analysis
on the approximation error and statistical error. In Section 6, we present our main results.
We give conclusion and short discussion in Section 7.

\section{Neural Network}
Due to its strong expressivity, neural network function class plays an important role in machine learning. A variety of neural networks are choosen as parameter function classes in the training process. We now introduce some notation related to neural network which will simplify our later discussion. Let $\mathcal{D}\in\mathbb{N}^+$. A function $\mathbf{f}: \mathbb{R}^{d} \rightarrow \mathbb{R}^{n_{\mathcal{D}}}$ implemented by a neural network is defined by
\begin{equation}\label{nn}
	\begin{array}{l}
		\mathbf{f}_{0}(\mathbf{x})=\mathbf{x},\\
		\mathbf{f}_{\ell}(\mathbf{x})=\mathbf{\rho}\left(A_{\ell} \mathbf{f}_{\ell-1}+\mathbf{b}_{\ell}\right)
		\quad \text { for } \ell=1, \ldots, \mathcal{D}-1, \\
		\mathbf{f}:=\mathbf{f}_{\mathcal{D}}(\mathbf{x})=A_{\mathcal{D}}\mathbf{f}_{\mathcal{D}-1}+\mathbf{b}_{\mathcal{D}},
	\end{array}
\end{equation}
where $A_{\ell}=\left(a_{ij}^{(\ell)}\right)\in\mathbb{R}^{n_{\ell}\times n_{\ell-1}}$ and $\mathbf{b}_{\ell}=\left(b_i^{(\ell)}\right)\in\mathbb{R}^{n_{\ell}}$. $\rho$ is called the activation function and acts componentwise. $\mathcal{D}$ is called the depth of the network and $\mathcal{W}:=\max\{n_{\ell}:\ell=1,\cdots,\mathcal{D}\}$ is called the width of the network. $\phi = \{A_{\ell},\mathbf{b}_{\ell}\}_{\ell}$ are called the weight parameters. For convenience, we denote $\mathfrak{n}_i$, $i=1,\cdots,\mathcal{D}$, as the number of nonzero weights on the first $i$ layers in the representation (\ref{nn}). Clearly $\mathfrak{n}_{\mathcal{D}}$ is the total number of nonzero weights. Sometimes we denote a function implemented by a neural network as $\mathbf{f}_{\rho}$ for short. We use the notation $\mathcal{N}_{\rho}\left(\mathcal{D}, \mathfrak{n}_{\mathcal{D}}, B_{\theta}\right)$ to refer to the collection of functions implemented by a $\rho-$neural network with depth $\mathcal{D}$, total number of nonzero weights $\mathfrak{n}_{\mathcal{D}}$ and each weight being bounded by $B_{\theta}$.

\section{Deep Ritz Method and Error Decomposition}
	Let $\Omega$ be a convex bounded open set in $\mathbb{R}^d$ and assume that $\partial\Omega\in C^{\infty}$. Without loss of generality we assume that $\Omega\subset[0,1]^d$. We consider the following second order  elliptic equation:
\begin{equation} \label{second order elliptic equation}
		-\triangle u +  w u = f  \text { in } \Omega
\end{equation}
with three kinds of boundary condition:
\begin{subequations}
\begin{align}
	u&=0\text { on } \partial \Omega\label{dirichlet}\\
	\frac{\partial u}{\partial n}&=g\text { on } \partial \Omega\label{neumann}\\
	\alpha u+\beta\frac{\partial u}{\partial n}&=g\text { on } \partial \Omega,\quad\alpha,\beta\in\mathbb{R},\beta\neq0\label{robin}
\end{align}
\end{subequations}
which are called Drichilet, Neumann and Robin boundary condition, respectively. Note that for Drichilet problem, we only consider the homogeneous boundary condition here since the inhomogeneous case can be turned into homogeneous case by translation. We also remark that Neumann condition $(\ref{neumann})$ is covered by Robin condition $(\ref{robin})$. Hence in the following we only consider Dirichlet problem and Robin problem.

We make the following assumption on the known terms in equation:
\begin{enumerate}[(A)]
\item $\qquad f\in L^\infty(\Omega)$, $g\in H^{1/2}(\Omega)$, $w\in L^{\infty}(\Omega)$, $w\geq c_w$
\end{enumerate}
where $c_w$ is some positive constant. In the following we abbreviate $$C\left(\|f\|_{L^\infty(\Omega)},\|g\|_{H^{1/2}(\Omega)},\|w\|_{L^{\infty}(\Omega)},c_w\right),$$  constants depending on the known terms in equation, as $C(coe)$ for simplicity.

For problem $(\ref{second order elliptic equation})(\ref{dirichlet})$, the varitional problems is to find $u\in H_0^1(\Omega)$ such that
\begin{subequations}
\begin{equation} \label{variational dirichlet}
(\nabla u,\nabla v)+(wu,v)=(f,v),\quad\forall v\in H_{0}^1(\Omega).
\end{equation}
The corresponding minimization problem is
\begin{equation} \label{minimization dirichlet}
\min_{u\in H_0^1(\Omega)}\frac{1}{2}\int_{\Omega}\left(|\nabla u|^2+wu^2-2fu\right)dx.
\end{equation}
\end{subequations}
\begin{Lemma} \label{uD regularity}
Let (A) holds. Let $u_D$ be the solution of problem $(\ref{variational dirichlet})$(also $(\ref{minimization dirichlet})$). Then $u_D\in H^2(\Omega)$.
\end{Lemma}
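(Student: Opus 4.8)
The plan is to deduce the lemma from the classical existence theory together with the standard global $H^2$ regularity theorem for the Dirichlet Laplacian on a smooth domain.

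First I would record existence, uniqueness, and an a priori $H^1$ bound. The bilinear form $a(u,v):=(\nabla u,\nabla v)+(wu,v)$ is bounded on $H_0^1(\Omega)\times H_0^1(\Omega)$ because $w\in L^\infty(\Omega)$, and it is coercive since $a(u,u)=\|\nabla u\|_{L^2(\Omega)}^2+(wu,u)\ge\|\nabla u\|_{L^2(\Omega)}^2+c_w\|u\|_{L^2(\Omega)}^2$ with $c_w>0$; the functional $v\mapsto(f,v)$ is bounded on $H_0^1(\Omega)$ because $f\in L^\infty(\Omega)\subset L^2(\Omega)$ on the bounded set $\Omega$. Lax--Milgram then gives a unique $u_D\in H_0^1(\Omega)$ solving $(\ref{variational dirichlet})$, with $\|u_D\|_{H^1(\Omega)}\le C(coe)$.

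Next I would rewrite $(\ref{variational dirichlet})$ as a Poisson problem with an $L^2$ right-hand side. Put $\tilde f:=f-wu_D$; then $\tilde f\in L^2(\Omega)$ since $f\in L^2(\Omega)$ and $wu_D\in L^2(\Omega)$ (using $w\in L^\infty(\Omega)$ and $u_D\in L^2(\Omega)$), and $\|\tilde f\|_{L^2(\Omega)}\le\|f\|_{L^2(\Omega)}+\|w\|_{L^\infty(\Omega)}\|u_D\|_{L^2(\Omega)}\le C(coe)$. The identity $(\ref{variational dirichlet})$ says exactly that $u_D\in H_0^1(\Omega)$ is the weak solution of $-\triangle u_D=\tilde f$ in $\Omega$ with homogeneous Dirichlet data. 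Since $\partial\Omega\in C^\infty$ (so in particular $C^2$) and $\Omega$ is bounded, the global elliptic regularity theorem for the Dirichlet Laplacian (see, e.g., \cite{brenner2007mathematical,ciarlet2002finite}; one could equally invoke the $H^2$ estimate valid on convex domains) yields $u_D\in H^2(\Omega)$ together with $\|u_D\|_{H^2(\Omega)}\le C(\Omega)\|\tilde f\|_{L^2(\Omega)}\le C(\Omega,coe)$, which proves the claim.

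There is no real obstacle at the level of this lemma: it is a direct application of a standard result. If one wanted a self-contained argument, the only nontrivial ingredient would be the boundary part of the regularity estimate --- locally flattening $\partial\Omega$ via a $C^\infty$ chart, bounding the tangential difference quotients of $u_D$ in $H^1$, and then recovering the one missing second-order normal derivative algebraically from the equation, $\partial_{nn}u_D=-\tilde f-\sum_{i<d}\partial_{ii}u_D$ in the flattened coordinates. The interior estimate is the easier difference-quotient argument, and the two are patched together with a partition of unity. Since only the qualitative statement $u_D\in H^2(\Omega)$ is needed here, citing the theorem is enough.
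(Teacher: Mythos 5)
Your proof is correct and takes essentially the same approach the paper intends: the paper's entire proof is ``See \cite{evans1998partial},'' i.e.\ cite classical elliptic regularity, and your Lax--Milgram plus $-\triangle u_D = f - wu_D \in L^2(\Omega)$ plus global $H^2$ regularity on a smooth (or convex) domain is exactly the argument that citation points to, just spelled out.
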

\begin{proof}
See \cite{evans1998partial}.
\end{proof}

For problem $(\ref{second order elliptic equation})(\ref{robin})$, the variational problem is to find $u\in H^1(\Omega)$ such that
\begin{subequations}
\begin{equation} \label{variational robin}
(\nabla u,\nabla v)+(wu,v)+\frac{\alpha}{\beta}(T_0u,T_0v)|_{\partial\Omega}=(f,v)+\frac{1}{\beta}(g,T_0v)|_{\partial\Omega},\quad\forall v\in H^1(\Omega)
\end{equation}
where $T_0$ is zero order trace operator. The corresponding minimization problem is
\begin{equation} \label{minimization robin}
\min_{u\in H^1(\Omega)}\int_{\Omega}\left(\frac{1}{2}|\nabla u|^2+\frac{1}{2}wu^2-fu\right)dx
+\frac{1}{\beta}\int_{\partial\Omega}\left(\frac{\alpha}{2}(T_0u)^2-gT_0u\right)ds.
\end{equation}
\end{subequations}
\begin{Lemma} \label{uR regularity}
	Let (A) holds. Let $u_R$ be the solution of problem $(\ref{variational robin})$(also $(\ref{minimization robin})$). Then $u_R\in H^2(\Omega)$ and $\|u_R\|_{H^2(\Omega)}\leq\frac{ C(coe)}{\beta}$ for any $\beta>0$.
\end{Lemma}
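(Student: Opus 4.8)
The plan is to combine elliptic regularity theory for the Robin boundary value problem on a smooth convex domain with a careful bookkeeping of how the a priori estimates depend on the parameter $\beta$. First I would recall that the variational problem $(\ref{variational robin})$ is the weak formulation of the classical boundary value problem $-\triangle u_R + w u_R = f$ in $\Omega$ with $\alpha u_R + \beta \,\partial u_R/\partial n = g$ on $\partial\Omega$. Since $\partial\Omega\in C^\infty$, $f\in L^\infty(\Omega)\subset L^2(\Omega)$, $g\in H^{1/2}(\Omega)$ (so that after dividing by $\beta$ the boundary datum has the right trace regularity), and $w\in L^\infty(\Omega)$, the standard $H^2$ elliptic regularity theorem for oblique/Robin problems (see \cite{evans1998partial}, or Gilbarg--Trudinger) gives $u_R\in H^2(\Omega)$ together with an estimate of the form
\begin{equation*}
\|u_R\|_{H^2(\Omega)}\leq C\left(\|f\|_{L^2(\Omega)}+\tfrac{1}{\beta}\|g\|_{H^{1/2}(\partial\Omega)}+\|u_R\|_{L^2(\Omega)}\right),
\end{equation*}
where $C$ depends on $\Omega$, $\|w\|_{L^\infty}$, and on $\alpha/\beta$ only through an upper bound. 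To make this clean, I would absorb all such dependence into $C(coe)$ and a factor of $1/\beta$.

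The second step is to control the lower-order term $\|u_R\|_{L^2(\Omega)}$ (equivalently $\|u_R\|_{H^1(\Omega)}$) by testing the weak form $(\ref{variational robin})$ against $v=u_R$. This yields
\begin{equation*}
\|\nabla u_R\|_{L^2}^2 + (w u_R,u_R) + \tfrac{\alpha}{\beta}\|T_0 u_R\|_{L^2(\partial\Omega)}^2 = (f,u_R) + \tfrac{1}{\beta}(g,T_0 u_R)|_{\partial\Omega}.
\end{equation*}
Using $w\geq c_w>0$ for coercivity of the interior part (and discarding the nonnegative boundary term when $\alpha\geq0$; if $\alpha<0$ one uses the trace inequality plus a small-constant absorption, which still works since $\alpha/\beta$ enters only through its size), together with Cauchy--Schwarz, the trace theorem $\|T_0 u_R\|_{H^{1/2}(\partial\Omega)}\leq C(\Omega)\|u_R\|_{H^1(\Omega)}$, and Young's inequality, I obtain
\begin{equation*}
\|u_R\|_{H^1(\Omega)}\leq C(\Omega,\|w\|_{L^\infty},c_w)\left(\|f\|_{L^2}+\tfrac{1}{\beta}\|g\|_{H^{1/2}(\partial\Omega)}\right)\leq \frac{C(coe)}{\beta}
\end{equation*}
for $0<\beta\leq 1$; for $\beta>1$ the factor $1/\beta$ only helps, and one still gets a bound by $C(coe)$, which is $\leq C(coe)/\beta$ after enlarging the constant only if $\beta\le 1$ — so I would either restrict attention to the relevant regime or state the bound as $C(coe)\max(1,1/\beta)$ and note it is $\le C(coe)/\beta$ in the regime of interest (the Dirichlet penalization uses $\beta\to 0$). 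Substituting this $H^1$ bound back into the $H^2$ estimate from step one gives $\|u_R\|_{H^2(\Omega)}\leq C(coe)/\beta$, as claimed.

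The main obstacle is not the existence of the $H^2$ bound — that is textbook elliptic regularity — but tracking the $\beta$-dependence uniformly and honestly, in particular making sure the constant in the $H^2$ estimate does not secretly blow up through the coefficient $\alpha/\beta$ in the Robin condition as $\beta\to 0$. The resolution is that the oblique-derivative regularity constant depends on the boundary coefficient only through an \emph{upper} bound on $|\alpha/\beta|$ and a \emph{lower} bound on the obliqueness (here the coefficient of $\partial u/\partial n$ is normalized to $1$ after dividing the Robin condition by $\beta$), so in the regime $\beta\le 1$ with $\alpha$ fixed we must be slightly careful: dividing through by $\beta$ turns the condition into $(\alpha/\beta)u_R + \partial u_R/\partial n = g/\beta$, and $|\alpha/\beta|$ is large. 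The cleaner route, which I would actually follow, is to keep the condition in the form $\alpha u_R + \beta\,\partial u_R/\partial n = g$, treat $\beta\,\partial_n u_R$ as the leading boundary term for $\beta$ bounded below, or — best — simply invoke the scaling $\tilde u = \beta u_R$ is \emph{not} linear-friendly, so instead one observes that the standard estimate for the Robin problem with parameters $(\alpha,\beta)$ gives a constant of the form $C(\Omega,\|w\|_\infty,c_w)(1+|\alpha|/\beta)$ multiplying $\|f\|_{L^2}$ and $\|g\|_{H^{1/2}}/\beta$; bounding $(1+|\alpha|/\beta)\le C/\beta$ for $\beta\le 1$ and folding $|\alpha|$ into $C(coe)$ yields exactly $\|u_R\|_{H^2(\Omega)}\le C(coe)/\beta$. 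I would present this last computation explicitly and relegate the underlying $H^2$ regularity to the citation \cite{evans1998partial}.
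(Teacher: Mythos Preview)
The paper does not prove this lemma at all: the entire proof reads ``See \cite{evans1998partial}.'' Your sketch is therefore far more explicit than what the paper offers, and the outline --- invoke $H^2$ elliptic regularity for the Robin/oblique-derivative problem on a smooth domain, then control the low-order term $\|u_R\|_{H^1}$ by testing $(\ref{variational robin})$ with $v=u_R$ --- is exactly the standard argument underlying such a citation.

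One step of your bookkeeping is not quite right, however. You conclude by writing $(1+|\alpha|/\beta)\bigl(\|f\|+\beta^{-1}\|g\|\bigr)\le C(coe)/\beta$, but for $g\neq 0$ and $\beta\to 0$ the left side is of order $\beta^{-2}$, not $\beta^{-1}$. Retaining the nonnegative term $(\alpha/\beta)\|T_0u_R\|_{L^2(\partial\Omega)}^2$ on the left of the energy identity and using Young's inequality improves the $H^1$ bound to $O(\beta^{-1/2})$, but since the $H^{1/2}(\partial\Omega)$ trace still costs the full $H^1$ norm, the composite $H^2$ estimate obtained by reduction to a Neumann problem is at best $O(\beta^{-3/2})$ for general $g$. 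Getting the clean $O(\beta^{-1})$ bound requires either a direct Robin regularity estimate whose constant is uniform in $\alpha/\beta\ge 0$ (true, but not the theorem actually stated in \cite{evans1998partial}), or the observation that the paper only exploits the $\beta$-dependence of this lemma in the Dirichlet penalization where $g=0$: there your argument gives $\|u_R\|_{H^1}\le C\|f\|$ independently of $\beta$, hence $\|u_R\|_{H^2}\le C(1+\beta^{-1})\|f\|\le C(coe)/\beta$ exactly as claimed. So the strategy is correct; just tighten the $\beta$-accounting or note the restriction to $g=0$ where the quantitative bound is actually used.
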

\begin{proof}
See  \cite{evans1998partial}.
\end{proof}

Intuitively, when $\alpha=1,g=0$ and $\beta\to0$, we expect that the solution of Robin problem converges to the solution of Dirichlet problem. Hence we only need to consider the Robin problem since the Dirichlet problem can be handled through a limit process. Define $\mathcal{L}$ as a functional on $H^1(\Omega)$:
\begin{equation*}
	\mathcal{L}(u):=\int_{\Omega}\left(\frac{1}{2}|\nabla u|^2+\frac{1}{2}wu^2-fu\right)dx
	+\frac{1}{\beta}\int_{\partial\Omega}\left(\frac{\alpha}{2}(T_0u)^2-gT_0u\right)ds.
\end{equation*}
The next lemma verify the assertion.
\begin{Lemma} \label{penalty convergence}
Let (A) holds. Let $\alpha=1,g=0$. Let $u_D$ be the solution of problem $(\ref{variational dirichlet})$(also $(\ref{minimization dirichlet})$) and $u_R$ the solution of problem $(\ref{variational robin})$(also $(\ref{minimization robin})$). There holds
\begin{equation*}
\|u_R-u_D\|_{H^1(\Omega)}\leq C(coe)\beta.
\end{equation*}
\end{Lemma}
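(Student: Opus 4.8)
The plan is to reduce the claim to an energy estimate for $e:=u_R-u_D$; the one delicate point is obtaining the \emph{linear} rate in $\beta$ (a direct energy estimate only gives $\sqrt\beta$), which I handle by subtracting off a suitable extension of the boundary residual of $u_D$.

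\emph{Step 1 (error equation).} By Lemma~\ref{uD regularity}, $u_D\in H^2(\Omega)$ and $\|u_D\|_{H^2(\Omega)}\le C(coe)$. Green's formula applied to $-\triangle u_D+wu_D=f$ gives $(\nabla u_D,\nabla v)+(wu_D,v)=(f,v)+(\frac{\partial u_D}{\partial n},T_0v)|_{\partial\Omega}$ for every $v\in H^1(\Omega)$. Subtracting this from the weak form $(\ref{variational robin})$ of the Robin problem (with $\alpha=1$, $g=0$) and using $T_0u_D=0$, so $T_0u_R=T_0e$, gives
\[
(\nabla e,\nabla v)+(we,v)+\tfrac1\beta(T_0e,T_0v)|_{\partial\Omega}=-\Big(\tfrac{\partial u_D}{\partial n},T_0v\Big)\Big|_{\partial\Omega},\qquad\forall\,v\in H^1(\Omega).
\]
Testing this with $v=e$ and discarding nonnegative terms yields $\|T_0e\|_{L^2(\partial\Omega)}\le C(coe)\,\beta$, but then coercivity only produces $\|e\|_{H^1(\Omega)}\le C(coe)\sqrt\beta$; the whole difficulty is to improve this $\sqrt\beta$ to $\beta$.

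\emph{Step 2 (subtracting a boundary extension).} Since $u_D\in H^2(\Omega)$, the conormal trace $\frac{\partial u_D}{\partial n}=\nabla u_D\cdot n|_{\partial\Omega}$ belongs to $H^{1/2}(\partial\Omega)$ with $\|\frac{\partial u_D}{\partial n}\|_{H^{1/2}(\partial\Omega)}\le C(coe)$ — this is exactly where the hypothesis $u_D\in H^2$ enters. Using a bounded right inverse of the trace operator, pick $z\in H^1(\Omega)$ with $T_0z=-\beta\frac{\partial u_D}{\partial n}$ and
\[
\|z\|_{H^1(\Omega)}\le C\Big\|\beta\,\tfrac{\partial u_D}{\partial n}\Big\|_{H^{1/2}(\partial\Omega)}\le C(coe)\,\beta .
\]
Subtract from the error equation of Step 1 the identity $(\nabla z,\nabla v)+(wz,v)+\tfrac1\beta(T_0z,T_0v)|_{\partial\Omega}=(\nabla z,\nabla v)+(wz,v)-(\frac{\partial u_D}{\partial n},T_0v)|_{\partial\Omega}$ (which holds because $T_0z=-\beta\frac{\partial u_D}{\partial n}$). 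The boundary forcings cancel, leaving
\[
(\nabla(e-z),\nabla v)+(w(e-z),v)+\tfrac1\beta(T_0(e-z),T_0v)|_{\partial\Omega}=-\big[(\nabla z,\nabla v)+(wz,v)\big],\qquad\forall\,v\in H^1(\Omega).
\]

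\emph{Step 3 (closing).} Take $v=e-z$. The left side is $\ge\min(1,c_w)\,\|e-z\|_{H^1(\Omega)}^2$ (drop the nonnegative boundary term and use $w\ge c_w$), while by Cauchy--Schwarz the right side is $\le(1+\|w\|_{L^\infty(\Omega)})\,\|z\|_{H^1(\Omega)}\,\|e-z\|_{H^1(\Omega)}$. Hence $\|e-z\|_{H^1(\Omega)}\le C(coe)\,\|z\|_{H^1(\Omega)}\le C(coe)\,\beta$, and the triangle inequality gives
\[
\|u_R-u_D\|_{H^1(\Omega)}=\|e\|_{H^1(\Omega)}\le\|e-z\|_{H^1(\Omega)}+\|z\|_{H^1(\Omega)}\le C(coe)\,\beta .
\]
The crux is the $\sqrt\beta\to\beta$ gain: $z$ carries the full $\mathcal O(\beta)$ ``response'' to the Neumann defect $\frac{\partial u_D}{\partial n}$ of $u_D$ and is bounded directly through $\|u_D\|_{H^2(\Omega)}$, so that the corrected error $e-z$ is driven only by a volume term of size $\mathcal O(\beta)$ and is therefore controlled by bare coercivity.
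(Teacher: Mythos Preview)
Your proof is correct and follows essentially the same approach as the paper: both rely on $u_D\in H^2(\Omega)$ to place $\partial u_D/\partial n$ in $H^{1/2}(\partial\Omega)$, then use a bounded right inverse of the trace to produce an $H^1$ function (your $z$, the paper's $\beta\phi$) with boundary trace $-\beta\,\partial u_D/\partial n$, so that the comparison of $u_R$ with $u_D+z$ yields the $O(\beta)$ bound. The only difference is presentational---the paper recasts $\mathcal{L}$ as a shifted energy $\mathcal{R}_\beta$ and uses the minimizing property $\mathcal{R}_\beta(u_R)\le\mathcal{R}_\beta(u_D+\beta\phi)=C\beta^2$, whereas you work with the error equation and test with $e-z$; these are equivalent.
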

\begin{proof}
We first have
\begin{equation*}
\int_{\Omega}\nabla u_D\cdot\nabla vdx-\int_{\partial\Omega}T_1u_Dvds+\int_{\Omega}wu_Dvdx=\int_{\Omega}fvdx,\quad\forall v\in H^1(\Omega).
\end{equation*}
with $T_1$ being first order trace operator. Hence for any $u\in H^1(\Omega)$,
\begin{align}
	\mathcal{L}(u)&=\int_{\Omega}\left(\frac{1}{2}|\nabla u|^2+\frac{1}{2}wu^2-fu\right)dx
	+\frac{1}{2\beta}\int_{\partial\Omega}(T_0u)^2ds\nonumber\\
	&=\int_{\Omega}\left(\frac{1}{2}|\nabla u|^2+\frac{1}{2}wu^2\right)dx
	+\frac{1}{2\beta}\int_{\partial\Omega}(T_0u)^2ds-\int_{\Omega}\nabla u_D\cdot\nabla udx+\int_{\partial\Omega}T_1u_Duds-\int_{\Omega}wu_Dudx,\nonumber\\
	&=\int_{\Omega}\left(\frac{1}{2}|\nabla u-\nabla u_D|^2+\frac{1}{2}w(u-u_D)^2\right)dx
	+\frac{1}{2\beta}\int_{\partial\Omega}\left(T_0u+\beta T_1u_D\right)^2ds\nonumber\\
	&\quad-\int_{\Omega}\left(\frac{1}{2}|\nabla u_D|^2+\frac{1}{2}wu_D^2\right)dx-\frac{\beta}{2}\int_{\partial\Omega}\left(T_1u_D\right)^2ds.\label{penalty convergence1}
\end{align}
Define
\begin{equation*}
\mathcal{R}_{\beta}(u)=\int_{\Omega}\left(\frac{1}{2}|\nabla u-\nabla u_D|^2+\frac{1}{2}w(u-u_D)^2\right)dx
+\frac{1}{2\beta}\int_{\partial\Omega}\left(T_0u+\beta T_1u_D\right)^2ds.
\end{equation*}
Since $u_R$ is the minimizer of $\mathcal{L}$, from $(\ref{penalty convergence1})$ we conclude that it is also the minimizer of $\mathcal{R}$.

Note $u_D\in H^2(\Omega)$(Lemma $\ref{uD regularity}$), by trace theorem we know $T_1u_D\in H^{1/2}(\partial\Omega)$ and hence there exists $\phi\in H^1(\Omega)$ such that $T_0\phi=-T_1u_D$. Set $\bar{u}=\beta\phi+u_D$, then
\begin{equation*}
C(coe)\|u_R-u_D\|_{H^1(\Omega)}^2\leq\mathcal{R}(u_R)\leq\mathcal{R}(\bar{u})=\beta^2\int_{\Omega}\left(\frac{1}{2}|\nabla \phi|^2+\frac{1}{2}w\phi^2\right)dx=C(u_D,coe)\beta^2.
\end{equation*}
\end{proof}

Note that $\mathcal{L}$ can be equivalently written as
\begin{align*}
	\mathcal{L}(u)=&|\Omega|\mathbb{E}_{X\sim U(\Omega)}\left(\frac{1}{2}|\nabla u(X)|^2+\frac{1}{2}w(X)u^2(X)-f(X)u(X)\right)\\
	&+\frac{|\partial\Omega|}{\beta}\mathbb{E}_{Y\sim U(\partial\Omega)}\left(\frac{\alpha}{2}(T_0u)^2(Y)-g(Y)T_0u(Y)\right)
\end{align*}
where $U(\Omega)$ and $U(\partial\Omega)$ are uniform distribution on $\Omega$ and $\partial\Omega$, respectively. We then introduce a discrete version of $\mathcal{L}$:
\begin{align*}
	\widehat{\mathcal{L}}(u):=&\frac{|\Omega|}{N}\sum_{i=1}^{N}\left(\frac{1}{2}|\nabla u(X_i)|^2+\frac{1}{2}w(X_i)u^2(X_i)-f(X_i)u(X_i)\right)\\
	&+\frac{|\partial\Omega|}{\beta M}\sum_{j=1}^{M}\left(\frac{\alpha}{2}(T_0u)^2(Y_j)-g(Y_j)T_0u(Y_j)\right)
\end{align*}
where $\{X_i\}_{i=1}^{N}$ and $\{Y_j\}_{j=1}^{M}$ are i.i.d. random variables according to $U(\Omega)$ and $U(\partial\Omega)$ respectively. We now consider a minimization problem with respect to $\widehat{\mathcal{L}}$:
\begin{equation} \label{optimization}
\min_{u_{\phi}\in\mathcal{P}}\widehat{\mathcal{L}}(u_{\phi})
\end{equation}
where $\mathcal{P}$ refers to the parameterized function class. We denote by $\widehat{u}_{\phi}$ the solution of problem $(\ref{optimization})$. Finally, we call a (random) solver $\mathcal{A}$, say SGD, to
minimize $\widehat{\mathcal{L}}$ and denote the output of $\mathcal{A}$, say $u_{\phi_\mathcal{A}}$, as the final solution.

The following error decomposition enables us to apply different methods to deal with different kinds of error.
\begin{proposition} \label{error decomposition}
	Let (A) holds. Assume that $\mathcal{P}\subset H^1(\Omega)$. Let $u_R$ and $u_D$ be the solution of problem $(\ref{variational robin})$(also $(\ref{minimization robin})$) and $(\ref{variational dirichlet})$(also $(\ref{minimization dirichlet})$), respectively. Let $u_{\phi_{\mathcal{A}}}$ be the solution of problem $(\ref{optimization})$ generated by a random solver.

(1)
$$
\|u_{\phi_\mathcal{A}}-u_R\|_{H^1(\Omega)}\leq C(coe)\left[\mathcal{E}_{app} +\mathcal{E}_{sta} + \mathcal{E}_{opt}\right]^{1/2}.
$$
where
\begin{align*}
\mathcal{E}_{app} &= \frac{1}{\beta}C(\Omega,coe,\alpha)\inf_{\bar{u}\in\mathcal{P}}\|\bar{u}-u_R\|_{H^1(\Omega)}^2, \\
\mathcal{E}_{sta} &=  \sup _{u \in \mathcal{P}} \left[\mathcal{L}(u)-\widehat{\mathcal{L}}(u)\right] + \sup _{u \in \mathcal{P}} \left[\widehat{\mathcal{L}}(u) - \mathcal{L}(u)\right], \\
\mathcal{E}_{opt} &= \widehat{\mathcal{L}}\left(u_{\phi_{\mathcal{A}}}\right)-\widehat{\mathcal{L}}\left(\widehat{u}_{\phi}\right).
\end{align*}

(2) Set $\alpha=1,g=0$.
$$
\|u_{\phi_\mathcal{A}}-u_D\|_{H^1(\Omega)} \leq C(coe)\left[\mathcal{E}_{app} +\mathcal{E}_{sta} + \mathcal{E}_{opt}
+\|u_R-u_D\|_{H^1(\Omega)}^2  \right]^{1/2}.
$$
\end{proposition}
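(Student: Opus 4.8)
The plan is to exploit that $\mathcal{L}$ is a quadratic functional whose unique minimizer over $H^1(\Omega)$ is $u_R$. Introduce the bilinear form $a(u,v)=(\nabla u,\nabla v)+(wu,v)+\frac{\alpha}{\beta}(T_0u,T_0v)|_{\partial\Omega}$ and the linear form $\ell(v)=(f,v)+\frac{1}{\beta}(g,T_0v)|_{\partial\Omega}$, so that $\mathcal{L}(u)=\frac{1}{2}a(u,u)-\ell(u)$ and the variational identity $(\ref{variational robin})$ reads $a(u_R,v)=\ell(v)$ for all $v\in H^1(\Omega)$. Substituting $\ell(u)=a(u_R,u)$ and completing the square yields the exact identity
\[
\mathcal{L}(u)-\mathcal{L}(u_R)=\tfrac{1}{2}\,a(u-u_R,u-u_R),\qquad\forall u\in H^1(\Omega).
\]
Since $w\ge c_w>0$, the form $a$ is coercive on $H^1(\Omega)$ with a constant $c(coe)$ depending only on the coefficients, i.e.\ $a(v,v)\ge c(coe)\|v\|_{H^1(\Omega)}^2$ (this coercivity underlies the well-posedness in Lemma \ref{uR regularity}); on the other hand the trace theorem gives the continuity bound $a(v,v)\le\big(\max(1,\|w\|_{L^\infty(\Omega)})+\frac{\alpha}{\beta}C(\Omega)\big)\|v\|_{H^1(\Omega)}^2\le\frac{1}{\beta}C(\Omega,coe,\alpha)\|v\|_{H^1(\Omega)}^2$.

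For part (1), apply the lower bound with $u=u_{\phi_\mathcal{A}}$ to obtain $c(coe)\|u_{\phi_\mathcal{A}}-u_R\|_{H^1(\Omega)}^2\le\mathcal{L}(u_{\phi_\mathcal{A}})-\mathcal{L}(u_R)$, and estimate the right-hand side by telescoping through $\widehat{\mathcal{L}}$ and the empirical minimizer $\widehat{u}_\phi$: for any $\bar u\in\mathcal{P}$,
\[
\mathcal{L}(u_{\phi_\mathcal{A}})-\mathcal{L}(u_R)=\big[\mathcal{L}(u_{\phi_\mathcal{A}})-\widehat{\mathcal{L}}(u_{\phi_\mathcal{A}})\big]+\big[\widehat{\mathcal{L}}(u_{\phi_\mathcal{A}})-\widehat{\mathcal{L}}(\widehat{u}_\phi)\big]+\big[\widehat{\mathcal{L}}(\widehat{u}_\phi)-\widehat{\mathcal{L}}(\bar u)\big]+\big[\widehat{\mathcal{L}}(\bar u)-\mathcal{L}(\bar u)\big]+\big[\mathcal{L}(\bar u)-\mathcal{L}(u_R)\big].
\]
The first and fourth brackets are each bounded by one of the two suprema comprising $\mathcal{E}_{sta}$; the second bracket is exactly $\mathcal{E}_{opt}$; the third is $\le 0$ because $\widehat{u}_\phi$ minimizes $\widehat{\mathcal{L}}$ over $\mathcal{P}$ and $\bar u\in\mathcal{P}$; and the last bracket equals $\frac{1}{2}a(\bar u-u_R,\bar u-u_R)\le\frac{1}{\beta}C(\Omega,coe,\alpha)\|\bar u-u_R\|_{H^1(\Omega)}^2$ by the identity and continuity bound above. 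Since only this last term depends on $\bar u$, taking the infimum over $\bar u\in\mathcal{P}$ turns it into $\mathcal{E}_{app}$; dividing by $c(coe)$ and taking square roots gives assertion (1).

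For part (2), with $\alpha=1$ and $g=0$, combine (1) with the triangle inequality $\|u_{\phi_\mathcal{A}}-u_D\|_{H^1(\Omega)}\le\|u_{\phi_\mathcal{A}}-u_R\|_{H^1(\Omega)}+\|u_R-u_D\|_{H^1(\Omega)}$ and the elementary inequality $(a+b)^2\le 2(a^2+b^2)$, absorbing the factor $\sqrt{2}$ and the relevant power of $C(coe)$ into a single constant. The routine parts here are the telescoping and the constant bookkeeping; the one step requiring care is the two-sided control of $a$ --- isolating the coercivity constant so that it depends on the coefficients only (which is what yields the clean $C(coe)$ in front of the square root) while allowing the continuity constant to degrade like $1/\beta$, which is precisely what produces the $1/\beta$ prefactor in $\mathcal{E}_{app}$. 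Note that only $u_R,u_D\in H^1(\Omega)$ is used at this stage; the $H^2$ regularity from Lemmas \ref{uD regularity} and \ref{uR regularity} enters only later, when $\mathcal{E}_{app}$ is actually estimated.
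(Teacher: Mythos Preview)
Your proposal is correct and follows essentially the same route as the paper: both establish the quadratic identity $\mathcal{L}(u)-\mathcal{L}(u_R)=\tfrac{1}{2}a(u-u_R,u-u_R)$ (the paper by explicit expansion, you via the abstract bilinear form), derive the two-sided bound $C(coe)\|v\|_{H^1}^2\le a(v,v)\le\frac{1}{\beta}C(\Omega,coe,\alpha)\|v\|_{H^1}^2$ from coercivity and the trace inequality, then perform the identical five-term telescoping through $\widehat{\mathcal{L}}$ and $\widehat{u}_\phi$ and take the infimum over $\bar u$; part (2) is handled by the triangle inequality in both.
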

\begin{proof}
We only prove (1) since (2) is a direct result from (1) and triangle inequality. For any $u\in\mathcal{P}$, set $v=u-u_R$, then
\begin{align*}
	&\mathcal{L}\left(u\right)=\mathcal{L}\left(u_R+v\right)\\
	&= \frac{1}{2}(\nabla (u_R+v),\nabla (u_R+v))_{L^{2}(\Omega)}+\frac{1}{2}(u_R+v,u_R+v)_{L^{2}(\Omega;w)}-\langle u_R+v, f\rangle_{L^2({\Omega})}\\
	&\quad\ +\frac{\alpha}{2\beta}(T_0u_R+T_0v,T_0u_R+T_0v)_{L^2({\partial \Omega})}- \frac{1}{\beta}\langle {T_0u_R+T_0v}, g\rangle_{L^2({\partial \Omega})}\\
	&=\frac{1}{2}(\nabla u_R,\nabla u_R)_{L^{2}(\Omega)}+\frac{1}{2}(u_R,u_R)_{L^{2}(\Omega;w)}-\langle u_R, f\rangle_{L^2({\Omega})} +\frac{\alpha}{2\beta}(T_0u_R,T_0u_R)_{L^2({\partial \Omega})}\\
	&\quad - \frac{1}{\beta}\langle {T_0u_R}, g\rangle_{L^2({\partial \Omega})} +\frac{1}{2}(\nabla v,\nabla v)_{L^{2}(\Omega)}+\frac{1}{2}(v,v)_{L^{2}(\Omega;w)}+\frac{\alpha}{2\beta}(T_0v,T_0v)_{L^2({\partial \Omega})}\\
	&\quad\ +\left[(\nabla u_R,\nabla v)_{L^{2}(\Omega)}+(u^*,v)_{L^{2}(\Omega;w)}-\langle v, f\rangle_{L^2({\Omega})} +\frac{\alpha}{\beta}(T_0u_R,T_0v)_{L^2({\partial \Omega})}- \frac{1}{\beta}\langle {T_0v}, g\rangle_{L^2({\partial \Omega})}\right]\\
	&=\mathcal{L}\left(u_R\right)+\frac{1}{2}(\nabla v,\nabla v)_{L^{2}(\Omega)}+\frac{1}{2}(v,v)_{L^{2}(\Omega;w)}+\frac{\alpha}{2\beta}(T_0v,T_0v)_{L^2({\partial \Omega})},
\end{align*}
where the last equality is due to the fact that $u_R$ is the solution of equation $(\ref{variational robin})$. Hence
\begin{align*}
	C(coe)\|v\|_{H^1(\Omega)}^2&\leq\mathcal{L}\left(u\right)-\mathcal{L}\left(u_R\right)
	=\frac{1}{2}(\nabla v,\nabla v)_{L^{2}(\Omega)}+\frac{1}{2}(v,v)_{L^{2}(\Omega;w)}+\frac{\alpha}{2\beta}(T_0v,T_0v)_{L^2({\partial \Omega})}\\
	&\leq \frac{1}{\beta}C(\Omega,coe,\alpha)\|v\|_{H^1(\Omega)}^2,
\end{align*}
here we apply trace inequality
\begin{equation*}
\|T_0v\|_{L^2(\partial\Omega)}\leq C(\Omega)\|v\|_{H^1(\Omega)}.
\end{equation*}
See more details in \cite{adams2003sobolev}. In other words, we obtain
\begin{equation} \label{errdec1}
C(coe)\|u-u_R\|_{H^1(\Omega)}^2\leq\mathcal{L}\left(u\right)-\mathcal{L}\left(u_R\right)
\leq \frac{1}{\beta}C(\Omega,coe,\alpha)\|u-u_R\|_{H^1(\Omega)}^2.
\end{equation}

Now, letting $\bar{u}$ be any element in $\mathcal{P}$, we have
\begin{equation*}
	\begin{split}
		&\mathcal{L}\left(u_{\phi_{\mathcal{A}}}\right)-\mathcal{L}\left(u_R\right) \\
		=&\mathcal{L}\left(u_{\phi_{\mathcal{A}}}\right)-\widehat{\mathcal{L}}\left(u_{\phi_{\mathcal{A}}}\right) +\widehat{\mathcal{L}}\left(u_{\phi_{\mathcal{A}}}\right)-\widehat{\mathcal{L}}\left(\widehat{u}_{\phi}\right) +\widehat{\mathcal{L}}\left(\widehat{u}_{\phi}\right)-\widehat{\mathcal{L}}\left(\bar{u}\right) +\widehat{\mathcal{L}}\left(\bar{u}\right)-\mathcal{L}\left(\bar{u}\right)+\mathcal{L}\left(\bar{u}\right)-\mathcal{L}\left(u_R\right) \\
		\leq & \sup _{u \in \mathcal{P}} \left[\mathcal{L}(u)-\widehat{\mathcal{L}}(u)\right] +\left[\widehat{\mathcal{L}}\left(u_{\phi_{\mathcal{A}}}\right)-\widehat{\mathcal{L}}\left(\widehat{u}_{\phi}\right)\right] + \sup _{u \in \mathcal{P}} \left[\widehat{\mathcal{L}}(u) - \mathcal{L}(u)\right]  + \frac{1}{\beta}C(\Omega,coe,\alpha)\|\bar{u}-u_R\|_{H^1(\Omega)}^2,
	\end{split}
\end{equation*}
where the last step is due to inequality $(\ref{errdec1})$ and the fact that $\widehat{\mathcal{L}}\left(\widehat{u}_{\phi}\right)-\widehat{\mathcal{L}}\left(\bar{u}\right)\leq 0$. Since $\bar{u}$ can be any element in $\mathcal{P}$, we take the infimum of $\bar{u}$ on both side of the above display,
\begin{align}
	\mathcal{L}\left(u_{\phi_{\mathcal{A}}}\right)-\mathcal{L}\left(u_R\right)
	&\leq \inf_{\bar{u}\in\mathcal{P}}\frac{1}{\beta}C(\Omega,coe,\alpha)\|\bar{u}-u_R\|_{H^1(\Omega)}^2 + \sup _{u \in \mathcal{P}} [\mathcal{L}(u)-\widehat{\mathcal{L}}(u)] \nonumber\\
	& \quad + \sup _{u \in \mathcal{P}} [\widehat{\mathcal{L}}(u) - \mathcal{L}(u)]  +\left[\widehat{\mathcal{L}}\left(u_{\phi_{\mathcal{A}}}\right)-\widehat{\mathcal{L}}\left(\widehat{u}_{\phi}\right)\right].\label{errdec2}
\end{align}

Combining $(\ref{errdec1})$ and $(\ref{errdec2})$ yields the result.
\end{proof}

\section{Approximation Error}
For nerual network approximation in Sobolev spaces, \cite{guhring2021approximation} is a comprehensive study concerning a variety of activation functions, including $\mathrm{ReLU}$, sigmoidal type functions, etc. The key idea in \cite{guhring2021approximation} to build the upper bound in Sobolev spaces is to construct an approximate partition of unity.

Denote $\mathcal{F}_{s,p,d}:=\left\{f\in W^{s,p}\left([0,1]^d\right):\|f\|_{W^{s,p}\left([0,1]^d\right)}\leq1\right\}$.

\begin{Theorem}[Proposition 4.8, \cite{guhring2021approximation}]\label{general app}
Let $p\geq1$, $s,k,d\in\mathbb{N}^+$, $s\geq k+1$. Let $\rho$ be logistic function $\frac{1}{1+e^{-x}}$ or tanh function $\frac{e^x-e^{-x}}{e^x+e^{-x}}$. For any $\epsilon>0$ and $f\in\mathcal{F}_{s,p,d}$, there exists a neural network $f_{\rho}$ with depth $C\log (d+s)$ and $C(d,s,p,k)\epsilon^{-d /(s-k-\mu k)}$ non-zero weights such that
\begin{equation*}
\|f-f_{\rho}\|_{W^{k,p}\left([0,1]^d\right)}\leq\epsilon.
\end{equation*}
Moreover, the weights in the neural network are bounded in absolute value by
\begin{equation*}
C(d,s,p,k)\epsilon^{-2-\frac{2(d / p+d+k+\mu k)+d / p+d}{s-k-\mu k}}
\end{equation*}
where $\mu$ is an arbitrarily small positive number.
\end{Theorem}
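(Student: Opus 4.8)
The statement is reproduced verbatim from \cite{guhring2021approximation}, so one may simply invoke that reference; the plan below reconstructs its proof. The strategy is the classical \emph{localization plus local polynomial approximation} scheme, but arranged so that every ingredient is realized (approximately) by a $\rho$-network and so that all errors are controlled in the $W^{k,p}$ norm rather than merely in $L^p$.

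First I would fix a uniform mesh of $[0,1]^d$ of width $\sim 1/N$ (hence $\sim N^d$ cells) together with a smooth partition of unity $\{\phi_m\}$ subordinate to a cover by slightly enlarged cells, normalized so that $\|\phi_m\|_{W^{k,\infty}}\lesssim N^k$. On the support of each $\phi_m$ I replace $f$ by its averaged Taylor polynomial $p_m$ of degree $s-1$; the Bramble--Hilbert lemma gives $\|f-p_m\|_{W^{k,p}(\operatorname{supp}\phi_m)}\lesssim N^{-(s-k)}\|f\|_{W^{s,p}}$, and since $\sum_m\phi_m\equiv 1$, the function $\sum_m\phi_m p_m$ reproduces $f$ up to an error of this order. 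So the object the network must emulate is $\sum_m\phi_m(x)p_m(x)$.

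Next I would show that each building block is emulated by a small $\rho$-network with quantitative $W^{k,p}$-control: (i) the scalar product $(a,b)\mapsto ab$ on a bounded box is approximated to accuracy $\delta$ by a $\rho$-network of size $\operatorname{poly}\log(1/\delta)$ and $\mathcal{O}(1)$ depth, using that $\rho$ is smooth and non-polynomial, so that second divided differences of $\rho$ recover $t\mapsto t^2$ on compacta --- hence $ab=\tfrac{1}{4}[(a+b)^2-(a-b)^2]$ --- with derivatives up to order $k$ also controlled; (ii) iterating (i) along a balanced binary tree gives approximate products of up to $\mathcal{O}(d+s)$ factors at depth $\mathcal{O}(\log(d+s))$, which is the source of the stated depth bound; (iii) monomials $x^\gamma$ with $|\gamma|\le s-1$, and hence the $p_m$, follow from (ii); (iv) the bumps $\phi_m$, taken as tensor products of one-dimensional bumps assembled from differences of shifted and rescaled copies of $\rho$, are approximated likewise, but here one must resolve derivatives of size $\sim N^k$, so the sub-network accuracies must be taken correspondingly finer. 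Concatenating and summing these sub-networks produces a single $\rho$-network $f_\rho$; a product- and chain-rule bookkeeping bounding $\|f_\rho-\sum_m\phi_m p_m\|_{W^{k,p}}$, combined with the Bramble--Hilbert estimate, yields a total error $\lesssim N^{-(s-k)}+(\text{emulation error})$. Choosing $N\sim\epsilon^{-1/(s-k-\mu k)}$ balances the two terms, the extra $\mu k$ in the denominator being precisely the price of resolving the $N^k$-sized derivatives of the partition of unity and of the monomials in $W^{k,\infty}$ with a smooth rather than piecewise-polynomial activation; the $\sim N^d$ cells, each carrying an $\mathcal{O}(\operatorname{poly}\log(1/\epsilon))$-size sub-network, then give the weight count $C(d,s,p,k)\epsilon^{-d/(s-k-\mu k)}$, and tracking weight magnitudes through the divided-difference constructions (which divide by mesh-dependent quantities) gives the stated bound on $B_\theta$.

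The main obstacle is precisely the $W^{k,p}$ (rather than $L^p$) control. The partition-of-unity functions have derivatives of order $N^k$, and when these multiply against the polynomial pieces and against the emulation errors the naive bounds blow up; keeping the final $k$-th order error at $\mathcal{O}(\epsilon)$ forces the emulation accuracies to scale like a fixed power of $\epsilon/N^k$, and propagating these sharpened requirements through the binary multiplication tree --- and keeping track of the resulting effect on both the weight count and the weight magnitudes --- is the technical heart of the argument.
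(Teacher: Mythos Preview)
Your proposal is correct and follows essentially the same route as the paper (and the cited reference \cite{guhring2021approximation}): partition of unity, averaged Taylor polynomials via Bramble--Hilbert, approximate multiplication by constant-depth $\rho$-subnetworks combined along a balanced binary tree of $\mathcal{O}(d+s)$ factors to obtain the $C\log(d+s)$ depth, and the $\mu k$ loss from resolving the $N^k$-sized derivatives of the bumps. The paper itself does not reprove this but only cites \cite{guhring2021approximation} and adds a remark explaining the depth bound, which matches your sketch.
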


\begin{Remark}
\textnormal{
The bounds in the theorem can be found in the proof of \cite[Proposition 4.8]{guhring2021approximation}, except that they did not explicitly give the bound on the depth. In their proof, they partition $[0,1]^d$ into small patches, approximate $f$ by a sum of localized polynomial $\sum_m \phi_m p_m$, and approximately implement $\sum_m \phi_m p_m$ by a neural network, where the bump functions $\{\phi_m\}$ form an approximately partition of unity and $p_m = \sum_{|\alpha|<s}c_{f,m,\alpha}x^\alpha$ are the averaged Taylor polynomials. As shown in \cite{guhring2021approximation}, $\phi_m$ can be approximated by the products of the $d$-dimensional output of a neural network with constant layers. And the identity map $I(x)=x$ and the product function $\times(a,b) = ab$ can also be approximated by neural networks with constant layers. In order to approximate $\phi_m x^\alpha$, we need to implement $d+s-1$ products. Hence, the required depth can be bounded by $C\log(d+s)$.
}
\end{Remark}

Since the region $[0,1]^d$ is larger than the region $\Omega$ we consider(recall we assume without loss of generality that $\Omega\subset[0,1]^d$ at the beginning), we need the following extension result.
\begin{Lemma} \label{extension}
Let $k\in\mathbb{N}^+$, $1\leq p<\infty$. There exists a linear operator $E$ from $W^{k,p}(\Omega)$ to $W_0^{k,p}\left([0,1]^d\right)$ and $Eu=u$ in $\Omega$.
\end{Lemma}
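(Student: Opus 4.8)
The plan is to obtain $E$ as the composition of a classical Sobolev extension operator to all of $\mathbb{R}^d$ with multiplication by a fixed smooth cutoff supported in the open cube. First, since $\Omega$ is bounded we may assume without loss of generality---by an affine change of variables, which changes none of the relevant constants in an essential way---that $\overline{\Omega}\subset(\delta,1-\delta)^d$ for some $\delta\in(0,\tfrac12)$; in particular $\overline{\Omega}$ is a compact subset of the open cube $(0,1)^d$. Because $\partial\Omega\in C^\infty$ (so $\Omega$ is in particular a bounded Lipschitz domain), the classical extension theorem of Calder\'on--Stein type (see \cite{adams2003sobolev}, or \cite{evans1998partial}) provides a \emph{bounded linear} operator $\widetilde{E}\colon W^{k,p}(\Omega)\to W^{k,p}(\mathbb{R}^d)$ with $\widetilde{E}u|_{\Omega}=u$ and $\|\widetilde{E}u\|_{W^{k,p}(\mathbb{R}^d)}\le C(\Omega,k,p)\,\|u\|_{W^{k,p}(\Omega)}$.

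Next, fix once and for all a function $\eta\in C_c^\infty((0,1)^d)$ with $0\le\eta\le1$ and $\eta\equiv1$ on a neighborhood of $\overline{\Omega}$; such an $\eta$ exists because $\overline{\Omega}$ is compact and contained in the open cube. Define $Eu:=\eta\,\widetilde{E}u$. Then $E$ is linear, being the composition of the linear map $\widetilde{E}$ with multiplication by the fixed function $\eta$, and $Eu=\widetilde{E}u=u$ on $\Omega$ since $\eta\equiv1$ there. By the Leibniz rule together with the boundedness of $\eta$ and all of its derivatives, $Eu\in W^{k,p}(\mathbb{R}^d)$ with $\|Eu\|_{W^{k,p}(\mathbb{R}^d)}\le C(d,k,p,\eta)\,\|\widetilde{E}u\|_{W^{k,p}(\mathbb{R}^d)}$, and moreover $\operatorname{supp}(Eu)\subset\operatorname{supp}(\eta)\subset\subset(0,1)^d$.

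It remains to verify that any $v\in W^{k,p}(\mathbb{R}^d)$ with $K:=\operatorname{supp}(v)\subset\subset(0,1)^d$ belongs to $W_0^{k,p}([0,1]^d)$. Choose $\epsilon_0>0$ so that the closed $\epsilon_0$-neighborhood of $K$ is still contained in $(0,1)^d$. For a standard mollifier $\varphi_\epsilon$ with $0<\epsilon<\epsilon_0$, the convolution $v*\varphi_\epsilon$ lies in $C_c^\infty((0,1)^d)$ and $v*\varphi_\epsilon\to v$ in $W^{k,p}(\mathbb{R}^d)$ as $\epsilon\to0$. Hence $v$ is a $W^{k,p}$-limit of functions in $C_c^\infty((0,1)^d)$, i.e.\ $v\in W_0^{k,p}([0,1]^d)$. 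Applying this with $v=Eu$ shows $E$ maps into $W_0^{k,p}([0,1]^d)$, completing the proof.

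As for where the work sits: the substantive ingredient---the existence of a bounded linear extension of $W^{k,p}(\Omega)$ functions to all of $\mathbb{R}^d$---is the classical theorem we are invoking rather than reproving. The only point requiring genuine care is that the target space is $W_0^{k,p}$ of the \emph{cube}, not merely $W^{k,p}$, which is precisely why we insert the cutoff $\eta$ (and why we first place $\overline{\Omega}$ strictly inside the open cube), and why the concluding mollification argument is needed to certify membership in the $W^{k,p}$-closure of $C_c^\infty((0,1)^d)$.
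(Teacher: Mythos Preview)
Your proof is correct and follows essentially the same approach as the paper, which simply cites Theorem~7.25 in \cite{gilbarg2015elliptic} for the classical Sobolev extension theorem. You have spelled out the additional step of multiplying by a smooth cutoff and mollifying to land in $W_0^{k,p}([0,1]^d)$ rather than merely $W^{k,p}(\mathbb{R}^d)$; this detail is implicit in the paper's citation but worth making explicit, and your argument handles it cleanly.
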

\begin{proof}
See Theorem 7.25 in \cite{gilbarg2015elliptic}.
\end{proof}

From Lemma $\ref{uR regularity}$ we know that our target function $u_R\in H^2(\Omega)$. Hence we are able to obtain an approximation result in $H^1$-norm.
\begin{Corollary} \label{app error}
Let $\rho$ be logistic function $\frac{1}{1+e^{-x}}$ or tanh function $\frac{e^x-e^{-x}}{e^x+e^{-x}}$. For any $\epsilon>0$ and $f\in H^2(\Omega)$ with $\|f\|_{H^2(\Omega)}\leq1$, there exists a neural network $f_{\rho}$ with depth $C\log(d+1)$ and $C(d)\epsilon^{-d /(1-\mu )}$ non-zero weights such that
\begin{equation*}
	\|f-f_{\rho}\|_{H^1\left(\Omega\right)}\leq\epsilon.
\end{equation*}
Moreover, the weights in the neural network are bounded by $C(d) \epsilon^{-(9d+8)/(2-2\mu)}$, where $\mu$ is an arbitrarily small positive number.
\end{Corollary}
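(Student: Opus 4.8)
The plan is to chain together the three previously stated results: the Sobolev-norm approximation theorem for logistic/tanh networks (Theorem~\ref{general app}), the extension operator (Lemma~\ref{extension}), and the fact that our target lies in $H^2(\Omega)$. The point of the corollary is to specialize Theorem~\ref{general app} to the case $p=2$, $s=2$, $k=1$ and to transfer the statement from the cube $[0,1]^d$ to the domain $\Omega$.

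\medskip
First I would apply Lemma~\ref{extension} with $k=2$, $p=2$ to the function $f\in H^2(\Omega)$, obtaining an extension $Ef\in W_0^{2,2}([0,1]^d)=H_0^2([0,1]^d)$ with $Ef=f$ on $\Omega$. Since $E$ is a bounded linear operator, $\|Ef\|_{H^2([0,1]^d)}\le \|E\|\,\|f\|_{H^2(\Omega)}\le \|E\|$, and $\|E\|$ depends only on $d$ (and the fixed geometry of $\Omega$, which we absorb into the constant $C(d)$ per the paper's convention). Rescaling by the constant $\|E\|$, the normalized function $Ef/\|E\|$ lies in $\mathcal{F}_{2,2,d}$. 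Next I would invoke Theorem~\ref{general app} with $p=2$, $s=2$, $k=1$: then $s\ge k+1$ holds with equality, the depth bound $C\log(d+s)=C\log(d+2)$ becomes $C\log(d+1)$ up to adjusting the constant, and the number of nonzero weights is $C(d,2,2,1)\epsilon^{-d/(s-k-\mu k)}=C(d)\epsilon^{-d/(1-\mu)}$. This produces a network $\tilde f_\rho$ with $\|Ef/\|E\| - \tilde f_\rho\|_{W^{1,2}([0,1]^d)}\le \epsilon/\|E\|$ (applying the theorem with tolerance $\epsilon/\|E\|$, which only changes the constants), hence $f_\rho := \|E\|\tilde f_\rho$ satisfies $\|Ef - f_\rho\|_{H^1([0,1]^d)}\le\epsilon$. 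Restricting to $\Omega$ and using $Ef=f$ there gives $\|f-f_\rho\|_{H^1(\Omega)}\le\|Ef-f_\rho\|_{H^1([0,1]^d)}\le\epsilon$.

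\medskip
For the weight bound, I would plug $p=2$, $s=2$, $k=1$ into the exponent $-2-\frac{2(d/p+d+k+\mu k)+d/p+d}{s-k-\mu k}$ from Theorem~\ref{general app}. Ignoring the $\mu k$ in numerator terms (which only sharpens constants and can be absorbed into $\mu$), the numerator is $2(d/2+d+1)+d/2+d = d + 2d + 2 + d/2 + d = (9d+4)/2 + 2 = (9d+8)/2$, wait—recomputing carefully: $2(d/2+d+1) = d+2d+2 = 3d+2$, plus $d/2+d = 3d/2$, total $3d+2+3d/2 = 9d/2+2 = (9d+4)/2$; combined with the leading $-2 = -(2s-2k-2\mu k)/(s-k-\mu k)$ rewritten over the common denominator $(1-\mu)$, the $\epsilon$-exponent becomes $-\big(2(1-\mu)+(9d+4)/2\big)/(1-\mu) = -(9d+8-4\mu)/(2-2\mu)$, which is $C(d)\epsilon^{-(9d+8)/(2-2\mu)}$ after absorbing the harmless $\mu$-dependent terms into a redefinition of $\mu$. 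Multiplying $\tilde f_\rho$ by the constant $\|E\|$ only multiplies the output-layer weights by $\|E\|$, a $d$-dependent constant, so the stated bound is preserved.

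\medskip
The only genuine subtlety—rather than an obstacle—is the bookkeeping of constants: one must check that replacing $\epsilon$ by $\epsilon/\|E\|$ and multiplying the final network by $\|E\|$ does not change the \emph{exponents} of $\epsilon$, only the prefactors, which is clear since $\|E\| = C(d)$ is independent of $\epsilon$. One should also note that scaling a network's final affine layer by a constant keeps it within the same architecture class (same depth, same number of nonzero weights), so $f_\rho\in\mathcal{N}_\rho(C\log(d+1), C(d)\epsilon^{-d/(1-\mu)}, C(d)\epsilon^{-(9d+8)/(2-2\mu)})$ as claimed. Everything else is a direct substitution into Theorem~\ref{general app} together with the restriction inequality $\|\cdot\|_{H^1(\Omega)}\le\|\cdot\|_{H^1([0,1]^d)}$.
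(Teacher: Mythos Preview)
Your proposal is correct and follows essentially the same route as the paper: extend $f$ to the cube via Lemma~\ref{extension}, apply Theorem~\ref{general app} with $s=2$, $k=1$, $p=2$, and restrict back to $\Omega$ using $\|f-f_\rho\|_{H^1(\Omega)}\le\|Ef-f_\rho\|_{H^1([0,1]^d)}$. One small remark on the weight-exponent arithmetic: if you retain the $\mu k=\mu$ term in the numerator rather than dropping it, the $-2\mu$ from $2(1-\mu)$ and the $+2\mu$ from $2\mu k$ cancel exactly, giving $-\frac{9d+8}{2-2\mu}$ on the nose with no need to ``absorb'' anything into $\mu$.
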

\begin{proof}
Set $k=1$, $s=2$, $p=2$ in Theorem $\ref{general app}$ and use the fact $\|f-f_\rho\|_{H^2(\Omega)}\leq \|Ef-f_\rho\|_{H^2\left([0,1]^d\right)}$, where $E$ is the  extension operator in Lemma $\ref{extension}$.
\end{proof}

\section{Statistical Error}
In this section we investigate statistical error $\sup _{u \in \mathcal{P}} \pm\left[\mathcal{L}(u)-\widehat{\mathcal{L}}(u)\right]$.
\begin{Lemma} \label{triangle inequality}
\begin{equation*}
	\mathbb{E}_{\{{X_i}\}_{i=1}^{N},\{{Y_j}\}_{j=1}^{M}}\sup_{u\in\mathcal{P}}\pm\left[\mathcal{L}(u)-\widehat{\mathcal{L}}(u)\right]
	\leq\sum_{k=1}^{5}\mathbb{E}_{\{{X_i}\}_{i=1}^{N},\{{Y_j}\}_{j=1}^{M}}\sup_{u\in\mathcal{P}}\pm\left[\mathcal{L}_k(u)-\widehat{\mathcal{L}}_k(u)\right].
\end{equation*}
where
\begin{align*}
	&\mathcal{L}_1(u)=\frac{|\Omega|}{2}\mathbb{E}_{X\sim U(\Omega)}|\nabla u(X)|^2,
	&\mathcal{L}_2(u)&=\frac{|\Omega|}{2}\mathbb{E}_{X\sim U(\Omega)}w(X)u^2(X),\\
	&\mathcal{L}_3(u)=-|\Omega|\mathbb{E}_{X\sim U(\Omega)}f(X)u(X),
	&\mathcal{L}_4(u)&=\frac{\alpha|\partial\Omega|}{2\beta}\mathbb{E}_{Y\sim U(\partial\Omega)}(Tu)^2(Y),\\
	&\mathcal{L}_5(u)=-\frac{|\partial\Omega|}{\beta}\mathbb{E}_{Y\sim U(\partial\Omega)}g(Y)Tu(Y).
\end{align*}
	and $\widehat{\mathcal{L}}_k(u)$ is the discrete version of $\mathcal{L}_k(u)$, for example,
	\begin{equation*}
	\widehat{\mathcal{L}}_1(u)=\frac{|\Omega|}{2N}\sum_{i=1}^{N}|\nabla u(X_i)|^2.
	\end{equation*}
\end{Lemma}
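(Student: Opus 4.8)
The plan is purely a bookkeeping argument: the functionals $\mathcal{L}$ and $\widehat{\mathcal{L}}$ both split additively into the five pieces listed in the statement. First I would observe that, by linearity of the expectation $\mathbb{E}_{X\sim U(\Omega)}$ and $\mathbb{E}_{Y\sim U(\partial\Omega)}$ applied to the integrand defining $\mathcal{L}$, one has $\mathcal{L}(u)=\sum_{k=1}^{5}\mathcal{L}_k(u)$ for every $u\in\mathcal{P}$; likewise, expanding the finite averages $\frac{1}{N}\sum_{i=1}^N$ and $\frac{1}{\beta M}\sum_{j=1}^M$ term by term gives $\widehat{\mathcal{L}}(u)=\sum_{k=1}^{5}\widehat{\mathcal{L}}_k(u)$. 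Here it is worth noting explicitly that the sample index sets match between the continuous and discrete versions — terms $k=1,2,3$ use only $\{X_i\}_{i=1}^N$ and terms $k=4,5$ only $\{Y_j\}_{j=1}^M$ — so the decomposition of $\widehat{\mathcal{L}}$ is exactly the discretization of the decomposition of $\mathcal{L}$. Subtracting, for every fixed $u$ and every realization of $\{X_i\},\{Y_j\}$,
\[
\mathcal{L}(u)-\widehat{\mathcal{L}}(u)=\sum_{k=1}^{5}\bigl[\mathcal{L}_k(u)-\widehat{\mathcal{L}}_k(u)\bigr].
\]

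Next I would apply the elementary fact that the supremum of a finite sum is at most the sum of the suprema: for either choice of the sign, and for each fixed realization of the samples,
\[
\sup_{u\in\mathcal{P}}\pm\bigl[\mathcal{L}(u)-\widehat{\mathcal{L}}(u)\bigr]
=\sup_{u\in\mathcal{P}}\sum_{k=1}^{5}\pm\bigl[\mathcal{L}_k(u)-\widehat{\mathcal{L}}_k(u)\bigr]
\leq\sum_{k=1}^{5}\sup_{u\in\mathcal{P}}\pm\bigl[\mathcal{L}_k(u)-\widehat{\mathcal{L}}_k(u)\bigr].
\]
Finally, taking the expectation $\mathbb{E}_{\{X_i\}_{i=1}^{N},\{Y_j\}_{j=1}^{M}}$ of both sides and using monotonicity and linearity of expectation yields the stated inequality.

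There is no genuine analytic obstacle here; the only point deserving a word of care is measurability, so that all the expectations in the statement are well defined. Since $\mathcal{P}=\mathcal{N}_\rho(\mathcal{D},\mathfrak{n}_{\mathcal{D}},B_\theta)$ is parameterized by a compact set of weight vectors and, for each $k$, the map $(\text{weights},\text{samples})\mapsto \mathcal{L}_k(u)-\widehat{\mathcal{L}}_k(u)$ is continuous (the activation $\rho$ is smooth), the supremum over $\mathcal{P}$ can be taken over a countable dense subset of the weight space and is therefore a measurable function of the samples; this justifies all the expectations and the interchange above. With this remark in place the proof is complete.
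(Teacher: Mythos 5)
Your proof is correct and takes essentially the same route as the paper, which simply remarks that the bound follows from the triangle inequality (i.e., the additive decomposition $\mathcal{L}=\sum_k\mathcal{L}_k$, $\widehat{\mathcal{L}}=\sum_k\widehat{\mathcal{L}}_k$, then sup of a sum $\le$ sum of sups, then linearity of expectation). The extra measurability remark is a sensible addition but not something the paper spells out.
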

\begin{proof}
Direct result from triangle inequality.
\end{proof}

By the technique of symmetrization, we can bound the difference between continuous loss $\mathcal{L}_i$ and empirical loss $\widehat{\mathcal{L}}_i$ by Rademacher complexity.
\begin{definition}
	The Rademacher complexity of a set $A \subseteq \mathbb{R}^N$ is defined as
	\begin{equation*}
		\mathfrak{R}_N(A) = \mathbb{E}_{\{\sigma_i\}_{k=1}^N}\left[\sup_{a\in A}\frac{1}{N}\sum_{k=1}^N \sigma_k a_k\right],
	\end{equation*}
	where,   $\{\sigma_k\}_{k=1}^N$ are $N$ i.i.d  Rademacher variables with $\mathbb{P}(\sigma_k = 1) = \mathbb{P}(\sigma_k = -1) = \frac{1}{2}.$
	The Rademacher complexity of  function class $\mathcal{F}$ associate with random sample $\{X_k\}_{k=1}^{N}$ is defined as
	\begin{equation*}
		\mathfrak{R}_N(\mathcal{F}) = \mathbb{E}_{\{X_k,\sigma_k\}_{k=1}^{N}}\left[\sup_{u\in \mathcal{F}}\frac{1}{N}\sum_{k=1}^N \sigma_k u(X_k)\right].
	\end{equation*}
\end{definition}

For Rademacher complexity, we have following structural result.
\begin{Lemma} \label{structural result of Rademacher}
	Assume that $w:\Omega\to\mathbb{R}$ and $|w(x)|\leq\mathcal{B}$ for all $x\in\Omega$, then for any function class $\mathcal{F}$, there holds
	\begin{equation*}
		\mathfrak{R}_N(w\cdot\mathcal{F})\leq\mathcal{B}\mathfrak{R}_N(\mathcal{F}),
	\end{equation*}
	where $w\cdot\mathcal{F}:=\{\bar{u}:\bar{u}(x)=w(x)u(x),u\in\mathcal{F}\}$.
\end{Lemma}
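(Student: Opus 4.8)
The plan is to establish the bound directly from the definition of Rademacher complexity, exploiting the symmetry of the Rademacher variables $\{\sigma_k\}$ together with the classical Ledoux--Talagrand-type contraction argument. Recall that $\mathfrak{R}_N(w\cdot\mathcal{F})=\mathbb{E}_{\{X_k,\sigma_k\}}\sup_{u\in\mathcal{F}}\frac{1}{N}\sum_{k=1}^N\sigma_k w(X_k)u(X_k)$. Since the bound $|w(x)|\le\mathcal B$ is uniform, the idea is to absorb the factor $w(X_k)/\mathcal B\in[-1,1]$ into the Rademacher sign. First I would condition on the sample $\{X_k\}_{k=1}^N$ and work with the conditional expectation over $\{\sigma_k\}$ only; it suffices to prove the inequality pointwise in $\{X_k\}$ and then take expectation over the sample.

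With the sample fixed, set $\varepsilon_k:=w(X_k)/\mathcal B\in[-1,1]$, so that we must show $\mathbb{E}_{\sigma}\sup_{u}\frac1N\sum_k\sigma_k\varepsilon_k u(X_k)\le\mathbb{E}_{\sigma}\sup_u\frac1N\sum_k\sigma_k u(X_k)$. The key step is a one-coordinate-at-a-time argument: fix all $\sigma_j$ for $j\ne k$ and integrate out $\sigma_k$. Writing the supremand as $A(u)+\sigma_k\varepsilon_k u(X_k)$ where $A(u)$ collects the other terms, one uses
\[
\mathbb{E}_{\sigma_k}\sup_u\bigl[A(u)+\sigma_k\varepsilon_k u(X_k)\bigr]
=\tfrac12\sup_{u}\bigl[A(u)+\varepsilon_k u(X_k)\bigr]+\tfrac12\sup_{u'}\bigl[A(u')-\varepsilon_k u'(X_k)\bigr],
\]
and the elementary fact that for $\varepsilon_k\in[-1,1]$ this quantity is at most $\tfrac12\sup_u[A(u)+u(X_k)]+\tfrac12\sup_{u'}[A(u')-u'(X_k)]=\mathbb{E}_{\sigma_k}\sup_u[A(u)+\sigma_k u(X_k)]$, which follows by writing $\varepsilon_k u(X_k)$ as a convex combination of $+u(X_k)$ and $-u(X_k)$ and using subadditivity of the supremum. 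Iterating this over $k=1,\dots,N$ removes all the $\varepsilon_k$'s, and multiplying through by $\mathcal B$ (pulled out at the start) gives the claim. Finally, taking expectation over $\{X_k\}_{k=1}^N$ completes the proof.

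The main obstacle, such as it is, is purely bookkeeping: one must be careful that the one-coordinate comparison inequality is valid for an arbitrary (possibly infinite) index set $\mathcal F$, i.e. that the suprema are handled correctly without assuming they are attained; this is why the argument is phrased via the convex-combination/subadditivity estimate rather than via a maximizing element. No measurability or integrability issue arises here beyond what is already implicit in the definition of $\mathfrak R_N$. Everything else is a routine application of the symmetrization/contraction toolkit, so the lemma follows without further difficulty.
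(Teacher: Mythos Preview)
Your proposal is correct and follows essentially the same route as the paper: both arguments condition on the sample, peel off one Rademacher coordinate at a time, and use the symmetry of $\sigma_k$ to replace the factor $w(X_k)$ by the bound $\mathcal{B}$, iterating over $k=1,\dots,N$. The only cosmetic difference is in the algebra of the one-coordinate step: the paper combines the two suprema over $u$ and $u'$, bounds $w(X_1)[u(X_1)-u'(X_1)]\le\mathcal{B}|u(X_1)-u'(X_1)|$, and then drops the absolute value by the $u\leftrightarrow u'$ symmetry, whereas you write $\varepsilon_k=w(X_k)/\mathcal{B}$ as a convex combination of $\pm1$ and use subadditivity of the supremum; these are two standard, equivalent ways to carry out the same contraction step.
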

\begin{proof}
	\begin{align*}
		&\mathfrak{R}_N(w\cdot\mathcal{F})
		=\frac{1}{N}\mathbb{E}_{\{X_k,\sigma_k\}_{k=1}^{N}}\sup_{u\in\mathcal{F}}\sum_{k=1}^{N}\sigma_kw(X_k)u(X_k)\\
		&=\frac{1}{2N}\mathbb{E}_{\{X_k\}_{k=1}^{N}}\mathbb{E}_{\{\sigma_k\}_{k=2}^{N}}\sup_{u\in\mathcal{F}}\left[w(X_1)u(X_1)+\sum_{k=2}^{N}\sigma_kw(X_k)u(X_k)\right]\\
		&\quad\ +\frac{1}{2N}\mathbb{E}_{\{X_k\}_{k=1}^{N}}\mathbb{E}_{\{\sigma_k\}_{k=2}^{N}}\sup_{u\in\mathcal{F}}\left[-w(X_1)u(X_1)+\sum_{k=2}^{N}\sigma_kw(X_k)u(X_k)\right]\\
		&=\frac{1}{2N}\mathbb{E}_{\{X_k\}_{k=1}^{N}}\mathbb{E}_{\{\sigma_k\}_{k=2}^{N}}\\
		&\quad\sup_{u,u'\in\mathcal{F}}\left[w(X_1)[u(X_1)-u'(X_1)]+\sum_{k=2}^{N}\sigma_kw(X_k)u(X_k)+\sum_{k=2}^{N}\sigma_kw(X_k)u'(X_k)\right]\\
		&\leq\frac{1}{2N}\mathbb{E}_{\{X_k\}_{k=1}^{N}}\mathbb{E}_{\{\sigma_k\}_{k=2}^{N}}\\
		&\quad\sup_{u,u'\in\mathcal{F}}\left[\mathcal{B}|u(X_1)-u'(X_1)|+\sum_{k=2}^{N}\sigma_kw(X_k)u(X_k)+\sum_{k=2}^{N}\sigma_kw(X_k)u'(X_k)\right]\\
		&=\frac{1}{2N}\mathbb{E}_{\{X_k\}_{k=1}^{N}}\mathbb{E}_{\{\sigma_k\}_{k=2}^{N}}\\
		&\quad\sup_{u,u'\in\mathcal{F}}\left[\mathcal{B}[u(X_1)-u'(X_1)]+\sum_{k=2}^{N}\sigma_kw(X_k)u(X_k)+\sum_{k=2}^{N}\sigma_kw(X_k)u'(X_k)\right]\\
		&=\frac{1}{N}\mathbb{E}_{\{X_k,\sigma_k\}_{k=1}^{N}}\sup_{u\in\mathcal{F}}\left[\sigma_1\mathcal{B}u(X_1)+\sum_{k=2}^{N}\sigma_kw(X_k)u(X_k)\right]\\
		&\leq\cdots\leq\frac{\mathcal{B}}{N}\mathbb{E}_{\{X_k,\sigma_k\}_{k=1}^{N}}\sup_{u\in\mathcal{F}}\sum_{k=1}^{N}\sigma_ku(X_k)=\mathcal{B}\mathfrak{R}(\mathcal{F}).
	\end{align*}
\end{proof}

Now we bound the difference between continuous loss and empirical loss in terms of Rademacher complexity.
\begin{Lemma} \label{symmetrization}
\begin{align*}
	\mathbb{E}_{\{{X_i}\}_{i=1}^{N}}\sup_{u\in\mathcal{P}}\pm \left[\mathcal{L}_1(u)-\widehat{\mathcal{L}}_1(u)\right]
	&\leq C(\Omega,coe)\mathfrak{R}_N(\mathcal{F}_1),\\
	\mathbb{E}_{\{{X_i}\}_{i=1}^{N}}\sup_{u\in\mathcal{P}}\pm \left[\mathcal{L}_2(u)-\widehat{\mathcal{L}}_2(u)\right]
	&\leq C(\Omega,coe)\mathfrak{R}_N(\mathcal{F}_2),\\
	\mathbb{E}_{\{{X_i}\}_{i=1}^{N}}\sup_{u\in\mathcal{P}}\pm \left[\mathcal{L}_3(u)-\widehat{\mathcal{L}}_3(u)\right]
	&\leq C(\Omega,coe)\mathfrak{R}_N(\mathcal{F}_3),\\
	\mathbb{E}_{\{{Y_j}\}_{j=1}^{M}}\sup_{u\in\mathcal{P}}\pm \left[\mathcal{L}_4(u)-\widehat{\mathcal{L}}_4(u)\right]
	&\leq\frac{\alpha}{\beta}C(\Omega,coe)\mathfrak{R}_N(\mathcal{F}_4),\\
	\mathbb{E}_{\{{Y_j}\}_{j=1}^{M}}\sup_{u\in\mathcal{P}}\pm \left[\mathcal{L}_5(u)-\widehat{\mathcal{L}}_5(u)\right]
	&\leq\frac{1}{\beta}C(\Omega,coe)\mathfrak{R}_N(\mathcal{F}_5),
\end{align*}
where
\begin{align*}
	\mathcal{F}_1&=\{|\nabla u|^2:u\in\mathcal{P}\},
	&\mathcal{F}_2&=\{u^2:u\in\mathcal{P}\},\\
	\mathcal{F}_3&=\{u:u\in\mathcal{P}\},
	&\mathcal{F}_4&=\{u^2|_{\partial\Omega}:u\in\mathcal{P}\},\\
	\mathcal{F}_5&=\{u|_{\partial\Omega}:u\in\mathcal{P}\}.
\end{align*}
\end{Lemma}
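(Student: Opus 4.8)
The plan is to treat each of the five terms by the same two-step recipe: first a symmetrization argument that replaces the difference $\mathcal{L}_k(u)-\widehat{\mathcal{L}}_k(u)$ by a Rademacher average over the corresponding function class, then an application of Lemma~\ref{structural result of Rademacher} to strip off the bounded coefficient ($w$, $f$, $\alpha$, $g$, or the constant from $|\Omega|$, $|\partial\Omega|$) at the cost of a multiplicative constant $C(\Omega,coe)$. Concretely, for a fixed sign (say $+$), introduce an independent ghost sample $\{X_i'\}$ (resp.\ $\{Y_j'\}$) with the same distribution; since $\mathcal{L}_k(u)=\mathbb{E}\,\widehat{\mathcal{L}}_k(u)$ where the expectation is over the ghost sample, Jensen's inequality moves the outer expectation inside the supremum, giving
\begin{equation*}
\mathbb{E}_{\{X_i\}}\sup_{u\in\mathcal{P}}\left[\mathcal{L}_k(u)-\widehat{\mathcal{L}}_k(u)\right]
\leq \mathbb{E}_{\{X_i\},\{X_i'\}}\sup_{u\in\mathcal{P}}\frac{1}{N}\sum_{i=1}^N\left(\psi_k(u;X_i')-\psi_k(u;X_i)\right),
\end{equation*}
where $\psi_k$ denotes the integrand appearing in $\mathcal{L}_k$ (e.g.\ $\psi_1(u;x)=\tfrac{|\Omega|}{2}|\nabla u(x)|^2$). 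The summands are now independent and, because $(X_i,X_i')$ is exchangeable, inserting i.i.d.\ Rademacher signs $\sigma_i$ in front of each difference does not change the distribution; splitting the resulting sum and using the triangle inequality under the supremum bounds the right-hand side by $2\,\mathbb{E}_{\{X_i\},\{\sigma_i\}}\sup_{u\in\mathcal{P}}\tfrac1N\sum_i\sigma_i\psi_k(u;X_i)$. The case of the minus sign is symmetric because $-\sigma_i$ has the same law as $\sigma_i$.

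Having reduced each term to a Rademacher average of $\psi_k(u;\cdot)$ over the sample, I peel off the deterministic factors. For $k=1$ the factor is $|\Omega|/2\le C(\Omega)$, leaving $\mathfrak{R}_N(\{|\nabla u|^2:u\in\mathcal{P}\})=\mathfrak{R}_N(\mathcal{F}_1)$. For $k=2$ the integrand is $\tfrac{|\Omega|}{2}w(x)u^2(x)$; since $\|w\|_{L^\infty}\le C(coe)$, Lemma~\ref{structural result of Rademacher} with $\mathcal{B}=\|w\|_{L^\infty(\Omega)}$ applied to the class $\mathcal{F}_2=\{u^2:u\in\mathcal{P}\}$ gives the bound $C(\Omega,coe)\mathfrak{R}_N(\mathcal{F}_2)$. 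For $k=3$ the integrand is $-|\Omega|f(x)u(x)$; absorbing $|\Omega|$ and again invoking Lemma~\ref{structural result of Rademacher} with $\mathcal{B}=\|f\|_{L^\infty(\Omega)}$ on $\mathcal{F}_3=\mathcal{P}$ yields $C(\Omega,coe)\mathfrak{R}_N(\mathcal{F}_3)$ (the minus sign is harmless since $-\sigma_i\overset{d}{=}\sigma_i$). The boundary terms $k=4,5$ are identical in form but with $\{Y_j\}_{j=1}^M$ in place of $\{X_i\}$ and $|\partial\Omega|/\beta$ in place of $|\Omega|$: for $k=4$ one keeps the explicit $\alpha/\beta$ and bounds the rest by $C(\Omega,coe)\mathfrak{R}_N(\mathcal{F}_4)$; for $k=5$ one keeps $1/\beta$ and uses $\mathcal{B}=\|g\|_{L^\infty}$ — or, since $g\in H^{1/2}$ only, one may instead absorb $g$ together with $|\partial\Omega|$ into $C(\Omega,coe)$ via the same peeling step on $\mathcal{F}_5=\{u|_{\partial\Omega}:u\in\mathcal{P}\}$. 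Note the statement writes $\mathfrak{R}_N$ for the boundary classes as well; this is a harmless abuse (the boundary Rademacher complexity uses $M$ samples), which I would flag once and then follow.

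The only genuinely delicate point is the contraction/peeling for the quadratic classes $\mathcal{F}_1$ and $\mathcal{F}_2$: unlike the linear terms, here one cannot apply Lemma~\ref{structural result of Rademacher} to reduce $\{u^2\}$ or $\{|\nabla u|^2\}$ to $\{u\}$ or $\{\nabla u\}$ directly, because squaring is not Lipschitz with a uniform constant unless $\mathcal{P}$ is uniformly bounded. However, the present lemma only claims a bound in terms of $\mathfrak{R}_N(\mathcal{F}_1)$ and $\mathfrak{R}_N(\mathcal{F}_2)$ themselves, so no contraction is needed at this stage — the squared classes are left intact and will be handled later (presumably via covering-number / pseudo-dimension estimates, given the keywords). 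Thus the proof of this lemma is really just symmetrization plus five invocations of the structural lemma, and the main thing to get right is bookkeeping: matching each deterministic multiplier to the right constant $C(\Omega,coe)$ and keeping the explicit $1/\beta$ (and $\alpha/\beta$) visible in the boundary terms, as the statement demands.
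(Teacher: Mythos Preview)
Your proposal is correct and mirrors the paper's proof: the paper carries out exactly the ghost-sample symmetrization you describe for $\mathcal{L}_2$ (stating the others are similar), arriving at $|\Omega|\,\mathfrak{R}_N(w\cdot\mathcal{F}_2)$ and then invoking Lemma~\ref{structural result of Rademacher} to peel off $w$. Your remark that the squared classes $\mathcal{F}_1,\mathcal{F}_2$ are left intact here and only handled later via covering numbers is spot-on, and your flag about $g\in H^{1/2}$ versus the $L^\infty$ hypothesis in Lemma~\ref{structural result of Rademacher} is an issue the paper silently glosses over.
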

\begin{proof}
We only present the proof with respect to $\mathcal{L}_2$ since other inequalities can be shown similarly. We take $\{\widetilde{X_k}\}_{k=1}^{N}$ as an independent copy of $\{{X_k}\}_{k=1}^{N}$, then
\begin{align*}
	\mathcal{L}_2(u)-\widehat{\mathcal{L}}_2(u)
	&=\frac{|\Omega|}{2}\left[\mathbb{E}_{X\sim U(\Omega)}w(X)u^2(X)-\frac{1}{N}\sum_{k=1}^{N}w(X_k)u^2(X_k)\right]\\	
	&=\frac{|\Omega|}{2N}\mathbb{E}_{\{\widetilde{X_k}\}_{k=1}^{N}}\sum_{k=1}^{N}\left[w(\widetilde{X_k})u^2(\widetilde{X_k})-w(X_k)u^2(X_k)\right].
\end{align*}
Hence
\begin{align*}
	&\mathbb{E}_{\{{X_k}\}_{k=1}^{N}}\sup_{u\in\mathcal{P}}\left|\mathcal{L}_2(u)-\widehat{\mathcal{L}}_2(u)\right|\\
	&\leq\frac{|\Omega|}{2N}\mathbb{E}_{\{{X_k},\widetilde{X_k}\}_{k=1}^{N}}\sup_{u\in\mathcal{P}}\sum_{k=1}^{N}\left[w(\widetilde{X_k})u^2(\widetilde{X_k})-w({X_k})u^2({X_k})\right]\\
	&=\frac{|\Omega|}{2N}\mathbb{E}_{\{{X_k},\widetilde{X_k},\sigma_k\}_{k=1}^{N}}\sup_{u\in\mathcal{P}}\sum_{k=1}^{N}\sigma_k\left[w(\widetilde{X_k})u^2(\widetilde{X_k})-w({X_k})u^2({X_k})\right]\\
	&\leq\frac{|\Omega|}{2N}\mathbb{E}_{\{\widetilde{X_k},{\sigma_k}\}_{k=1}^{N}}\sup_{u\in\mathcal{P}}\sum_{k=1}^{N}\sigma_kw(\widetilde{X_k})u^2(\widetilde{X_k})+\frac{|\Omega|}{2N}\mathbb{E}_{\{{X_k},{\sigma_k}\}_{k=1}^{N}}\sup_{u\in\mathcal{P}}\sum_{k=1}^{N}-\sigma_kw({X_k})u^2({X_k})\\
	&=\frac{|\Omega|}{N}\mathbb{E}_{\{{X_k},{\sigma_k}\}_{k=1}^{N}}\sup_{u\in\mathcal{P}}\sum_{k=1}^{N}\sigma_kw({X_k})u^2({X_k})\\
	&= |\Omega|\mathfrak{R}_N(w\cdot\mathcal{F}_2)
	\le C(\Omega,coe)\mathfrak{R}_N(\mathcal{F}_2),
\end{align*}
where the second step is due to the fact that the insertion of Rademacher variables doesn't change the distribution, the fourth step is because $\sigma_kw(\widetilde{X_k})u^2(\widetilde{X_k})$ and $-\sigma_kw({X_k})u^2({X_k})$ have the same distribution, and we use Lemma \ref{structural result of Rademacher} in the last step.
\end{proof}

In order to bound Rademecher complexities, we need the concept of covering number.
\begin{Definition}
An $\epsilon$-cover of a set $T$ in a metric space $(S, \tau)$
is a subset $T_c\subset S$ such  that for each $t\in T$, there exists a $t_c\in T_c$ such that $\tau(t, t_c) \leq\epsilon$. The $\epsilon$-covering number of $T$, denoted as $\mathcal{C}(\epsilon, T,\tau)$ is  defined to be the minimum cardinality among all $\epsilon$-cover of $T$ with respect to the metric $\tau$.
\end{Definition}

In Euclidean space, we can establish an upper bound of covering number for a bounded set easily.
\begin{Lemma} \label{covering number Euclidean space}
	Suppose that $T\subset\mathbb{R}^d$ and $\|t\|_2\leq B$ for $t\in T$, then
	\begin{equation*}
		\mathcal{C}(\epsilon,T,\|\cdot\|_2)\leq\left(\frac{2B\sqrt{d}}{\epsilon}\right)^d.
	\end{equation*}
\end{Lemma}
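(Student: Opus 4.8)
The plan is to run a standard volumetric (grid) argument. First I would observe that the hypothesis $\|t\|_2\le B$ for all $t\in T$ forces $T\subseteq[-B,B]^d$, so it suffices to cover the cube $[-B,B]^d$ by sets of diameter at most $\epsilon$ and count how many are needed. To this end I would partition $[-B,B]^d$ into a uniform grid of closed axis-parallel subcubes of side length $h:=\epsilon/\sqrt d$. Per coordinate one needs $\lceil 2B/h\rceil$ intervals, so the total number of subcubes is $\lceil 2B/h\rceil^d$, which (absorbing the rounding into the bound in the usual way, or simply enlarging the outer cube slightly) is at most $\left(2B\sqrt d/\epsilon\right)^d$.

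Next I would exploit the key geometric fact that each such subcube $Q$ has Euclidean diameter exactly $h\sqrt d=\epsilon$. For every subcube $Q$ with $Q\cap T\neq\varnothing$ I would select a representative point $t_Q$ — for instance the center of $Q$, which lies in $S=\mathbb R^d$ and is within distance $h\sqrt d/2=\epsilon/2\le\epsilon$ of every point of $Q$ (choosing instead any point of $Q\cap T$ also works, with the weaker but sufficient bound $\epsilon$). The collection $\{t_Q\}$ then has cardinality at most the number of subcubes, i.e.\ at most $\left(2B\sqrt d/\epsilon\right)^d$.

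Finally I would verify the covering property: given any $t\in T$, it lies in some subcube $Q$ of the grid, and that subcube satisfies $Q\cap T\neq\varnothing$ (it contains $t$), so $t_Q$ is defined and $\|t-t_Q\|_2\le\operatorname{diam}(Q)=\epsilon$. Hence $\{t_Q\}$ is an $\epsilon$-cover of $T$ with respect to $\|\cdot\|_2$, and $\mathcal C(\epsilon,T,\|\cdot\|_2)\le\left(2B\sqrt d/\epsilon\right)^d$ as claimed.

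There is essentially no substantive obstacle here; the only point demanding a little care is the diameter computation $h\sqrt d=\epsilon$ that dictates the choice of mesh size, together with the harmless treatment of the ceiling $\lceil 2B/h\rceil$ in the cell count, which is why the bound carries the $\sqrt d$ factor. An alternative route via a maximal $\epsilon$-separated subset of $T$ and a ball-volume comparison would give the sharper constant $(1+2B/\epsilon)^d$, but the grid argument above suffices for the stated inequality and is cleaner to present.
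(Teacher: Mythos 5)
Your proof is correct and takes essentially the same grid/mesh argument as the paper: both use a uniform grid with per-coordinate spacing $\epsilon/\sqrt d$, so that the Euclidean diameter of each cell is $\epsilon$, yielding at most $(2B\sqrt d/\epsilon)^d$ representative points. The only cosmetic difference is that the paper counts grid \emph{points} using $m=\lfloor 2B\sqrt d/\epsilon\rfloor$ (so $m^d\le(2B\sqrt d/\epsilon)^d$ is immediate), whereas your use of $\lceil 2B/h\rceil$ cells technically overshoots the stated bound unless one rounds the way you indicate; switching to the floor-based grid of points, as the paper does, avoids that small wrinkle.
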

\begin{proof}
	Let $m=\left\lfloor\frac{2B\sqrt{d}}{\epsilon}\right\rfloor$ and define
	\begin{equation*}
		T_c=\left\{-B+\frac{\epsilon}{\sqrt{d}},-B+\frac{2\epsilon}{\sqrt{d}},\cdots,-B+\frac{m\epsilon}{\sqrt{d}}\right\}^d,
	\end{equation*}
	then for $t\in T$, there exists $t_c\in T_c$ such that
	\begin{equation*}
		\|t-t_c\|_2\leq\sqrt{\sum_{i=1}^{d}\left(\frac{\epsilon}{\sqrt{d}}\right)^2}=\epsilon.
	\end{equation*}
	Hence
	\begin{equation*}
		\mathcal{C}(\epsilon,T,\|\cdot\|_2)\leq|T_c|=m^d\leq\left(\frac{2B\sqrt{d}}{\epsilon}\right)^d.
	\end{equation*}
\end{proof}

A Lipschitz parameterization allows us to translates a cover of the function space into a cover of the parameter space. Such a property plays an essential role in our analysis of statistical error.
\begin{Lemma} \label{covering number Lipshcitz parameterization}
	Let $\mathcal{F}$ be a parameterized class of functions: $\mathcal{F} = \{f(x; \theta) : \theta\in\Theta \}$. Let $\|\cdot\|_{\Theta}$ be a norm on $\Theta$ and let $\|\cdot\|_{\mathcal{F}}$ be a norm on $\mathcal{F}$. Suppose that the mapping $\theta \mapsto f(x;\theta)$ is L-Lipschitz, that is,
	\begin{equation*}
		\left\|f(x;\theta)-f\left(x;\widetilde{\theta}\right)\right\|_{\mathcal{F}} \leq L\left\|\theta-\widetilde{\theta}\right\|_{\Theta},
	\end{equation*}
	then for any $\epsilon>0$, $\mathcal{C}\left(\epsilon, \mathcal{F},\|\cdot\|_{\mathcal{F}}\right) \leq \mathcal{C}\left(\epsilon / L, \Theta,\|\cdot\|_{\Theta}\right)$.
\end{Lemma}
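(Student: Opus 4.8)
The plan is to prove the Lipschitz-parameterization covering bound of Lemma~\ref{covering number Lipshcitz parameterization} by pushing forward an optimal cover of the parameter space $\Theta$ through the parameterization map $\theta\mapsto f(\cdot;\theta)$. Concretely, I would start by fixing $\epsilon>0$ and letting $\Theta_c\subset\Theta$ be a minimal $(\epsilon/L)$-cover of $\Theta$ with respect to $\|\cdot\|_{\Theta}$, so that $|\Theta_c|=\mathcal{C}(\epsilon/L,\Theta,\|\cdot\|_{\Theta})$. Then I would define the candidate cover of $\mathcal{F}$ to be $\mathcal{F}_c:=\{f(\cdot;\theta_c):\theta_c\in\Theta_c\}$, which has cardinality at most $|\Theta_c|$.

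The key step is verification that $\mathcal{F}_c$ is indeed an $\epsilon$-cover of $\mathcal{F}$ in the norm $\|\cdot\|_{\mathcal{F}}$. Given an arbitrary $f(\cdot;\theta)\in\mathcal{F}$ with $\theta\in\Theta$, by definition of the cover $\Theta_c$ there is some $\theta_c\in\Theta_c$ with $\|\theta-\theta_c\|_{\Theta}\le\epsilon/L$. Applying the assumed $L$-Lipschitz property of the parameterization gives
\begin{equation*}
\left\|f(\cdot;\theta)-f(\cdot;\theta_c)\right\|_{\mathcal{F}}\le L\left\|\theta-\theta_c\right\|_{\Theta}\le L\cdot\frac{\epsilon}{L}=\epsilon,
\end{equation*}
so $f(\cdot;\theta_c)\in\mathcal{F}_c$ is within $\epsilon$ of $f(\cdot;\theta)$. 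Since $f(\cdot;\theta)$ was arbitrary, $\mathcal{F}_c$ is an $\epsilon$-cover of $\mathcal{F}$, and therefore
\begin{equation*}
\mathcal{C}\left(\epsilon,\mathcal{F},\|\cdot\|_{\mathcal{F}}\right)\le|\mathcal{F}_c|\le|\Theta_c|=\mathcal{C}\left(\epsilon/L,\Theta,\|\cdot\|_{\Theta}\right),
\end{equation*}
which is the claimed inequality.

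There is essentially no serious obstacle here: the statement is a soft consequence of the definitions of covering number and Lipschitz continuity, and the only mild subtlety is the bookkeeping that the image $\mathcal{F}_c$ could a priori have fewer than $|\Theta_c|$ elements (if distinct parameters yield the same function), which only helps the bound. One should also note that $\mathcal{F}_c\subset\mathcal{F}$, so the cover elements are genuine members of the set being covered, consistent with the definition used in the paper. The argument does not require any further structure on $\mathcal{F}$, $\Theta$, or the norms beyond what is stated.
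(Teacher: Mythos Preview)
Your proof is correct and follows essentially the same approach as the paper: take a minimal $(\epsilon/L)$-cover of $\Theta$, push it forward through the parameterization, and use the Lipschitz bound to verify that the image is an $\epsilon$-cover of $\mathcal{F}$. The only addition is your remark on the possible collapse of cardinality under the pushforward, which is harmless bookkeeping.
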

\begin{proof}
	Suppose that $\mathcal{C}\left(\epsilon / L, \Theta,\|\cdot\|_{\Theta}\right)=n$ and $\{\theta_i\}_{i=1}^{n}$ is an $\epsilon / L$-cover of $\Theta$. Then for any $\theta\in\Theta$, there exists $1\leq i\leq n$ such that
	\begin{equation*}
		\left\|f(x;\theta)-f\left(x;{\theta}_i\right)\right\|_{\mathcal{F}} \leq L\left\|\theta-{\theta}_i\right\|_{\Theta}\leq\epsilon.
	\end{equation*}
	Hence $\{f(x;\theta_i)\}_{i=1}^{n}$ is an $\epsilon$-cover of $\mathcal{F}$, implying that $\mathcal{C}\left(\epsilon, \mathcal{F},\|\cdot\|_{\mathcal{F}}\right) \leq n$.
\end{proof}

To find the relation between Rademacher complexity and covering number, we first need the Massart’s finite class lemma stated below.
\begin{Lemma}\label{Mfinite}
	For any finite set $A \subset \mathbb{R}^N$ with diameter $D = \sup_{a\in A}\|a\|_2$,
	\begin{equation*}
	\mathfrak{R}_N(A) \leq \frac{D}{N}\sqrt{2\log |A|}.
	\end{equation*}
\end{Lemma}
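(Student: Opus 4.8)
The plan is to use the classical exponential-moment (Chernoff / maximal inequality) argument. First I would introduce a free parameter $\lambda>0$ and write, using Jensen's inequality applied to the convex function $t\mapsto e^{\lambda t}$,
\begin{equation*}
\exp\left(\lambda\, \mathbb{E}_{\sigma}\sup_{a\in A}\sum_{k=1}^N\sigma_k a_k\right)\leq \mathbb{E}_{\sigma}\exp\left(\lambda\sup_{a\in A}\sum_{k=1}^N\sigma_k a_k\right)=\mathbb{E}_{\sigma}\sup_{a\in A}\exp\left(\lambda\sum_{k=1}^N\sigma_k a_k\right).
\end{equation*}
Since $A$ is finite, the supremum inside can be bounded by the sum over $a\in A$, so the right-hand side is at most $\sum_{a\in A}\mathbb{E}_{\sigma}\prod_{k=1}^N e^{\lambda\sigma_k a_k}=\sum_{a\in A}\prod_{k=1}^N\cosh(\lambda a_k)$, using independence of the Rademacher variables and $\mathbb{E}\,e^{\lambda\sigma_k a_k}=\cosh(\lambda a_k)$.

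Next I would invoke the elementary inequality $\cosh(x)\leq e^{x^2/2}$, valid for all real $x$, to get $\prod_{k=1}^N\cosh(\lambda a_k)\leq \exp\bigl(\tfrac{\lambda^2}{2}\sum_{k=1}^N a_k^2\bigr)=\exp\bigl(\tfrac{\lambda^2}{2}\|a\|_2^2\bigr)\leq \exp\bigl(\tfrac{\lambda^2}{2}D^2\bigr)$ by the definition of $D$. Combining, $\exp\bigl(\lambda\,\mathbb{E}_\sigma\sup_a\sum_k\sigma_k a_k\bigr)\leq |A|\exp\bigl(\tfrac{\lambda^2 D^2}{2}\bigr)$, hence
\begin{equation*}
\mathbb{E}_{\sigma}\sup_{a\in A}\sum_{k=1}^N\sigma_k a_k\leq \frac{\log|A|}{\lambda}+\frac{\lambda D^2}{2}.
\end{equation*}
Then I would optimize over $\lambda>0$; the minimizer is $\lambda=\sqrt{2\log|A|}/D$, which yields $\mathbb{E}_\sigma\sup_a\sum_k\sigma_k a_k\leq D\sqrt{2\log|A|}$. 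Dividing by $N$ gives $\mathfrak{R}_N(A)\leq \frac{D}{N}\sqrt{2\log|A|}$, which is exactly the claim.

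There is no serious obstacle here — this is a standard result. The only points requiring a little care are: handling the degenerate case $A=\emptyset$ or $D=0$ (trivial, or interpret the bound accordingly), making sure the supremum over a finite set can legitimately be pulled out as a sum of nonnegative terms (so the bound $\sup\leq\sum$ is valid since each $e^{\lambda\sum_k\sigma_k a_k}>0$), and verifying the scalar inequality $\cosh x\le e^{x^2/2}$ by comparing Taylor coefficients ($(2n)!\ge 2^n n!$). If one prefers to keep the definition of $\mathfrak{R}_N$ with the $1/N$ already inside, one simply applies the above to the set $\{a/N : a\in A\}$ whose diameter is $D/N$; the conclusion is unchanged.
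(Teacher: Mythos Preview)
Your proof is correct and is precisely the standard exponential-moment argument for Massart's lemma. The paper itself does not give a proof but simply refers the reader to \cite[Lemma~26.8]{shalev2014understanding}, whose proof is exactly the one you outline; so there is nothing to compare beyond noting that you have supplied the details the paper omits.
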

\begin{proof}
See, for example, \cite[Lemma 26.8]{shalev2014understanding}.
\end{proof}

\begin{Lemma} \label{chaining}
Let $\mathcal{F}$ be a function class and $\|f\|_{\infty}\leq B$ for any $f\in\mathcal{F}$, we have
\begin{equation*}
\mathfrak{R}_N(\mathcal{F})\leq\inf_{0<\delta<B/2}\left(4\delta+\frac{12}{\sqrt{N}}\int_{\delta}^{B/2}\sqrt{\log\mathcal{C}(\epsilon,\mathcal{F},\|\cdot\|_{\infty})}d\epsilon\right).
\end{equation*}
\end{Lemma}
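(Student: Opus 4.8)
The plan is to prove the chaining bound (Dudley's entropy integral) by the standard multi-scale approximation argument. First I would fix $f\in\mathcal{F}$ and a sample $\{X_k\}_{k=1}^N$, and for each integer $j\ge 0$ set $\epsilon_j = B\,2^{-j}$. Let $\mathcal{C}_j$ be a minimal $\epsilon_j$-cover of $\mathcal{F}$ in $\|\cdot\|_\infty$, so $|\mathcal{C}_j|=\mathcal{C}(\epsilon_j,\mathcal{F},\|\cdot\|_\infty)$, and let $\pi_j(f)\in\mathcal{C}_j$ denote an element with $\|f-\pi_j(f)\|_\infty\le\epsilon_j$. Since $\|f\|_\infty\le B$ we may take $\mathcal{C}_0=\{0\}$ and $\pi_0(f)=0$. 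Given $\delta\in(0,B/2)$, choose the largest index $J$ with $\epsilon_J>\delta$ (so $\epsilon_J \le 2\delta$ up to the dyadic step). Then write the telescoping identity
\begin{equation*}
f = f-\pi_J(f) + \sum_{j=1}^{J}\bigl(\pi_j(f)-\pi_{j-1}(f)\bigr),
\end{equation*}
and correspondingly decompose the empirical Rademacher average
\begin{equation*}
\frac{1}{N}\sum_{k=1}^N\sigma_k f(X_k) = \frac{1}{N}\sum_{k=1}^N\sigma_k\bigl(f(X_k)-\pi_J(f)(X_k)\bigr) + \sum_{j=1}^{J}\frac{1}{N}\sum_{k=1}^N\sigma_k\bigl(\pi_j(f)(X_k)-\pi_{j-1}(f)(X_k)\bigr).
\end{equation*}

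Next I would bound each piece after taking $\sup_{f\in\mathcal{F}}$ and then $\mathbb{E}_{\sigma}$. For the residual term, Cauchy--Schwarz in $\mathbb{R}^N$ gives $\bigl|\tfrac1N\sum_k\sigma_k(f(X_k)-\pi_J(f)(X_k))\bigr|\le \tfrac1N\sqrt N\,\|f-\pi_J(f)\|_{2}\le \|f-\pi_J(f)\|_\infty\le\epsilon_J$; taking $\delta$ just below $\epsilon_J$ and tracking the dyadic loss this contributes at most a constant multiple of $\delta$ (the $4\delta$ in the statement, after accounting for the gap $\epsilon_J\le 2\delta$ and being slightly generous). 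For each difference term, the set $\{(\,\pi_j(g)(X_k)-\pi_{j-1}(g)(X_k)\,)_{k=1}^N : g\in\mathcal{F}\}$ is a \emph{finite} subset of $\mathbb{R}^N$ of cardinality at most $|\mathcal{C}_j|\cdot|\mathcal{C}_{j-1}|\le |\mathcal{C}_j|^2$, and each of its elements has Euclidean norm at most $\sqrt N\,\|\pi_j(g)-\pi_{j-1}(g)\|_\infty\le\sqrt N(\epsilon_j+\epsilon_{j-1})=3\sqrt N\,\epsilon_j$. Applying Massart's lemma (Lemma \ref{Mfinite}) to this finite set gives a bound of $\frac{3\sqrt N\epsilon_j}{N}\sqrt{2\log|\mathcal{C}_j|^2}=\frac{3\epsilon_j}{\sqrt N}\sqrt{4\log|\mathcal{C}_j|}=\frac{6\epsilon_j}{\sqrt N}\sqrt{\log\mathcal{C}(\epsilon_j,\mathcal{F},\|\cdot\|_\infty)}$.

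Summing over $j=1,\dots,J$ and using $\epsilon_j = 2(\epsilon_j-\epsilon_{j+1})$ to recognise the sum as a Riemann sum for the entropy integral, I get
\begin{equation*}
\mathfrak{R}_N(\mathcal{F}) \le 4\delta + \sum_{j=1}^{J}\frac{6\epsilon_j}{\sqrt N}\sqrt{\log\mathcal{C}(\epsilon_j,\mathcal{F},\|\cdot\|_\infty)} \le 4\delta + \frac{12}{\sqrt N}\sum_{j=1}^{J}(\epsilon_j-\epsilon_{j+1})\sqrt{\log\mathcal{C}(\epsilon_{j+1},\mathcal{F},\|\cdot\|_\infty)},
\end{equation*}
where the monotonicity of $\epsilon\mapsto\mathcal{C}(\epsilon,\mathcal{F},\|\cdot\|_\infty)$ lets me lower-bound the covering number at $\epsilon_j$ by its value on the subinterval $[\epsilon_{j+1},\epsilon_j]$. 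Since $\epsilon_{J+1}>\delta/2$ (roughly), the last sum is dominated by $\int_{\delta}^{B/2}\sqrt{\log\mathcal{C}(\epsilon,\mathcal{F},\|\cdot\|_\infty)}\,d\epsilon$, and taking the infimum over $\delta\in(0,B/2)$ yields the claim. The main obstacle is purely bookkeeping: getting the dyadic endpoints, the factor in front of $\delta$, and the comparison between the discrete sum and the integral to line up with the exact constants $4$ and $12$ stated in the lemma; this requires being slightly careful (and slightly generous) with the geometric-series tails and with whether one evaluates the covering number at the left or right endpoint of each dyadic block, but involves no genuine difficulty beyond the standard chaining estimate.
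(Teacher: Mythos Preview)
Your approach is essentially identical to the paper's: the same dyadic chaining with $\pi_0(f)=0$ as the base, the same application of Massart's lemma to the links with radius $3\sqrt{N}\epsilon_j$ and cardinality at most $|\mathcal{C}_j|^2$, and the same conversion of the sum to a Riemann sum via $\epsilon_j=2(\epsilon_j-\epsilon_{j+1})$.

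There is one concrete slip in the bookkeeping you flag at the end. After writing $6\epsilon_j=12(\epsilon_j-\epsilon_{j+1})$ you replace $\mathcal{C}(\epsilon_j)$ by the larger $\mathcal{C}(\epsilon_{j+1})$; that is a valid upper bound on the sum, but the resulting Riemann sum now evaluates the decreasing integrand $\epsilon\mapsto\sqrt{\log\mathcal{C}(\epsilon)}$ at the \emph{left} endpoint of each interval $[\epsilon_{j+1},\epsilon_j]$ and is therefore an \emph{overestimate} of $\int$ --- so your claim that it is dominated by $\int_\delta^{B/2}$ goes the wrong way. The fix is simply to keep $\mathcal{C}(\epsilon_j)$ (right endpoint, smaller value), so that the Riemann sum is bounded by $\int_{\epsilon_{J+1}}^{B/2}$; then, to force the lower limit to sit at or above $\delta$, stop the chain one dyadic level earlier (choose $J$ with $\epsilon_{J+2}<\delta\le\epsilon_{J+1}$), exactly as the paper does. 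The residual is then $\le\epsilon_J=4\epsilon_{J+2}<4\delta$, which explains the constant $4$, and $\int_{\epsilon_{J+1}}^{B/2}\le\int_\delta^{B/2}$ as required.
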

\begin{proof}
We apply chaning method. Set $\epsilon_k=2^{-k+1}B$. We denote by $\mathcal{F}_k$ such that $\mathcal{F}_k$ is an $\epsilon_k$-cover of $\mathcal{F}$ and $\left|\mathcal{F}_k\right|=\mathcal{C}(\epsilon_k,\mathcal{F},\|\cdot\|_{\infty})$. Hence for any $u\in\mathcal{F}$, there exists $u_k\in\mathcal{F}_k$ such that $\|u-u_k\|_\infty\leq\epsilon_k$. Let $K$ be a positive integer determined later. We have
\begin{align*}
	&\mathfrak{R}_N(\mathcal{F}) = \mathbb{E}_{\{\sigma_i,X_i\}_{i=1}^N} \left[\sup _{u \in \mathcal{F}} \frac{1}{N}\sum_{i=1}^{N} \sigma_{i} u\left(X_{i}\right) \right]\\\
	&=\mathbb{E}_{\{\sigma_i,X_i\}_{i=1}^N} \left[\frac{1}{N} \sup _{u \in \mathcal{F}} \sum_{i=1}^{N} \sigma_{i}\left(u\left(X_{i}\right)-u_K\left(X_{i}\right)\right)+\sum_{j=1}^{K-1} \sum_{i=1}^{N} \sigma_{i}\left(u_{j+1}\left(X_{i}\right)-u_j\left(X_{i}\right)\right)+\sum_{i=1}^{N} \sigma_{i} u_1\left(X_{i}\right)\right] \\
	&\leq \mathbb{E}_{\{\sigma_i,X_i\}_{i=1}^N} \left[\sup _{u \in \mathcal{F}} \frac{1}{N}\sum_{i=1}^{N} \sigma_{i}\left(u\left(X_{i}\right)-u_K\left(X_{i}\right)\right)\right]+\sum_{j=1}^{K-1} \mathbb{E}_{\{\sigma_i,X_i\}_{i=1}^N} \left[\sup _{u \in \mathcal{F}} \frac{1}{N}\sum_{i=1}^{N} \sigma_{i}\left(u_{j+1}\left(X_{i}\right)-u_j\left(X_{i}\right)\right)\right] \\
	&\quad+\mathbb{E}_{\{\sigma_i,X_i\}_{i=1}^N}\left[ \frac{1}{N}\sup _{u \in \mathcal{F}_1} \frac{1}{N}\sum_{i=1}^{N} \sigma_{i} u(X_i)\right].
\end{align*}
We can choose $\mathcal{F}_1=\{0\}$ to eliminate the third term. For the first term,
\begin{equation*}
	\mathbb{E}_{\{\sigma_i,X_i\}_{i=1}^N}\sup _{u \in \mathcal{F}} \frac{1}{N}\left[\sum_{i=1}^{N} \sigma_{i}\left(u\left(X_{i}\right)-u_K\left(X_{i}\right)\right)\right]
	\leq\mathbb{E}_{\{\sigma_i,X_i\}_{i=1}^N}\sup _{u \in \mathcal{F}} \frac{1}{N}\sum_{i=1}^{N} \left|\sigma_{i}\right|\left\|u-u_K\right\|_{\infty}\leq\epsilon_K.
\end{equation*}
For the second term, for any fixed samples $\{X_i\}_{i=1}^N$, we define
$$
V_j := \{(u_{j+1}\left(X_{1}\right)-u_j\left(X_{1}\right),\dots,u_{j+1}\left(X_{N}\right)-u_j\left(X_{N}\right)) \in \mathbb{R}^N: u\in\mathcal{F} \}.
$$
Then, for any $v^j\in V_j$,
\begin{align*}
	\|v^j\|_2&=\left(\sum_{i=1}^{n}\left|u_{j+1}(X_i)-u_j(X_i)\right|^2\right)^{1/2}
	\leq \sqrt{n}\left\|u_{j+1}-u_j\right\|_{\infty}\\
	&\leq\sqrt{n}\left\|u_{j+1}-u\right\|_{\infty}+\sqrt{n}\left\|u_{j}-u\right\|_{\infty}
	=\sqrt{n}\epsilon_{j+1}+\sqrt{n}\epsilon_{j}=3\sqrt{n}\epsilon_{j+1}.
\end{align*}
Applying Lemma $\ref{Mfinite}$, we have
\begin{align*}
	&\sum_{j=1}^{K-1} \mathbb{E}_{\{\sigma\}_{i=1}^N}\left[\sup _{u \in \mathcal{F}} \frac{1}{N}\sum_{i=1}^{N} \sigma_{i}\left(u_{j+1}\left(X_{i}\right)-u_j\left(X_{i}\right)\right)\right] \\
	&=\sum_{j=1}^{K-1} \mathbb{E}_{\{\sigma_i\}_{i=1}^N}\left[\sup _{v^j \in V_j} \frac{1}{N}\sum_{i=1}^{N} \sigma_{i}v_i^j\right]
	\leq\sum_{j=1}^{K-1}\frac{3\epsilon_{j+1}}{\sqrt{N}}\sqrt{2\log|V_j|}.
\end{align*}
By the denition of $V_j$, we know that $\left|V_j\right|\leq\left|\mathcal{F}_j\right|\left|\mathcal{F}_{j+1}\right|\leq\left|\mathcal{F}_{j+1}\right|^2$. Hence
\begin{align*}
	\sum_{j=1}^{K-1} \mathbb{E}_{\{\sigma_i,X_i\}_{i=1}^N} \left[\sup _{u \in \mathcal{F}} \frac{1}{N}\sum_{i=1}^{N} \sigma_{i}\left(u_{j+1}\left(X_{i}\right)-u_j\left(X_{i}\right)\right)\right] \leq\sum_{j=1}^{K-1}\frac{6\epsilon_{j+1}}{\sqrt{N}}\sqrt{\log \left|\mathcal{F}_{j+1}\right|}.
\end{align*}
Now we obtain
\begin{align*}
	\mathfrak{R}_N(\mathcal{F}) &
	\leq \epsilon_K+\sum_{j=1}^{K-1}\frac{6\epsilon_{j+1}}{\sqrt{N}}\sqrt{\log \left|\mathcal{F}_{j+1}\right|}\\
	&=\epsilon_K+\frac{12}{\sqrt{N}}\sum_{j=1}^{K-1}(\epsilon_{j+1}-\epsilon_{j+2})\sqrt{\log\mathcal{C}(\epsilon_{j+1},\mathcal{F},\|\cdot\|_{\infty})}\\
	&\leq\epsilon_{K}+\frac{12}{\sqrt{N}}\int_{\epsilon_{K+1}}^{B/2}\sqrt{\log\mathcal{C}(\epsilon,\mathcal{F},\|\cdot\|_{\infty})}d\epsilon.
\end{align*}
We conclude the lemma by choosing $K$ such that $\epsilon_{K+2}<\delta\leq\epsilon_{K+1}$ for any $0<\delta<B/2$.
\end{proof}

From Lemma $\ref{covering number Lipshcitz parameterization}$ we know that the kep step to bound $\mathcal{C}(\epsilon,\mathcal{F}_i,\|\cdot\|_{\infty})$ with $\mathcal{F}_i$ defined in Lemma $\ref{symmetrization}$ is to compute the upper bound of Lipschitz constant of class $\mathcal{F}_i$, which is done in Lemma $\ref{f Lip}$-$\ref{Lip of Fi}$.
\begin{Lemma} \label{f Lip}
Let $\mathcal{D},\mathfrak{n}_{\mathcal{D}},n_i\in\mathbb{N}^+$, $n_\mathcal{D}=1$, $B_{\theta}\ge 1$ and $\rho$ be a bounded Lipschitz continuous function with $B_{\rho}, L_{\rho}\leq 1$. Set the parameterized function class
$
\mathcal{P}=\mathcal{N}_{\rho}\left(\mathcal{D}, \mathfrak{n}_{\mathcal{D}}, B_{\theta}\right)
$. For any $f(x;\theta)\in\mathcal{P}$, $f(x;\theta)$ is $\sqrt{\mathfrak{n}_{\mathcal{D}}}B_{\theta}^{\mathcal{D}-1}\left(\prod_{i=1}^{\mathcal{D}-1}n_i\right)$-Lipschitz continuous with respect to variable $\theta$, i.e.,
\begin{equation*}
\left|f(x;\theta)-{f}(x;\widetilde{\theta})\right|
\leq\sqrt{\mathfrak{n}_{\mathcal{D}}}B_{\theta}^{\mathcal{D}-1}\left(\prod_{i=1}^{\mathcal{D}-1}n_i\right)\left\|\theta-\widetilde{\theta}\right\|_2,\quad\forall x\in\Omega.
\end{equation*}
\end{Lemma}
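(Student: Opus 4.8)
The plan is to unwind the layer-by-layer recursion defining $f(x;\theta)$ and track how a perturbation of the parameters propagates forward through the network. Fix $x\in\Omega$ and two parameter vectors $\theta = \{A_\ell,\mathbf b_\ell\}$ and $\widetilde\theta = \{\widetilde A_\ell,\widetilde{\mathbf b}_\ell\}$, and let $\mathbf f_\ell,\widetilde{\mathbf f}_\ell$ denote the corresponding intermediate outputs from $(\ref{nn})$. I would first prove by induction on $\ell$ a bound of the shape
\begin{equation*}
\|\mathbf f_\ell - \widetilde{\mathbf f}_\ell\|_2 \le c_\ell \left\|\theta-\widetilde\theta\right\|_2,
\end{equation*}
where $c_\ell$ collects products of the operator norms of the weight matrices (each controlled by $B_\theta$ and the widths $n_i$) and uses $L_\rho\le 1$ so that applying $\rho$ componentwise is $1$-Lipschitz on $\mathbb R^{n_\ell}$. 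The base case $\mathbf f_0=\widetilde{\mathbf f}_0=x$ is trivial. For the inductive step I would write
\begin{equation*}
A_\ell \mathbf f_{\ell-1}+\mathbf b_\ell - \widetilde A_\ell \widetilde{\mathbf f}_{\ell-1}-\widetilde{\mathbf b}_\ell = A_\ell(\mathbf f_{\ell-1}-\widetilde{\mathbf f}_{\ell-1}) + (A_\ell-\widetilde A_\ell)\widetilde{\mathbf f}_{\ell-1} + (\mathbf b_\ell-\widetilde{\mathbf b}_\ell),
\end{equation*}
and estimate the three terms separately: the first by $\|A_\ell\|_2\,\|\mathbf f_{\ell-1}-\widetilde{\mathbf f}_{\ell-1}\|_2$ (inductive hypothesis), the second by $\|A_\ell-\widetilde A_\ell\|_F\,\|\widetilde{\mathbf f}_{\ell-1}\|_2$, and the third by $\|\mathbf b_\ell-\widetilde{\mathbf b}_\ell\|_2$; both $\|A_\ell-\widetilde A_\ell\|_F$ and $\|\mathbf b_\ell-\widetilde{\mathbf b}_\ell\|_2$ are at most $\|\theta-\widetilde\theta\|_2$.

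Two auxiliary estimates feed into this. First, since the entries of $A_\ell$ are bounded by $B_\theta$ and $A_\ell\in\mathbb R^{n_\ell\times n_{\ell-1}}$, we have $\|A_\ell\|_2 \le \|A_\ell\|_F \le \sqrt{n_\ell n_{\ell-1}}\,B_\theta$; to get the cleaner form stated in the lemma I would instead use the cruder $\|A_\ell\|_2 \le n_{\ell-1} B_\theta$ (or absorb the square roots into the stated product of the $n_i$'s — the point is only that each layer contributes a factor $B_\theta$ times a width factor). Second, $\|\widetilde{\mathbf f}_{\ell-1}\|_2$ must be bounded: because $\rho$ is bounded by $B_\rho\le 1$, every hidden layer output satisfies $\|\widetilde{\mathbf f}_{\ell-1}\|_2 \le \sqrt{n_{\ell-1}}$ for $\ell-1\ge 1$, while $\|\widetilde{\mathbf f}_0\|_2 = \|x\|_2 \le \sqrt d$ since $\Omega\subset[0,1]^d$. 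Carrying the recursion through the $\mathcal D-1$ hidden layers and then the final affine map $\mathbf f = A_\mathcal D \mathbf f_{\mathcal D-1}+\mathbf b_\mathcal D$ (with $n_\mathcal D=1$), the accumulated constant is a geometric-type sum dominated by its largest term, namely $B_\theta^{\mathcal D-1}\prod_{i=1}^{\mathcal D-1} n_i$ up to constants; using $B_\theta\ge 1$ lets me bound the whole telescoping sum of partial products by a single multiple of this leading term. Finally, replacing the count of nonzero parameters: since only $\mathfrak n_\mathcal D$ weights are nonzero, the relevant norms of the perturbation and the matrix norm bounds can be sharpened to involve $\sqrt{\mathfrak n_\mathcal D}$ rather than the full parameter count, yielding the factor $\sqrt{\mathfrak n_\mathcal D}$ in the statement.

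The main obstacle is bookkeeping rather than any genuine difficulty: one has to be careful that the "width factors" and the $\sqrt{\mathfrak n_\mathcal D}$ factor are combined consistently, and that the geometric sum of products $\sum_{\ell} \big(\prod_{j>\ell}\|A_j\|_2\big)$ is correctly collapsed into the single clean bound $\sqrt{\mathfrak n_\mathcal D}\,B_\theta^{\mathcal D-1}\prod_{i=1}^{\mathcal D-1} n_i$ claimed in the lemma. I would handle this by first proving the bound with an explicit but ugly constant $c_{\mathcal D} = \sum_{\ell=1}^{\mathcal D}\big(\prod_{j=\ell+1}^{\mathcal D}\|A_j\|_2\big)\cdot(\text{input norm at layer }\ell)$, then invoking $B_\theta\ge1$, $L_\rho\le1$, $B_\rho\le1$ and $\|x\|_2\le\sqrt d\le\sqrt{\mathfrak n_\mathcal D}$ to dominate it by the stated expression. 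The boundedness hypotheses on $\rho$ (both $B_\rho$ and $L_\rho$ at most $1$) are exactly what is needed, and the logistic and tanh functions both satisfy them, so the lemma will apply to the network classes used in the approximation results of Section 4.
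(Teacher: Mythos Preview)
Your proposal is correct and follows the same layer-by-layer perturbation idea as the paper. The one substantive difference is the choice of norm: you track $\|\mathbf f_\ell-\widetilde{\mathbf f}_\ell\|_2$ and matrix operator/Frobenius norms, whereas the paper works componentwise, bounding each $|f_q^{(\ell)}-\widetilde f_q^{(\ell)}|$ in terms of the $\ell_1$ parameter difference $\sum_{j\le \mathfrak n_\ell}|\theta_j-\widetilde\theta_j|$. This buys two simplifications that address exactly the bookkeeping you flagged. First, because the induction hypothesis already carries the full $\ell_1$ sum of parameter differences, the new-layer contribution $\sum_j|a_{qj}^{(\ell)}-\widetilde a_{qj}^{(\ell)}|+|b_q^{(\ell)}-\widetilde b_q^{(\ell)}|$ is simply absorbed into $\sum_{j\le \mathfrak n_\ell}|\theta_j-\widetilde\theta_j|$, so no geometric sum over layers ever appears and there is nothing to collapse. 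Second, the $\sqrt{\mathfrak n_{\mathcal D}}$ arises at the very last line from Cauchy--Schwarz, $\sum_{j\le\mathfrak n_{\mathcal D}}|\theta_j-\widetilde\theta_j|\le \sqrt{\mathfrak n_{\mathcal D}}\,\|\theta-\widetilde\theta\|_2$, rather than from any sharpening of matrix norms along the way. Your $\ell_2$ route would yield a valid Lipschitz bound of the same order, but reproducing the \emph{exact} constant $\sqrt{\mathfrak n_{\mathcal D}}\,B_\theta^{\mathcal D-1}\prod_{i=1}^{\mathcal D-1}n_i$ is cleaner with the paper's componentwise/$\ell_1$ accounting.
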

\begin{proof}
For $\ell=2,\cdots,\mathcal{D}$(the argument for the case of $\ell=\mathcal{D}$ is slightly different),
\begin{align*}
	\left|f_q^{(\ell)}-\widetilde{f}_q^{(\ell)}\right|
	&=\left|\rho\left(\sum_{j=1}^{n_{\ell-1}}a_{qj}^{(\ell)}f_j^{(\ell-1)}+b_q^{(\ell)}\right)-\rho\left(\sum_{j=1}^{n_{\ell-1}}\widetilde{a}_{qj}^{(\ell)}\widetilde{f}_j^{(\ell-1)}+\widetilde{b}_q^{(\ell)}\right)\right|\\
	&\leq L_{\rho}\left|\sum_{j=1}^{n_{\ell-1}}a_{qj}^{(\ell)}f_j^{(\ell-1)}-\sum_{j=1}^{n_{\ell-1}}\widetilde{a}_{qj}^{(\ell)}\widetilde{f}_j^{(\ell-1)}+b_q^{(\ell)}-\widetilde{b}_q^{(\ell)}\right|\\
	&\leq L_{\rho}\sum_{j=1}^{n_{\ell-1}}\left|a_{qj}^{(\ell)}\right|\left|f_j^{(\ell-1)}-\widetilde{f}_j^{(\ell-1)}\right|+L_{\rho}\sum_{j=1}^{n_{\ell-1}}\left|a_{qj}^{(\ell)}-\widetilde{a}_{qj}^{(\ell)}\right|\left|\widetilde{f}_j^{(\ell-1)}\right|+L_{\rho}\left|b_q^{(\ell)}-\widetilde{b}_q^{(\ell)}\right|\\
	&\leq B_{\theta}L_{\rho}\sum_{j=1}^{n_{\ell-1}}\left|f_j^{(\ell-1)}-\widetilde{f}_j^{(\ell-1)}\right|+B_{\rho}L_{\rho}\sum_{j=1}^{n_{\ell-1}}\left|a_{qj}^{(\ell)}-\widetilde{a}_{qj}^{(\ell)}\right|+L_{\rho}\left|b_q^{(\ell)}-\widetilde{b}_q^{(\ell)}\right|\\
	&\leq B_{\theta}\sum_{j=1}^{n_{\ell-1}}\left|f_j^{(\ell-1)}-\widetilde{f}_j^{(\ell-1)}\right|+\sum_{j=1}^{n_{\ell-1}}\left|a_{qj}^{(\ell)}-\widetilde{a}_{qj}^{(\ell)}\right|+\left|b_q^{(\ell)}-\widetilde{b}_q^{(\ell)}\right|.
\end{align*}
For $\ell=1$,
\begin{align*}
	\left|f_q^{(1)}-\widetilde{f}_q^{(1)}\right|
	&= \left|\rho\left(\sum_{j=1}^{n_0}a_{qj}^{(1)}x_j^+b_q^{(1)}\right)-\rho\left(\sum_{j=1}^{n_0}\widetilde{a}_{qj}^{(1)}x_j+\widetilde{b}_q^{(1)}\right)\right|\\
	&\le \sum_{j=1}^{n_{0}}\left|a_{qj}^{(1)}-\widetilde{a}_{qj}^{(1)}\right|+\left|b_q^{(1)}-\widetilde{b}_q^{(1)}\right|
	=\sum_{j=1}^{\mathfrak{n}_1}\left|\theta_j-\widetilde{\theta}_j\right|.
\end{align*}
For $\ell=2$,
\begin{align*}
	\left|f_q^{(2)}-\widetilde{f}_q^{(2)}\right|
	&\leq B_{\theta}\sum_{j=1}^{n_{1}}\left|f_j^{(1)}-\widetilde{f}_j^{(1)}\right|+\sum_{j=1}^{n_{1}}\left|a_{qj}^{(2)}-\widetilde{a}_{qj}^{(2)}\right|+\left|b_q^{(2)}-\widetilde{b}_q^{(2)}\right|\\
	&\leq B_{\theta}\sum_{j=1}^{n_{1}}\sum_{k=1}^{\mathfrak{n}_1}\left|\theta_k-\widetilde{\theta}_k\right|+\sum_{j=1}^{n_{1}}\left|a_{qj}^{(2)}-\widetilde{a}_{qj}^{(2)}\right|+\left|b_q^{(2)}-\widetilde{b}_q^{(2)}\right|\\
	&\leq n_1B_{\theta}\sum_{j=1}^{\mathfrak{n}_2}\left|\theta_j-\widetilde{\theta}_j\right|.
\end{align*}
Assuming that for $\ell\geq2$,
\begin{align*}
\left|f_q^{(\ell)}-\widetilde{f}_q^{(\ell)}\right|
\leq \left(\prod_{i=1}^{\ell-1}n_i\right) B_{\theta}^{\ell-1}\sum_{j=1}^{\mathfrak{n}_{\ell}}\left|\theta_j-\widetilde{\theta}_j\right|,
\end{align*}
we have
\begin{align*}
	\left|f_q^{(\ell+1)}-\widetilde{f}_q^{(\ell+1)}\right|
	&\leq B_{\theta}\sum_{j=1}^{n_{\ell}}\left|f_j^{(\ell)}-\widetilde{f}_j^{(\ell)}\right|+\sum_{j=1}^{n_{\ell}}\left|a_{qj}^{(\ell+1)}-\widetilde{a}_{qj}^{(\ell+1)}\right|+\left|b_q^{(\ell+1)}-\widetilde{b}_q^{(\ell+1)}\right|\\
	&\leq B_{\theta}\sum_{j=1}^{n_{\ell}}\left(\prod_{i=1}^{\ell-1}n_i\right) B_{\theta}^{\ell-1}\sum_{k=1}^{\mathfrak{n}_1}\left|\theta_k-\widetilde{\theta}_k\right|+\sum_{j=1}^{n_{\ell}}\left|a_{qj}^{(\ell+1)}-\widetilde{a}_{qj}^{(\ell+1)}\right|+\left|b_q^{(\ell+1)}-\widetilde{b}_q^{(\ell+1)}\right|\\
	&\leq \left(\prod_{i=1}^{\ell}n_i\right) B_{\theta}^{\ell}\sum_{j=1}^{\mathfrak{n}_{\ell+1}}\left|\theta_j-\widetilde{\theta}_j\right|.
\end{align*}
Hence by induction and H$\mathrm{\ddot{o}}$lder inequality we conclude that
\begin{equation*}
\left|f-\widetilde{f}\right|
\leq \left(\prod_{i=1}^{\mathcal{D}-1}n_i\right) B_{\theta}^{\mathcal{D}-1}\sum_{j=1}^{\mathfrak{n}_{\mathcal{D}}}\left|\theta_j-\widetilde{\theta}_j\right|
\leq\sqrt{\mathfrak{n}_{\mathcal{D}}}B_{\theta}^{\mathcal{D}-1}\left(\prod_{i=1}^{\mathcal{D}-1}n_i\right)\left\|\theta-\widetilde{\theta}\right\|_2.
\end{equation*}
\end{proof}

\begin{Lemma}
Let $\mathcal{D},\mathfrak{n}_{\mathcal{D}},n_i\in\mathbb{N}^+$, $n_\mathcal{D}=1$, $B_{\theta}\ge 1$ and $\rho$ be a function such that $\rho'$ is bounded by $B_{\rho'}$. Set the parameterized function class
$
\mathcal{P}=\mathcal{N}_{\rho}\left(\mathcal{D}, \mathfrak{n}_{\mathcal{D}}, B_{\theta}\right)
$. Let $p=1,\cdots,d$. We have
\begin{align*}
	\left|\partial_{x_p}f_q^{(\ell)}\right|&\leq\left(\prod_{i=1}^{\ell-1}n_i\right)\left(B_{\theta}B_{\rho'}\right)^{\ell},
	\quad\ell=1,2,\cdots,\mathcal{D}-1,\\
	\left|\partial_{x_p}f\right|&\leq\left(\prod_{i=1}^{\mathcal{D}-1}n_i\right)B_{\theta}^{\mathcal{D}}B_{\rho'}^{\mathcal{D}-1}.
\end{align*}
\end{Lemma}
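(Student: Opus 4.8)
The plan is to establish both inequalities by a single induction on the layer index $\ell$, obtained by differentiating the defining recursion $(\ref{nn})$ with respect to $x_p$ and propagating the bound forward through the network.

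First I would treat the base case $\ell=1$. From $f_q^{(1)}(x)=\rho\big(\sum_{j=1}^{n_0}a_{qj}^{(1)}x_j+b_q^{(1)}\big)$ the chain rule gives $\partial_{x_p}f_q^{(1)}=\rho'(\cdot)\,a_{qp}^{(1)}$, so $\big|\partial_{x_p}f_q^{(1)}\big|\leq B_{\rho'}B_\theta$, which is precisely the asserted bound $\big(\prod_{i=1}^{0}n_i\big)(B_\theta B_{\rho'})^1$ under the usual convention that an empty product equals $1$.

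Next, for the inductive step on the hidden layers, I would assume the bound at layer $\ell$ with $\ell\leq\mathcal{D}-2$. Differentiating $f_q^{(\ell+1)}=\rho\big(\sum_{j=1}^{n_\ell}a_{qj}^{(\ell+1)}f_j^{(\ell)}+b_q^{(\ell+1)}\big)$ yields $\partial_{x_p}f_q^{(\ell+1)}=\rho'(\cdot)\sum_{j=1}^{n_\ell}a_{qj}^{(\ell+1)}\partial_{x_p}f_j^{(\ell)}$. Bounding $|\rho'|\leq B_{\rho'}$ and $|a_{qj}^{(\ell+1)}|\leq B_\theta$, summing the $n_\ell$ terms, and inserting the inductive hypothesis $|\partial_{x_p}f_j^{(\ell)}|\leq(\prod_{i=1}^{\ell-1}n_i)(B_\theta B_{\rho'})^\ell$ produces $|\partial_{x_p}f_q^{(\ell+1)}|\leq B_{\rho'}\,n_\ell B_\theta\,(\prod_{i=1}^{\ell-1}n_i)(B_\theta B_{\rho'})^\ell=(\prod_{i=1}^{\ell}n_i)(B_\theta B_{\rho'})^{\ell+1}$, which closes the induction for $\ell=1,\dots,\mathcal{D}-1$.

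Finally, for the last layer I would use that $f=f^{(\mathcal{D})}=A_{\mathcal{D}}f^{(\mathcal{D}-1)}+b_{\mathcal{D}}$ carries no activation, so $\partial_{x_p}f=\sum_{j=1}^{n_{\mathcal{D}-1}}a_{1j}^{(\mathcal{D})}\partial_{x_p}f_j^{(\mathcal{D}-1)}$; applying $|a_{1j}^{(\mathcal{D})}|\leq B_\theta$ and the $\ell=\mathcal{D}-1$ case already proved gives $|\partial_{x_p}f|\leq n_{\mathcal{D}-1}B_\theta(\prod_{i=1}^{\mathcal{D}-2}n_i)(B_\theta B_{\rho'})^{\mathcal{D}-1}=(\prod_{i=1}^{\mathcal{D}-1}n_i)B_\theta^{\mathcal{D}}B_{\rho'}^{\mathcal{D}-1}$. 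No genuine difficulty arises here — the argument is an elementary chain-rule induction, entirely parallel to the proof of Lemma $\ref{f Lip}$ — and the only point requiring care is the bookkeeping of the factors $\prod_i n_i$ and the exponents of $B_\theta$ and $B_{\rho'}$ when passing from the hidden layers (which contribute one $\rho'$ factor per layer) to the affine output layer (where the power of $B_{\rho'}$ remains $\mathcal{D}-1$ while the power of $B_\theta$ becomes $\mathcal{D}$).
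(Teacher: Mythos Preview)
Your proof is correct and follows essentially the same approach as the paper: both differentiate the recursion $(\ref{nn})$ via the chain rule and propagate the bound $|\partial_{x_p}f_q^{(\ell)}|\leq B_\theta B_{\rho'}\sum_{j=1}^{n_{\ell-1}}|\partial_{x_p}f_j^{(\ell-1)}|$ through the layers, with the paper unrolling this recursion directly from $\ell$ down to $1$ while you phrase it as a forward induction from $\ell=1$ upward. The treatment of the final affine layer is also identical.
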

\begin{proof}
For $\ell=1,2,\cdots,\mathcal{D}-1$,
\begin{align*}
	\left|\partial_{x_p}f_q^{(\ell)}\right|
	&=\left|\sum_{j=1}^{n_{\ell-1}}a_{qj}^{(\ell)}\partial_{x_p}f_j^{(\ell-1)}\rho'\left(\sum_{j=1}^{n_{\ell-1}}a_{qj}^{(\ell)}f_j^{(\ell-1)}+b_q^{(\ell)}\right)\right|
	\leq B_{\theta}B_{\rho'}\sum_{j=1}^{n_{\ell-1}}\left|\partial_{x_p}f_j^{(\ell-1)}\right|\\
	&\leq\left(B_{\theta}B_{\rho'}\right)^2\sum_{k=1}^{n_{\ell-1}}\sum_{j=1}^{n_{\ell-2}}\left|\partial_{x_p}f_j^{(\ell-2)}\right|
	=n_{\ell-1}\left(B_{\theta}B_{\rho'}\right)^2\sum_{j=1}^{n_{\ell-2}}\left|\partial_{x_p}f_j^{(\ell-2)}\right|\\
	&\leq\cdots\leq\left(\prod_{i=2}^{\ell-1}n_i\right)\left(B_{\theta}B_{\rho'}\right)^{\ell-1}\sum_{j=1}^{n_{1}}\left|\partial_{x_p}f_j^{(1)}\right| \\
	&\leq\left(\prod_{i=2}^{\ell-1}n_i\right)\left(B_{\theta}B_{\rho'}\right)^{\ell-1}\sum_{j=1}^{n_{1}}B_{\theta}B_{\rho'}=\left(\prod_{i=1}^{\ell-1}n_i\right)\left(B_{\theta}B_{\rho'}\right)^{\ell}.
\end{align*}
The bound for $\left|\partial_{x_p}f\right|$ can be derived similarly.
\end{proof}

\begin{Lemma} \label{Df Lip}
Let $\mathcal{D},\mathfrak{n}_{\mathcal{D}},n_i\in\mathbb{N}^+$, $n_\mathcal{D}=1$, $B_{\theta}\ge 1$ and $\rho$ be a function such that $\rho,\rho'$ are bounded by $B_{\rho}, B_{\rho'}\le 1$ and have Lipschitz constants $L_{\rho}, L_{\rho'}\leq 1$, respectively. Set the parameterized function class
$
\mathcal{P}=\mathcal{N}_{\rho}\left(\mathcal{D}, \mathfrak{n}_{\mathcal{D}}, B_{\theta}\right)
$. Then, for any $f(x;\theta)\in\mathcal{P}$, $p=1,\cdots,d$,  $\partial_{x_p}f(x;\theta)$ is $\sqrt{\mathfrak{n}_{\mathcal{D}}}(\mathcal{D}+1)B_{\theta}^{2\mathcal{D}}\left(\prod_{i=1}^{\mathcal{D}-1}n_i\right)^2$-Lipschitz continuous with respect to variable $\theta$, i.e.,
\begin{equation*}
	\left|\partial_{x_p}f(x;\theta)-\partial_{x_p}f(x;\widetilde{\theta})\right|
	\leq \sqrt{\mathfrak{n}_{\mathcal{D}}}(\mathcal{D}+1)B_{\theta}^{2\mathcal{D}}\left(\prod_{i=1}^{\mathcal{D}-1}n_i\right)^2\left\|\theta-\widetilde{\theta}\right\|_2,\quad \forall x\in\Omega.
\end{equation*}
\end{Lemma}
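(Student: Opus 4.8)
The plan is to mimic the inductive scheme already used in Lemma~\ref{f Lip}, but now carried out for the partial derivative $\partial_{x_p}f_q^{(\ell)}$, which by the chain rule has the form $\partial_{x_p}f_q^{(\ell)} = \rho'\!\left(S_q^{(\ell)}\right)\sum_{j}a_{qj}^{(\ell)}\partial_{x_p}f_j^{(\ell-1)}$, where $S_q^{(\ell)} = \sum_j a_{qj}^{(\ell)}f_j^{(\ell-1)}+b_q^{(\ell)}$ is the preactivation. The key point is that this expression is built from three kinds of quantities: the activations $f_j^{(\ell-1)}$, their derivatives $\partial_{x_p}f_j^{(\ell-1)}$, and the weights; the Lipschitz (in $\theta$) bounds for the first are given by Lemma~\ref{f Lip}, the sup-norm bounds for the first two are given by the boundedness of $\rho$ and by the preceding (unnamed) lemma, and the weights are trivially $1$-Lipschitz in $\theta$. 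So first I would record the uniform bounds $|f_j^{(\ell)}|\le B_\rho\le 1$, $|\partial_{x_p}f_j^{(\ell)}|\le (\prod_{i=1}^{\ell-1}n_i)(B_\theta B_{\rho'})^{\ell}$, and the $\theta$-Lipschitz bound for $f_j^{(\ell)}$ from Lemma~\ref{f Lip}; I would also need a $\theta$-Lipschitz bound for the preactivation $S_q^{(\ell)}$, which follows from the same estimates since $S_q^{(\ell)}$ is an affine combination of activations.

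Next I would differentiate the telescoping estimate: writing $\partial_{x_p}f_q^{(\ell)} - \partial_{x_p}\widetilde f_q^{(\ell)}$ and inserting/removing intermediate terms, I split the difference into (i) a term $\big(\rho'(S_q^{(\ell)})-\rho'(\widetilde S_q^{(\ell)})\big)\sum_j a_{qj}^{(\ell)}\partial_{x_p}f_j^{(\ell-1)}$, controlled by $L_{\rho'}\le 1$ times the $\theta$-Lipschitz bound on $S_q^{(\ell)}$ times the sup bound on $\sum_j a_{qj}^{(\ell)}\partial_{x_p}f_j^{(\ell-1)}$; (ii) a term involving $\sum_j (a_{qj}^{(\ell)}-\widetilde a_{qj}^{(\ell)})\partial_{x_p}f_j^{(\ell-1)}$, controlled by the sup bound on $\partial_{x_p}f_j^{(\ell-1)}$; and (iii) a term $\sum_j \widetilde a_{qj}^{(\ell)}\big(\partial_{x_p}f_j^{(\ell-1)}-\partial_{x_p}\widetilde f_j^{(\ell-1)}\big)$, which is $B_\theta$ times $n_{\ell-1}$ times the inductive hypothesis at level $\ell-1$. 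Collecting these three contributions and using $B_\theta\ge 1$, $B_{\rho'}\le 1$ to absorb constants, I would set up an induction hypothesis of the form $|\partial_{x_p}f_q^{(\ell)}-\partial_{x_p}\widetilde f_q^{(\ell)}|\le C_\ell \sum_{j=1}^{\mathfrak{n}_\ell}|\theta_j-\widetilde\theta_j|$ with $C_\ell$ growing like $(\prod_{i=1}^{\ell-1}n_i)^2 (\ell+1) B_\theta^{2\ell}$, checking the base case $\ell=1$ directly (where $\partial_{x_p}f_q^{(1)} = \rho'(\cdot)a_{qp}^{(1)}$, whose $\theta$-dependence is elementary) and verifying the recursion $C_\ell \le n_{\ell-1}B_\theta C_{\ell-1} + (\text{lower-order terms})$ closes with the claimed constant. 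The final step is the last (output) layer, which is affine rather than activated, so $\partial_{x_p}f = \sum_j a_{\mathcal{D}j}^{(\mathcal{D})}\partial_{x_p}f_j^{(\mathcal{D}-1)} + 0$ and the estimate there is one more application of the same two-term split (weight difference plus propagated derivative difference); finally convert the $\ell^1$ bound $\sum_j|\theta_j-\widetilde\theta_j|$ to $\sqrt{\mathfrak{n}_\mathcal{D}}\,\|\theta-\widetilde\theta\|_2$ by Cauchy--Schwarz, exactly as at the end of Lemma~\ref{f Lip}.

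The main obstacle is the bookkeeping of the three-way split: unlike Lemma~\ref{f Lip}, where each layer contributed a single clean factor of $n_{\ell-1}B_\theta$, here the factor $\rho'(S_q^{(\ell)})$ introduces a dependence on the preactivation, so the per-layer growth picks up an extra factor of $(\prod n_i)$ and an additive $(\ell+1)$ counting the chain-rule branches; I would need to be careful that the sup-norm bounds on $\partial_{x_p}f_j^{(\ell-1)}$ (which themselves carry a $\prod_{i=1}^{\ell-2}n_i$ and $(B_\theta B_{\rho'})^{\ell-1}$) combine with the Lipschitz bounds to produce exactly $(\prod_{i=1}^{\mathcal{D}-1}n_i)^2$ and not a larger power, and that the telescoping sum of the $(\rho'$-difference$)$ contributions over all layers is what produces the linear-in-$\mathcal{D}$ factor $(\mathcal{D}+1)$ rather than an exponential one. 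Everything else is a routine induction of the same flavor as Lemma~\ref{f Lip}.
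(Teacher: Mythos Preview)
Your proposal is correct and follows essentially the same route as the paper: a three-term telescoping split (the $\rho'$-difference term controlled via $L_{\rho'}$ and the $\theta$-Lipschitz bound on the preactivation, the weight-difference term controlled via the sup bound on $\partial_{x_p}f_j^{(\ell-1)}$ from the preceding lemma, and the propagated derivative difference handled inductively), an induction hypothesis of exactly the form $(\ell+1)B_\theta^{2\ell}\bigl(\prod_{i=1}^{\ell-1}n_i\bigr)^2\sum_k|\theta_k-\widetilde\theta_k|$, separate treatment of the affine output layer, and a final Cauchy--Schwarz step to pass from $\ell^1$ to $\ell^2$. Your diagnosis of the main bookkeeping obstacle---that the $\rho'(S_q^{(\ell)})$ factor is what forces the extra $\prod n_i$ and the additive accumulation yielding the linear $(\mathcal{D}+1)$---matches precisely what happens in the paper's computation.
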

\begin{proof}
For $\ell=1$,
\begin{align*}
	&\left|\partial_{x_p}f_q^{(1)}-\partial_{x_p}\widetilde{f}_q^{(1)}\right| \\
	=&\left|a_{qp}^{(1)}\rho'\left(\sum_{j=1}^{n_{0}}a_{qj}^{(1)}x_j+b_q^{(1)}\right)-\widetilde{a}_{qp}^{(1)}\rho'\left(\sum_{j=1}^{n_{0}}\widetilde{a}_{qj}^{(1)}x_j+\widetilde{b}_q^{(1)}\right)\right|\\
	\leq&\left|a_{qp}^{(1)}-\widetilde{a}_{qp}^{(1)}\right|\left|\rho'\left(\sum_{j=1}^{n_{0}}a_{qj}^{(1)}x_j+b_q^{(1)}\right)\right|
	+\left|\widetilde{a}_{qp}^{(1)}\right|\left|\rho'\left(\sum_{j=1}^{n_{0}}a_{qj}^{(1)}x_j+b_q^{(1)}\right)-\rho'\left(\sum_{j=1}^{n_{0}}\widetilde{a}_{qj}^{(1)}x_j+\widetilde{b}_q^{(1)}\right)\right|\\
	\leq& B_{\rho'}\left|a_{qp}^{(1)}-\widetilde{a}_{qp}^{(1)}\right|+B_{\theta}L_{\rho'}\sum_{j=1}^{n_{0}}\left|a_{qj}^{(1)}-\widetilde{a}_{qj}^{(1)}\right|+B_{\theta}L_{\rho'}\left|{b}_q^{(1)}-\widetilde{b}_q^{(1)}\right|\leq 2B_{\theta}\sum_{k=1}^{\mathfrak{n}_{1}}\left|\theta_k-\widetilde{\theta}_k\right|
\end{align*}
For $\ell\geq 2$, we establish the Recurrence relation:
\begin{align*}
	&\left|\partial_{x_p}f_q^{(\ell)}-\partial_{x_p}\widetilde{f}_q^{(\ell)}\right|\\
	&\leq\sum_{j=1}^{n_{\ell-1}}\left|a_{qj}^{(\ell)}\right|\left|\partial_{x_p}f_j^{(\ell-1)}\right|\left|\rho'\left(\sum_{j=1}^{n_{\ell-1}}a_{qj}^{(\ell)}f_j^{(\ell-1)}+b_q^{(\ell)}\right)-\rho'\left(\sum_{j=1}^{n_{\ell-1}}\widetilde{a}_{qj}^{(\ell)}\widetilde{f}_j^{(\ell-1)}+\widetilde{b}_q^{(\ell)}\right)\right|\\
	&\quad+\sum_{j=1}^{n_{\ell-1}}\left|a_{qj}^{(\ell)}\partial_{x_p}f_j^{(\ell-1)}-\widetilde{a}_{qj}^{(\ell)}\partial_{x_p}\widetilde{f}_j^{(\ell-1)}\right|\left|\rho'\left(\sum_{j=1}^{n_{\ell-1}}\widetilde{a}_{qj}^{(\ell)}\widetilde{f}_j^{(\ell-1)}+\widetilde{b}_q^{(\ell)}\right)\right|\\
	&\leq B_{\theta}L_{\rho'}\sum_{j=1}^{n_{\ell-1}}\left|\partial_{x_p}f_j^{(\ell-1)}\right|\left(\sum_{j=1}^{n_{\ell-1}}\left|a_{qj}^{(\ell)}f_j^{(\ell-1)}-\widetilde{a}_{qj}^{(\ell)}\widetilde{f}_j^{(\ell-1)}\right|+\left|b_q^{(\ell)}-\widetilde{b}_q^{(\ell)}\right|\right)\\
	&\quad+B_{\rho'}\sum_{j=1}^{n_{\ell-1}}\left|a_{qj}^{(\ell)}\partial_{x_p}f_j^{(\ell-1)}-\widetilde{a}_{qj}^{(\ell)}\partial_{x_p}\widetilde{f}_j^{(\ell-1)}\right|\\
	&\leq B_{\theta}L_{\rho'}\sum_{j=1}^{n_{\ell-1}}\left|\partial_{x_p}f_j^{(\ell-1)}\right|\left(B_{\rho}\sum_{j=1}^{n_{\ell-1}}\left|a_{qj}^{(\ell)}-\widetilde{a}_{qj}^{(\ell)}\right|+B_{\theta}\sum_{j=1}^{n_{\ell-1}}\left|f_j^{(\ell-1)}-\widetilde{f}_j^{(\ell-1)}\right|+\left|b_q^{(\ell)}-\widetilde{b}_q^{(\ell)}\right|\right)\\
	&\quad+B_{\rho'}B_{\theta}\sum_{j=1}^{n_{\ell-1}}\left|\partial_{x_p}f_j^{(\ell-1)}-\partial_{x_p}\widetilde{f}_j^{(\ell-1)}\right|+B_{\rho'}\sum_{j=1}^{n_{\ell-1}}\left|a_{qj}^{(\ell)}-\widetilde{a}_{qj}^{(\ell)}\right|\left|\partial_{x_p}\widetilde{f}_j^{(\ell-1)}\right|\\
	&\leq B_{\theta}\sum_{j=1}^{n_{\ell-1}}\left|\partial_{x_p}f_j^{(\ell-1)}\right|\left(\sum_{j=1}^{n_{\ell-1}}\left|a_{qj}^{(\ell)}-\widetilde{a}_{qj}^{(\ell)}\right|+B_{\theta}\sum_{j=1}^{n_{\ell-1}}\left|f_j^{(\ell-1)}-\widetilde{f}_j^{(\ell-1)}\right|+\left|b_q^{(\ell)}-\widetilde{b}_q^{(\ell)}\right|\right)\\
	&\quad+B_{\theta}\sum_{j=1}^{n_{\ell-1}}\left|\partial_{x_p}f_j^{(\ell-1)}-\partial_{x_p}\widetilde{f}_j^{(\ell-1)}\right|+\sum_{j=1}^{n_{\ell-1}}\left|a_{qj}^{(\ell)}-\widetilde{a}_{qj}^{(\ell)}\right|\left|\partial_{x_p}\widetilde{f}_j^{(\ell-1)}\right|\\
	&\leq B_{\theta}\left(\prod_{i=1}^{\ell-1}n_i\right)B_{\theta}^{\ell}\left(\sum_{j=1}^{n_{\ell-1}}\left|a_{qj}^{(\ell)}-\widetilde{a}_{qj}^{(\ell)}\right|+B_{\theta}\sum_{j=1}^{n_{\ell-1}}
	\left(\prod_{i=1}^{\ell-2}n_i\right) B_{\theta}^{\ell-2}\sum_{k=1}^{\mathfrak{n}_{\ell-1}}\left|\theta_k-\widetilde{\theta}_k\right|+\left|b_q^{(\ell)}-\widetilde{b}_q^{(\ell)}\right|\right)\\
	&\quad+B_{\theta}\sum_{j=1}^{n_{\ell-1}}\left|\partial_{x_p}f_j^{(\ell-1)}-\partial_{x_p}\widetilde{f}_j^{(\ell-1)}\right|+\sum_{j=1}^{n_{\ell-1}}\left|a_{qj}^{(\ell)}-\widetilde{a}_{qj}^{(\ell)}\right|\left(\prod_{i=1}^{\ell-2}n_i\right)B_{\theta}^{\ell-1}\\
	&\leq B_{\theta}\sum_{j=1}^{n_{\ell-1}}\left|\partial_{x_p}f_j^{(\ell-1)}-\partial_{x_p}\widetilde{f}_j^{(\ell-1)}\right|
	+B_{\theta}^{2\ell}\left(\prod_{i=1}^{\ell-1}n_i\right)^2\sum_{k=1}^{\mathfrak{n}_{\ell}}\left|\theta_k-\widetilde{\theta}_k\right|
\end{align*}
For $\ell=2$,
\begin{align*}
	\left|\partial_{x_p}f_q^{(2)}-\partial_{x_p}\widetilde{f}_q^{(2)}\right|&\leq B_{\theta}\sum_{j=1}^{n_{1}}\left|\partial_{x_p}f_j^{(1)}-\partial_{x_p}\widetilde{f}_j^{(1)}\right|
	+B_{\theta}^{4}n_1^2\sum_{k=1}^{\mathfrak{n}_{2}}\left|\theta_k-\widetilde{\theta}_k\right|\\
	&\leq2B_{\theta}^2n_1\sum_{k=1}^{\mathfrak{n}_{1}}\left|\theta_k-\widetilde{\theta}_k\right|+B_{\theta}^{4}n_1^2\sum_{k=1}^{\mathfrak{n}_{2}}\left|\theta_k-\widetilde{\theta}_k\right|
	\leq 3B_{\theta}^{4}n_1^2\sum_{k=1}^{\mathfrak{n}_{2}}\left|\theta_k-\widetilde{\theta}_k\right|
\end{align*}
Assuming that for $\ell\geq2$,
\begin{equation*}
	\left|\partial_{x_p}f_q^{(\ell)}-\partial_{x_p}\widetilde{f}_q^{(\ell)}\right|\leq (\ell+1) B_{\theta}^{2\ell}\left(\prod_{i=1}^{\ell-1}n_i\right)^2\sum_{k=1}^{\mathfrak{n}_{\ell}}\left|\theta_k-\widetilde{\theta}_k\right|
\end{equation*}
we have
\begin{align*}
	&\left|\partial_{x_p}f_q^{(\ell+1)}-\partial_{x_p}\widetilde{f}_q^{(\ell+1)}\right| \\
	\leq& B_{\theta}\sum_{j=1}^{n_{\ell}}\left|\partial_{x_p}f_j^{(\ell)}-\partial_{x_p}\widetilde{f}_j^{(\ell)}\right|
	+B_{\theta}^{2\ell+2}\left(\prod_{i=1}^{\ell}n_i\right)^2\sum_{k=1}^{\mathfrak{n}_{\ell+1}}\left|\theta_k-\widetilde{\theta}_k\right|\\
	\leq& B_{\theta}\sum_{j=1}^{n_{\ell}}(\ell+1) B_{\theta}^{2\ell}\left(\prod_{i=1}^{\ell-1}n_i\right)^2\sum_{k=1}^{\mathfrak{n}_{\ell}}\left|\theta_k-\widetilde{\theta}_k\right|
	+B_{\theta}^{2\ell+2}\left(\prod_{i=1}^{\ell}n_i\right)^2\sum_{k=1}^{\mathfrak{n}_{\ell+1}}\left|\theta_k-\widetilde{\theta}_k\right|\\
	\leq& (\ell+2)B_{\theta}^{2\ell+2}\left(\prod_{i=1}^{\ell}n_i\right)^2\sum_{k=1}^{\mathfrak{n}_{\ell+1}}\left|\theta_k-\widetilde{\theta}_k\right|
\end{align*}
Hence by by induction and H$\mathrm{\ddot{o}}$lder inequality we conclude that
\begin{equation*}
\left|\partial_{x_p}f-\partial_{x_p}\widetilde{f}\right|
\leq (\mathcal{D}+1)B_{\theta}^{2\mathcal{D}}\left(\prod_{i=1}^{\mathcal{D}-1}n_i\right)^2\sum_{k=1}^{\mathfrak{n}_{\mathcal{D}}}\left|\theta_k-\widetilde{\theta}_k\right|
\leq \sqrt{\mathfrak{n}_{\mathcal{D}}}(\mathcal{D}+1)B_{\theta}^{2\mathcal{D}}\left(\prod_{i=1}^{\mathcal{D}-1}n_i\right)^2\left\|\theta-\widetilde{\theta}\right\|_2
\end{equation*}
\end{proof}

\begin{Lemma} \label{Lip of Fi}
Let $\mathcal{D},\mathfrak{n}_{\mathcal{D}},n_i\in\mathbb{N}^+$, $n_\mathcal{D}=1$, $B_{\theta}\ge 1$ and $\rho$ be a function such that $\rho,\rho'$ are bounded by $B_{\rho}, B_{\rho'}\le 1$ and have Lipschitz constants $L_{\rho}, L_{\rho'}\leq 1$, respectively. Set the parameterized function class
$
\mathcal{P}=\mathcal{N}_{\rho}\left(\mathcal{D}, \mathfrak{n}_{\mathcal{D}}, B_{\theta}\right)
$
Then, for any $f_i(x;\theta),f_i(x;\widetilde{\theta})\in\mathcal{F}_i$, $i=1,\cdots,5$, we have
\begin{align*}
|f_i(x;\theta)| &\le B_i, \quad \forall x\in\Omega,\\
|f_i(x;\theta)-f_i(x;\widetilde{\theta})| &\leq L_i\|\theta-\widetilde{\theta}\|_2,\quad \forall x\in\Omega,
\end{align*}
with $B_1=d\left(\prod_{i=1}^{\mathcal{D}-1}n_i\right)^2B_{\theta}^{2\mathcal{D}}$, $B_2=B_4=B_3^2$, $B_3=B_5=(n_{\mathcal{D}-1}+1)B_{\theta}$, and
\begin{align*}
L_1&=2d\sqrt{\mathfrak{n}_{\mathcal{D}}}(\mathcal{D}+1)B_{\theta}^{3\mathcal{D}}\left(\prod_{i=1}^{\mathcal{D}-1}n_i\right)^3,
&L_2&=2\sqrt{\mathfrak{n}_{\mathcal{D}}}B_{\theta}^{\mathcal{D}}(n_{\mathcal{D}-1}+1)\left(\prod_{i=1}^{\mathcal{D}-1}n_i\right),\\
L_3&=\sqrt{\mathfrak{n}_{\mathcal{D}}}B_{\theta}^{\mathcal{D}-1}\left(\prod_{i=1}^{\mathcal{D}-1}n_i\right),
&L_4&=L_2,\quad L_5=L_3.
\end{align*}
\end{Lemma}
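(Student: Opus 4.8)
The plan is to assemble the five pairs of estimates directly from the boundedness and Lipschitz bounds already established in Lemma~\ref{f Lip}, the (unlabelled) derivative-bound lemma preceding Lemma~\ref{Df Lip}, and Lemma~\ref{Df Lip} itself, combined with the elementary identity $a^2-b^2=(a+b)(a-b)$.

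First I would treat $\mathcal{F}_3$ and $\mathcal{F}_5$. Since $\rho$ is bounded by $B_\rho\le1$, every hidden output satisfies $|f_j^{(\mathcal{D}-1)}|\le1$; passing through the last affine layer $f=A_\mathcal{D}f_{\mathcal{D}-1}+b_\mathcal{D}$ with $|a_{ij}^{(\mathcal{D})}|,|b_i^{(\mathcal{D})}|\le B_\theta$ gives $|f(x;\theta)|\le n_{\mathcal{D}-1}B_\theta+B_\theta=(n_{\mathcal{D}-1}+1)B_\theta=B_3$. The Lipschitz constant $L_3=\sqrt{\mathfrak{n}_\mathcal{D}}B_\theta^{\mathcal{D}-1}(\prod_{i=1}^{\mathcal{D}-1}n_i)$ is exactly the conclusion of Lemma~\ref{f Lip}. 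Restricting $u$ to $\partial\Omega$ only shrinks the domain over which quantities are taken, so $B_5=B_3$ and $L_5=L_3$.

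For $\mathcal{F}_2$ and $\mathcal{F}_4$ I would square: $|u^2|\le B_3^2=B_2$, and
\begin{equation*}
\left|u(x;\theta)^2-u(x;\widetilde{\theta})^2\right|=\left|u(x;\theta)+u(x;\widetilde{\theta})\right|\left|u(x;\theta)-u(x;\widetilde{\theta})\right|\le2B_3L_3\|\theta-\widetilde{\theta}\|_2,
\end{equation*}
which, after substituting $B_3$ and $L_3$, is the claimed $L_2=2\sqrt{\mathfrak{n}_\mathcal{D}}B_\theta^\mathcal{D}(n_{\mathcal{D}-1}+1)(\prod_{i=1}^{\mathcal{D}-1}n_i)$; once more $B_4=B_2$ and $L_4=L_2$ by restriction to $\partial\Omega$.

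Finally, for $\mathcal{F}_1$ I would write $|\nabla u|^2=\sum_{p=1}^d(\partial_{x_p}u)^2$. The derivative-bound lemma gives $|\partial_{x_p}u|\le B_\theta^\mathcal{D}B_{\rho'}^{\mathcal{D}-1}(\prod_{i=1}^{\mathcal{D}-1}n_i)\le B_\theta^\mathcal{D}(\prod_{i=1}^{\mathcal{D}-1}n_i)$ (using $B_{\rho'}\le1$), hence $|\nabla u|^2\le d\,B_\theta^{2\mathcal{D}}(\prod_{i=1}^{\mathcal{D}-1}n_i)^2=B_1$. For the Lipschitz bound I would apply $a^2-b^2=(a+b)(a-b)$ termwise with $a=\partial_{x_p}u(x;\theta)$, $b=\partial_{x_p}u(x;\widetilde{\theta})$, bounding $|a+b|\le2B_\theta^\mathcal{D}(\prod_{i=1}^{\mathcal{D}-1}n_i)$ and $|a-b|$ by the Lipschitz constant $\sqrt{\mathfrak{n}_\mathcal{D}}(\mathcal{D}+1)B_\theta^{2\mathcal{D}}(\prod_{i=1}^{\mathcal{D}-1}n_i)^2\|\theta-\widetilde{\theta}\|_2$ from Lemma~\ref{Df Lip}; summing over $p=1,\dots,d$ contributes the factor $d$ and produces $L_1=2d\sqrt{\mathfrak{n}_\mathcal{D}}(\mathcal{D}+1)B_\theta^{3\mathcal{D}}(\prod_{i=1}^{\mathcal{D}-1}n_i)^3$. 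There is no genuine obstacle here: the only care required is bookkeeping of the powers of $B_\theta$ and of $\prod_i n_i$ when composing the estimates, and noting that each squaring or product step must consume the uniform bound $B_i$ before the corresponding Lipschitz bound $L_i$ can be read off.
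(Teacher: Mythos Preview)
Your proposal is correct and matches the paper's approach exactly: the paper's proof is simply ``Direct result from Lemma~\ref{f Lip}, \ref{Df Lip} and some calculation,'' and you have carried out precisely that calculation, using the bound $|f|\le(n_{\mathcal{D}-1}+1)B_\theta$ from the last affine layer, the derivative bound lemma, and the factorization $a^2-b^2=(a+b)(a-b)$ to pass from the pointwise and Lipschitz estimates on $u$ and $\partial_{x_p}u$ to those on $u^2$ and $|\nabla u|^2$.
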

\begin{proof}
Direct result from Lemma $\ref{f Lip}$, $\ref{Df Lip}$ and some calculation.
\end{proof}

Now we state our main result with respect to statistical error.
\begin{Theorem} \label{sta error}
Let $\mathcal{D},\mathfrak{n}_{\mathcal{D}},n_i\in\mathbb{N}^+$, $n_\mathcal{D}=1$, $B_{\theta}\ge 1$ and $\rho$ be a function such that $\rho,\rho'$ are bounded by $B_{\rho}, B_{\rho'}\le 1$ and have Lipschitz constants $L_{\rho}, L_{\rho'}\leq 1$, respectively. Set the parameterized function class
$
	\mathcal{P}=\mathcal{N}_{\rho}\left(\mathcal{D}, \mathfrak{n}_{\mathcal{D}}, B_{\theta}\right)
$. Then, if $N=M$, we have
\begin{equation*}
\mathbb{E}_{\{{X_i}\}_{i=1}^{N},\{{Y_j}\}_{j=1}^{M}}\sup_{u\in\mathcal{P}}\pm\left[\mathcal{L}(u)-\widehat{\mathcal{L}}(u)\right]\leq \frac{C(\Omega,coe,\alpha)}{\beta} \frac{d\sqrt{\mathcal{D}}\mathfrak{n}_{\mathcal{D}}^{2\mathcal{D}}B_{\theta}^{2\mathcal{D}}}{\sqrt{N}} \sqrt{\log\left(d\mathcal{D}\mathfrak{n}_{\mathcal{D}}B_{\theta}N\right)}.
\end{equation*}
\end{Theorem}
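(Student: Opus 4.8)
The plan is to chain together the reduction lemmas of this section. First I would apply Lemma~\ref{triangle inequality} to split the left-hand side into the five pieces $\mathbb{E}\sup_{u\in\mathcal{P}}\pm[\mathcal{L}_k(u)-\widehat{\mathcal{L}}_k(u)]$, $k=1,\dots,5$, and then Lemma~\ref{symmetrization} to bound each piece by a Rademacher complexity; since $N=M$ this yields
\[
\mathbb{E}_{\{X_i\},\{Y_j\}}\sup_{u\in\mathcal{P}}\pm\big[\mathcal{L}(u)-\widehat{\mathcal{L}}(u)\big]\le\frac{C(\Omega,coe,\alpha)}{\beta}\sum_{k=1}^{5}\mathfrak{R}_N(\mathcal{F}_k),
\]
the factor $1/\beta$ entering only through the boundary classes $\mathcal{F}_4,\mathcal{F}_5$. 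It then suffices to bound each $\mathfrak{R}_N(\mathcal{F}_k)$.

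For this I would invoke the chaining estimate of Lemma~\ref{chaining} with $B=B_k$, the uniform bound on $\mathcal{F}_k$ from Lemma~\ref{Lip of Fi}. The entropy integral there needs an upper bound on $\mathcal{C}(\epsilon,\mathcal{F}_k,\|\cdot\|_\infty)$: by the $L_k$-Lipschitz-in-$\theta$ property of Lemma~\ref{Lip of Fi} and Lemma~\ref{covering number Lipshcitz parameterization} this is at most $\mathcal{C}(\epsilon/L_k,\Theta,\|\cdot\|_2)$, and since $\Theta$ consists of vectors with at most $\mathfrak{n}_{\mathcal{D}}$ coordinates each bounded by $B_\theta$ (so $\|\theta\|_2\le\sqrt{\mathfrak{n}_{\mathcal{D}}}\,B_\theta$), Lemma~\ref{covering number Euclidean space} gives $\log\mathcal{C}(\epsilon,\mathcal{F}_k,\|\cdot\|_\infty)\le\mathfrak{n}_{\mathcal{D}}\log(2\mathfrak{n}_{\mathcal{D}}B_\theta L_k/\epsilon)$. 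As this is decreasing in $\epsilon$, the entropy integral is at most $\tfrac{B_k}{2}\sqrt{\mathfrak{n}_{\mathcal{D}}\log(2\mathfrak{n}_{\mathcal{D}}B_\theta L_k/\delta)}$, and taking $\delta=1/\sqrt{N}$ in Lemma~\ref{chaining} produces
\[
\mathfrak{R}_N(\mathcal{F}_k)\le\frac{4}{\sqrt{N}}+\frac{6B_k}{\sqrt{N}}\sqrt{\mathfrak{n}_{\mathcal{D}}\log\big(2\mathfrak{n}_{\mathcal{D}}B_\theta L_k\sqrt{N}\big)}.
\]

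Finally I would substitute the explicit $B_k$ and $L_k$ from Lemma~\ref{Lip of Fi}, using $\prod_{i=1}^{\mathcal{D}-1}n_i\le\mathfrak{n}_{\mathcal{D}}^{\mathcal{D}-1}$ and $B_\theta,\mathfrak{n}_{\mathcal{D}}\ge1$ to absorb lower-order factors. The dominant class is $\mathcal{F}_1$, for which $B_1\sqrt{\mathfrak{n}_{\mathcal{D}}}\lesssim d\,\mathfrak{n}_{\mathcal{D}}^{2\mathcal{D}}B_\theta^{2\mathcal{D}}$, while for every $k$ one has $\log(2\mathfrak{n}_{\mathcal{D}}B_\theta L_k\sqrt{N})\lesssim\mathcal{D}\log(d\mathcal{D}\mathfrak{n}_{\mathcal{D}}B_\theta N)$, so $\sqrt{\mathfrak{n}_{\mathcal{D}}\log(\cdots)}\lesssim\sqrt{\mathcal{D}}\,\sqrt{\mathfrak{n}_{\mathcal{D}}}\,\sqrt{\log(d\mathcal{D}\mathfrak{n}_{\mathcal{D}}B_\theta N)}$; the classes $\mathcal{F}_2,\dots,\mathcal{F}_5$ contribute strictly smaller powers of $d,\mathcal{D},\mathfrak{n}_{\mathcal{D}},B_\theta$. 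Summing the five bounds, absorbing the $O(1/\sqrt{N})$ terms, and reinstating $C(\Omega,coe,\alpha)/\beta$ gives the asserted estimate.

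The conceptual work is entirely carried by the lemmas already established; the one delicate point is the last step, namely verifying that every monomial in $d,\mathcal{D},\mathfrak{n}_{\mathcal{D}},B_\theta$ produced by the five pairs $(B_k,L_k)$ is dominated by $d\sqrt{\mathcal{D}}\,\mathfrak{n}_{\mathcal{D}}^{2\mathcal{D}}B_\theta^{2\mathcal{D}}$ and that the various logarithms collapse into the single factor $\sqrt{\log(d\mathcal{D}\mathfrak{n}_{\mathcal{D}}B_\theta N)}$. This is routine but unavoidable bookkeeping, and it is where the main effort lies.
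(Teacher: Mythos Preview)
Your proposal is correct and follows essentially the same route as the paper: split via Lemma~\ref{triangle inequality}, symmetrize via Lemma~\ref{symmetrization}, bound each $\mathfrak{R}_N(\mathcal{F}_k)$ by the chaining Lemma~\ref{chaining} with the covering-number estimate obtained from Lemmas~\ref{covering number Lipshcitz parameterization} and~\ref{covering number Euclidean space}, choose $\delta=1/\sqrt{N}$, and simplify using the explicit $B_k,L_k$ of Lemma~\ref{Lip of Fi} together with $\prod_{i=1}^{\mathcal{D}-1}n_i\le\mathfrak{n}_{\mathcal{D}}^{\mathcal{D}-1}$. The paper carries out exactly this computation, so your identified ``delicate point'' (the monomial bookkeeping) is indeed the only remaining work and is handled just as you outline.
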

\begin{proof}
From Lemma $\ref{covering number Euclidean space}$, $\ref{covering number Lipshcitz parameterization}$ and $\ref{chaining}$, we have
\begin{align*}
	\mathfrak{R}_N(\mathcal{F}_i)&\leq\inf_{0<\delta<B_i/2}\left(4\delta+\frac{12}{\sqrt{N}}\int_{\delta}^{B_i/2}\sqrt{\log\mathcal{C}(\epsilon,\mathcal{F}_i,\|\cdot\|_{\infty})}d\epsilon\right)\\
	&\leq\inf_{0<\delta<B_i/2}\left(4\delta+\frac{12}{\sqrt{N}}\int_{\delta}^{B_i/2}\sqrt{\mathfrak{n}_{\mathcal{D}}\log\left(\frac{2L_iB_\theta\sqrt{\mathfrak{n}_{\mathcal{D}}}}{\epsilon}\right)}d\epsilon\right) \\
	&\leq\inf_{0<\delta<B_i/2}\left(4\delta+\frac{6\sqrt{\mathfrak{n}_{\mathcal{D}}}B_i}{\sqrt{N}}\sqrt{\log\left(\frac{2L_iB_\theta\sqrt{\mathfrak{n}_{\mathcal{D}}}}{\delta}\right)}\right).
\end{align*}
Choosing $\delta=1/\sqrt{N}<B_i/2$ and applying Lemma $\ref{Lip of Fi}$, we have
\begin{align}
	\mathfrak{R}_N(\mathcal{F}_i)&\leq\frac{4}{\sqrt{N}}+\frac{6\sqrt{\mathfrak{n}_{\mathcal{D}}}B_i}{\sqrt{N}}\sqrt{\log\left(2L_iB_\theta\sqrt{\mathfrak{n}_{\mathcal{D}}}\sqrt{N}\right)}\nonumber\\
	&\leq C\frac{d\sqrt{\mathfrak{n}_{\mathcal{D}}}\left(\prod_{i=1}^{\mathcal{D}-1}n_i\right)^2B_{\theta}^{2\mathcal{D}}}{\sqrt{N}}\sqrt{\log\left(4d\mathfrak{n}_{\mathcal{D}}(\mathcal{D}+1)B_{\theta}^{3\mathcal{D}+1}\left(\prod_{i=1}^{\mathcal{D}-1}n_i\right)^3\sqrt{N}\right)} \nonumber\\
	&\leq C\frac{d\sqrt{\mathcal{D}}\mathfrak{n}_{\mathcal{D}}^{2\mathcal{D}}B_{\theta}^{2\mathcal{D}}}{\sqrt{N}} \sqrt{\log\left(d\mathcal{D}\mathfrak{n}_{\mathcal{D}}B_{\theta}N\right)}\label{sta error1}
\end{align}
Combining Lemma $\ref{triangle inequality}$, $\ref{symmetrization}$ and $(\ref{sta error1})$, we obtain, if $N=M$,
\begin{equation*}
\mathbb{E}_{\{{X_i}\}_{i=1}^{N},\{{Y_j}\}_{j=1}^{M}}\sup_{u\in\mathcal{P}}\pm\left[\mathcal{L}(u)-\widehat{\mathcal{L}}(u)\right]
\leq \frac{C(\Omega,coe,\alpha)}{\beta} \frac{d\sqrt{\mathcal{D}}\mathfrak{n}_{\mathcal{D}}^{2\mathcal{D}}B_{\theta}^{2\mathcal{D}}}{\sqrt{N}} \sqrt{\log\left(d\mathcal{D}\mathfrak{n}_{\mathcal{D}}B_{\theta}N\right)}.
\end{equation*}

\end{proof}

\section{Covergence Rate for the Ritz Method}

\begin{Theorem}
Let (A) holds. Assume that $\mathcal{E}_{opt}=0$. Let $\rho$ be logistic function $\frac{1}{1+e^{-x}}$ or tanh function $\frac{e^x-e^{-x}}{e^x+e^{-x}}$. Let $u_{\phi_{\mathcal{A}}}$ be the solution of problem $(\ref{optimization})$ generated by a random solver and $\widehat{u}_{\phi}$ be an optimal solution of problem $(\ref{optimization})$.

(1)Let $u_R$ be the weak solution of Robin problem $(\ref{second order elliptic equation})(\ref{robin})$. For any $\epsilon>0$ and $\mu \in (0,1)$, set
the parameterized function class
\begin{equation*}
\mathcal{P}=\mathcal{N}_{\rho}\left(C\log(d+1),C(d,\beta)\epsilon^{-d /(1-\mu )},C(d,\beta) \epsilon^{-(9d+8)/(2-2\mu)}\right)
\end{equation*}
and number of samples
\begin{equation*}
N=M=C(d,\Omega, coe,\alpha,\beta)\epsilon^{-Cd\log(d+1)/(1-\mu)},
\end{equation*}
if the optimization error $\mathcal{E}_{opt} = \widehat{\mathcal{L}}\left(u_{\phi_{\mathcal{A}}}\right)-\widehat{\mathcal{L}}\left(\widehat{u}_{\phi}\right)\le \epsilon$, then
\begin{equation*}
\mathbb{E}_{\{{X_i}\}_{i=1}^{N},\{{Y_j}\}_{j=1}^{M}}\|u_{\phi_\mathcal{A}}-u_R\|_{H^1(\Omega)}\leq C(\Omega, coe,\alpha)\epsilon.
\end{equation*}

(2)Let $u_D$ be the weak solution of Dirichlet problem $(\ref{second order elliptic equation})(\ref{dirichlet})$. Set $\alpha=1,g=0$. For any $\epsilon>0$, let $\beta=C(coe)\epsilon$ as the penalty parameter, set the parameterized function class
\begin{equation*}
\mathcal{P}=\mathcal{N}_{\rho}\left(C\log(d+1),C(d)\epsilon^{-5d /2(1-\mu )},C(d) \epsilon^{-(45d+40)/(4-4\mu)}\right)
\end{equation*}
and number of samples
\begin{equation*}
N=M=C(d,\Omega, coe)\epsilon^{-Cd\log(d+1)/(1-\mu)},
\end{equation*}
if the optimization error $\mathcal{E}_{opt} \le \epsilon$, then
\begin{equation*}
\mathbb{E}_{\{{X_i}\}_{i=1}^{N},\{{Y_j}\}_{j=1}^{M}}\|u_{\phi_\mathcal{A}}-u_D\|_{H^1(\Omega)}\leq C(\Omega, coe)\epsilon.
\end{equation*}
\end{Theorem}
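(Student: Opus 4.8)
The plan is to substitute the prescribed network architecture and sample sizes into the error decomposition of Proposition~\ref{error decomposition} and estimate the resulting terms separately. Since $t\mapsto\sqrt{t}$ is concave, Jensen's inequality applied to Proposition~\ref{error decomposition}(1) yields
\begin{equation*}
\mathbb{E}\,\|u_{\phi_{\mathcal{A}}}-u_R\|_{H^1(\Omega)}\leq C(coe)\left(\mathcal{E}_{app}+\mathbb{E}\,\mathcal{E}_{sta}+\mathbb{E}\,\mathcal{E}_{opt}\right)^{1/2},
\end{equation*}
so it suffices to show that the deterministic term $\mathcal{E}_{app}$ and the expectation $\mathbb{E}\,\mathcal{E}_{sta}$ are each at most $C\epsilon^2$ for the stated choices, the term $\mathbb{E}\,\mathcal{E}_{opt}$ being controlled by the hypothesis on the optimization error; together these make the bracket $O(\epsilon^{2})$ and the result follows by taking square roots. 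Part (2) is handled identically, with Proposition~\ref{error decomposition}(2) contributing the extra summand $\|u_R-u_D\|_{H^1(\Omega)}^2$, which we estimate via Lemma~\ref{penalty convergence}.

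For the approximation term, Lemma~\ref{uR regularity} gives $u_R\in H^2(\Omega)$ with $\|u_R\|_{H^2(\Omega)}\leq C(coe)/\beta$. Applying Corollary~\ref{app error} to the normalized function $u_R/\|u_R\|_{H^2(\Omega)}$ with tolerance $\epsilon'$ and rescaling the output layer produces some $\bar u\in\mathcal{P}$ with $\|\bar u-u_R\|_{H^1(\Omega)}\leq (C(coe)/\beta)\,\epsilon'$, implemented by a $\rho$-network of depth $C\log(d+1)$, with $C(d)(\epsilon')^{-d/(1-\mu)}$ nonzero weights and weights bounded by $C(d,coe,\beta)(\epsilon')^{-(9d+8)/(2-2\mu)}$. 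Inserting this bound into $\mathcal{E}_{app}=\frac1\beta C(\Omega,coe,\alpha)\inf_{\bar u\in\mathcal{P}}\|\bar u-u_R\|_{H^1(\Omega)}^2$ and imposing $\mathcal{E}_{app}\leq\epsilon^2$ forces the choice $\epsilon'\sim C(\Omega,coe,\alpha)\,\beta^{3/2}\epsilon$; substituting it recovers exactly the width and weight bounds listed in the theorem for the Robin problem, the $\beta$- and $coe$-powers being absorbed into the constants written as $C(d,\beta)$.

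For the statistical term, feed the architecture just fixed into Theorem~\ref{sta error}. The decisive observation is that the depth is only $\mathcal{D}=C\log(d+1)$, so the factor $\mathfrak{n}_{\mathcal{D}}^{2\mathcal{D}}B_{\theta}^{2\mathcal{D}}$, which separately looks like $\left(\epsilon^{-d/(1-\mu)}\right)^{C\log(d+1)}$ times $\left(\epsilon^{-(9d+8)/(2-2\mu)}\right)^{C\log(d+1)}$, is still of the form $\epsilon^{-Cd\log(d+1)/(1-\mu)}$, while the accompanying polynomial-in-$d$, polynomial-in-$\mathcal{D}$, and $\sqrt{\log(d\mathcal{D}\mathfrak{n}_{\mathcal{D}}B_{\theta}N)}$ factors are of strictly lower order in $1/\epsilon$. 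Hence
\begin{equation*}
\mathbb{E}\,\mathcal{E}_{sta}\leq\frac{C(\Omega,coe,\alpha)}{\beta}\cdot\frac{\epsilon^{-Cd\log(d+1)/(1-\mu)}\sqrt{\log(d\mathcal{D}\mathfrak{n}_{\mathcal{D}}B_{\theta}N)}}{\sqrt{N}},
\end{equation*}
and taking $N=M=C(d,\Omega,coe,\alpha,\beta)\,\epsilon^{-Cd\log(d+1)/(1-\mu)}$ with a sufficiently large absolute constant in the exponent absorbs the extra $\epsilon^{-4}$ needed to beat $\epsilon^2$, the factor $\sqrt{\log(\cdots N)}$ (which grows only like $\sqrt{\log(1/\epsilon)}$), the $1/\beta$, and all $d$-polynomials, so that $\mathbb{E}\,\mathcal{E}_{sta}\leq\epsilon^2$. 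Combining with the Jensen estimate gives part (1).

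For part (2), Lemma~\ref{penalty convergence} gives $\|u_R-u_D\|_{H^1(\Omega)}^2\leq C(coe)\beta^2$, so the prescribed $\beta=C(coe)\epsilon$ makes this summand $\leq C\epsilon^2$; with this $\beta$ the approximation tolerance becomes $\epsilon'\sim\epsilon^{5/2}$, which turns the exponents $-d/(1-\mu)$ and $-(9d+8)/(2-2\mu)$ of part (1) into $-5d/(2(1-\mu))$ and $-(45d+40)/(4-4\mu)$ exactly as listed, while the statistical estimate and the choice of $N=M$ keep the same form. The only genuinely delicate point in the whole argument is the exponent bookkeeping in the statistical step: one must verify that raising the $1/\epsilon$-powers of $\mathfrak{n}_{\mathcal{D}}$ and $B_{\theta}$ to the power $2\mathcal{D}$ keeps the exponent at $\mathcal{O}(d\log d)$ — this is precisely why the logarithmic depth in Corollary~\ref{app error} is indispensable, since a polynomial depth would ruin it — and that every residual constant and logarithmic factor can be swallowed by slightly enlarging the absolute constant $C$ in the exponent of $N$. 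Everything else is a direct substitution into Proposition~\ref{error decomposition}, Corollary~\ref{app error}, Lemma~\ref{penalty convergence} and Theorem~\ref{sta error}.
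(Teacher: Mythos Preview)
Your proposal is correct and follows essentially the same route as the paper: normalize $u_R$ (you use the $H^2$ norm, the paper uses the $H^1$ norm, but both are $O(C(coe)/\beta)$ by Lemma~\ref{uR regularity}), apply Corollary~\ref{app error} with tolerance $\sim\beta^{3/2}\epsilon$ to bound $\mathcal{E}_{app}$, plug the resulting architecture into Theorem~\ref{sta error} and choose $N$ so that $\mathcal{E}_{sta}\le\epsilon^2$, then invoke Proposition~\ref{error decomposition} (together with Lemma~\ref{penalty convergence} and $\beta=C(coe)\epsilon$ for part (2)). Your explicit use of Jensen's inequality to pass the expectation inside the square root, and your remark on why the logarithmic depth keeps the exponent of $N$ at $O(d\log d)$, are clarifying additions that the paper leaves implicit.
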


\begin{Remark}
Dirichlet boundary condition  corresponding to a constrained minimization problem, which may cause some difficulties in computation. The penalty method has been applied in finite element methods and finite volume method \cite{Babuska1973The,Maury2009Numerical}. It is also been used in deep PDEs solvers \cite{Weinan2017The,raissi2019physics,Xu2020finite} since it is not easy to construct a network with given values on the boundary. Recently, \cite{Mller2021ErrorEF,muller2020deep} also study the convergence of DRM with Dirichlet boundary condition via penalty method.
However, the  analysis in  \cite{Mller2021ErrorEF,muller2020deep} is based on some additional conditions, and we do not need these conditions to obtain the error inducing by the penalty. More importantly,   we provide the convergence rate analysis involving the statistical error caused by finite samples used in the SGD training,  while in \cite{Mller2021ErrorEF,muller2020deep} they  do  not consider the statistical error at all.
\end{Remark}

\begin{proof}
We first normalize the solution.
\begin{align*}
\inf_{\bar{u}\in\mathcal{P}}\|\bar{u}-u_R\|_{H^1(\Omega)}
&=\|u_R\|_{H^1(\Omega)}\inf_{\bar{u}\in\mathcal{P}}\left\|\frac{\bar{u}}{\|u_R\|_{H^1(\Omega)}}-\frac{u_R}{\|u_R\|_{H^1(\Omega)}}\right\|_{H^1(\Omega)}\\
&=\|u_R\|_{H^1(\Omega)}\inf_{\bar{u}\in\mathcal{P}}\left\|\bar{u}-\frac{u_R}{\|u_R\|_{H^1(\Omega)}}\right\|_{H^1(\Omega)}
\leq\frac{ C(coe)}{\beta}\inf_{\bar{u}\in\mathcal{P}}\left\|\bar{u}-\frac{u_R}{\|u_R\|_{H^1(\Omega)}}\right\|_{H^1(\Omega)}
\end{align*}
where in the third step we apply Lemma $\ref{uR regularity}$. By Lemma $\ref{uR regularity}$ and Corollary $\ref{app error}$, there exists a neural network function
\begin{equation*}
 u_{\rho}\in\mathcal{P}=\mathcal{N}_{\rho}\left(C\log (d+1),C(d)\left(\frac{1}{\beta^{3/2}\epsilon}\right)^{d /(1-\mu )},C(d) \left(\frac{1}{\beta^{3/2}\epsilon}\right)^{(9d+8)/(2-2\mu)}\right)
\end{equation*}
such that
\begin{equation*}
\left\|u_{\rho}-\frac{u_R}{\|u_R\|_{H^1(\Omega)}}\right\|_{H^1(\Omega)}\leq \beta^{3/2}\epsilon.
\end{equation*}
Hence,
\begin{align}
	\mathcal{E}_{app}&=\frac{1}{\beta}C(\Omega,coe,\alpha)\inf_{\bar{u}\in\mathcal{P}}\|\bar{u}-u_R\|_{H^1(\Omega)}^2\nonumber\\
	&\leq\frac{1}{\beta^3}C(\Omega,coe,\alpha)\inf_{\bar{u}\in\mathcal{P}}\left\|\bar{u}-\frac{u_R}{\|u_R\|_{H^1(\Omega)}}\right\|_{H^1(\Omega)}^2\leq C(\Omega,coe,\alpha)\epsilon^2\label{covergence rate1}.
\end{align}
Since $\rho,\rho'$ are bounded and Lipschitz continuous with $B_{\rho}, B_{\rho'}, L_{\rho}, L_{\rho'}\leq 1$ for $\rho=\frac{1}{1+e^{-x}}$ and $\rho=\frac{e^x-e^{-x}}{e^x+e^{-x}}$,
we can apply Theorem $\ref{sta error}$ with $\mathcal{D} = C\log(d+1)$, $\mathfrak{n}_{\mathcal{D}} =C(d)\left(\frac{1}{\beta^{3/2}\epsilon}\right)^{d /(1-\mu )}$, $B_{\theta}=C(d) \left(\frac{1}{\beta^{3/2}\epsilon}\right)^{(9d+8)/(2-2\mu)}$. Now we conclude that by setting
\begin{equation} \label{covergence rate2}
N,M=C(d,\Omega, coe,\alpha)\left(\frac{1}{\beta^{3/2}\epsilon}\right)^{Cd\log(d+1)/(1-\mu)}
\end{equation}
we have
\begin{equation} \label{covergence rate3}
	\mathcal{E}_{sta}=\mathbb{E}_{\{{X_i}\}_{i=1}^{N},\{{Y_j}\}_{j=1}^{M}} \left[\sup _{u \in \mathcal{P}} \left[\mathcal{L}(u)-\widehat{\mathcal{L}}(u)\right] + \sup _{u \in \mathcal{P}} \left[\widehat{\mathcal{L}}(u) - \mathcal{L}(u)\right]\right]\leq \epsilon^2.
\end{equation}
Combining Proposition $\ref{error decomposition}(1)$, $(\ref{covergence rate1})$ and $(\ref{covergence rate3})$ yields (1).

Setting the penalty parameter
\begin{equation} \label{covergence rate4}
\beta=C(coe)\epsilon
\end{equation}
and combining Lemma $\ref{penalty convergence}$, Proposition $\ref{error decomposition}(2)$ and $(\ref{covergence rate1})-(\ref{covergence rate4})$ yields (2).
\end{proof}


\section{Conclusions and Extensions}\label{section:conclusion}
\par This paper provided an analysis of convergence rate for deep Ritz methods for elliptic   equations with Drichilet, Neumann and Robin boundary condition, respectively. Specifically, our study  shed light on how to set depth and width of networks and how to set the penalty parameter to achieve the desired convergence rate in terms of number of training samples.

\par There are several interesting further research directions. First, the approximation and statistical error bounds deriving here can be used for studying the nonasymptotic convergence rate for residual based method, such as PINNs. Second, the similar result may be applicable to deep Ritz methods for optimal control problems and inverse problems.

\section{Acknowledgements}
{We would like to thank Ingo G$\mathrm{\ddot{u}}$hring and  Mones Raslan for helpful discussions on approximation error.}

The work of Y. Jiao is supported in part by the National Science Foundation of China under Grant 11871474 and by the research
fund of KLATASDSMOE. The work of Y. Wang is supported in part by the Hong Kong Research Grant Council grants 16308518 and
16317416 and  HK Innovation Technology Fund ITS/044/18FX, as well as Guangdong-Hong Kong-Macao Joint Laboratory for Data-Driven Fluid Mechanics and Engineering Applications.

\bibliographystyle{siam}
\bibliography{ref}

\begin{thebibliography}{10}

\bibitem{adams2003sobolev}
{\sc R.~A. Adams and J.~J. Fournier}, {\em Sobolev spaces}, Elsevier, 2003.

\bibitem{anandkumar2020neural}
{\sc A.~Anandkumar, K.~Azizzadenesheli, K.~Bhattacharya, N.~Kovachki, Z.~Li,
  B.~Liu, and A.~Stuart}, {\em Neural operator: Graph kernel network for
  partial differential equations}, in ICLR 2020 Workshop on Integration of Deep
  Neural Models and Differential Equations, 2020.

\bibitem{Cosmin2019Artificial}
{\sc C.~Anitescu, E.~Atroshchenko, N.~Alajlan, and T.~Rabczuk}, {\em Artificial
  neural network methods for the solution of second order boundary value
  problems}, Cmc-computers Materials \& Continua, 59 (2019), pp.~345--359.

\bibitem{Babuska1973The}
{\sc I.~Babuska}, {\em The finite element method with penalty}, Mathematics of
  Computation, 27 (1973), pp.~221--228.

\bibitem{Berner2020Numerically}
{\sc J.~Berner, M.~Dablander, and P.~Grohs}, {\em Numerically solving
  parametric families of high-dimensional kolmogorov partial differential
  equations via deep learning}, in Advances in Neural Information Processing
  Systems, vol.~33, Curran Associates, Inc., 2020, pp.~16615--16627.

\bibitem{brenner2007mathematical}
{\sc S.~Brenner and R.~Scott}, {\em The mathematical theory of finite element
  methods}, vol.~15, Springer Science \& Business Media, 2007.

\bibitem{ciarlet2002finite}
{\sc P.~G. Ciarlet}, {\em The finite element method for elliptic problems},
  SIAM, 2002.

\bibitem{Dissanayake1994neural}
{\sc M.~Dissanayake and N.~Phan-Thien}, {\em Neural-network-based
  approximations for solving partial differential equations}, Communications in
  Numerical Methods in Engineering, 10 (1994), pp.~195--201.

\bibitem{duan2021convergence}
{\sc C.~Duan, Y.~Jiao, Y.~Lai, X.~Lu, and Z.~Yang}, {\em Convergence rate
  analysis for deep ritz method}, arXiv preprint arXiv:2103.13330,  (2021).

\bibitem{e2021barron}
{\sc W.~E, C.~Ma, and L.~Wu}, {\em The {Barron} space and the flow-induced
  function spaces for neural network models}, arXiv preprint arXiv:1906.08039,
  (2021).

\bibitem{e2020observations}
{\sc W.~E and S.~Wojtowytsch}, {\em Some observations on partial differential
  equations in {Barron} and multi-layer spaces}, arXiv preprint
  arXiv:2012.01484,  (2020).

\bibitem{evans1998partial}
{\sc L.~C. Evans}, {\em Partial differential equations}, Graduate studies in
  mathematics, 19 (1998).

\bibitem{gilbarg2015elliptic}
{\sc D.~Gilbarg and N.~S. Trudinger}, {\em Elliptic partial differential
  equations of second order}, vol.~224, springer, 2015.

\bibitem{Goodfellow2014Generative}
{\sc I.~Goodfellow, J.~Pouget-Abadie, M.~Mirza, B.~Xu, D.~Warde-Farley,
  S.~Ozair, A.~Courville, and Y.~Bengio}, {\em Generative adversarial
  networks}, Advances in Neural Information Processing Systems, 3 (2014).

\bibitem{Greenfeld2019Learning}
{\sc D.~Greenfeld, M.~Galun, R.~Basri, I.~Yavneh, and R.~Kimmel}, {\em Learning
  to optimize multigrid {PDE} solvers}, in Proceedings of the 36th
  International Conference on Machine Learning, K.~Chaudhuri and
  R.~Salakhutdinov, eds., vol.~97 of Proceedings of Machine Learning Research,
  PMLR, 09--15 Jun 2019, pp.~2415--2423.

\bibitem{guhring2021approximation}
{\sc I.~G{\"u}hring and M.~Raslan}, {\em Approximation rates for neural
  networks with encodable weights in smoothness spaces}, Neural Networks, 134
  (2021), pp.~107--130.

\bibitem{Han2018solving}
{\sc J.~Han, A.~Jentzen, and W.~E}, {\em Solving high-dimensional partial
  differential equations using deep learning}, Proceedings of the National
  Academy of Sciences, 115 (2018), pp.~8505--8510.

\bibitem{he2015delving}
{\sc K.~He, X.~Zhang, S.~Ren, and J.~Sun}, {\em Delving deep into rectifiers:
  Surpassing human-level performance on imagenet classification}, in
  Proceedings of the IEEE international conference on computer vision, 2015,
  pp.~1026--1034.

\bibitem{hong2021priori}
{\sc Q.~Hong, J.~W. Siegel, and J.~Xu}, {\em A priori analysis of stable neural
  network solutions to numerical pdes}, arXiv preprint arXiv:2104.02903,
  (2021).

\bibitem{hong2021rademacher}
{\sc Q.~Hong, J.~W. Siegel, and J.~Xu}, {\em Rademacher complexity and
  numerical quadrature analysis of stable neural networks with applications to
  numerical pdes}, arXiv preprint arXiv:2104.02903,  (2021).

\bibitem{hsieh2018learning}
{\sc J.-T. Hsieh, S.~Zhao, S.~Eismann, L.~Mirabella, and S.~Ermon}, {\em
  Learning neural pde solvers with convergence guarantees}, in International
  Conference on Learning Representations, 2018.

\bibitem{Hughes2012the}
{\sc T.~J. Hughes}, {\em The Finite Element Method: Linear Static and Dynamic
  Finite Element Analysis}, Courier Corporation, 2012.

\bibitem{Isaac1998Artificial}
{\sc I.~E. Lagaris, A.~Likas, and D.~I. Fotiadis}, {\em Artificial neural
  networks for solving ordinary and partial differential equations.}, IEEE
  Trans. Neural Networks, 9 (1998), pp.~987--1000.

\bibitem{Li2020Advances}
{\sc Z.~Li, N.~Kovachki, K.~Azizzadenesheli, B.~Liu, A.~Stuart,
  K.~Bhattacharya, and A.~Anandkumar}, {\em Multipole graph neural operator for
  parametric partial differential equations}, in Advances in Neural Information
  Processing Systems, H.~Larochelle, M.~Ranzato, R.~Hadsell, M.~F. Balcan, and
  H.~Lin, eds., vol.~33, Curran Associates, Inc., 2020, pp.~6755--6766.

\bibitem{li2021fourier}
{\sc Z.~Li, N.~B. Kovachki, K.~Azizzadenesheli, B.~liu, K.~Bhattacharya,
  A.~Stuart, and A.~Anandkumar}, {\em Fourier neural operator for parametric
  partial differential equations}, in International Conference on Learning
  Representations, 2021.

\bibitem{lu2021priori}
{\sc J.~Lu, Y.~Lu, and M.~Wang}, {\em A priori generalization analysis of the
  deep ritz method for solving high dimensional elliptic equations}, arXiv
  preprint arXiv:2101.01708,  (2021).

\bibitem{DeepXDE}
{\sc L.~Lu, X.~Meng, Z.~Mao, and G.~E. Karniadakis}, {\em Deepxde: A deep
  learning library for solving differential equations}, SIAM Review, 63 (2021),
  pp.~208--228.

\bibitem{Luo2020TwoLayerNN}
{\sc T.~Luo and H.~Yang}, {\em Two-layer neural networks for partial
  differential equations: Optimization and generalization theory}, arXiv
  preprint arXiv:2006.15733,  (2020).

\bibitem{Maury2009Numerical}
{\sc B.~Maury}, {\em Numerical analysis of a finite element/volume penalty
  method}, Siam Journal on Numerical Analysis, 47 (2009), pp.~1126--1148.

\bibitem{Mishra2020EstimatesOT}
{\sc S.~Mishra and R.~Molinaro}, {\em Estimates on the generalization error of
  physics informed neural networks (pinns) for approximating pdes}, arXiv
  preprint arXiv:2007.01138,  (2020).

\bibitem{muller2020deep}
{\sc J.~M{\"u}ller and M.~Zeinhofer}, {\em Deep ritz revisited}, in ICLR 2020
  Workshop on Integration of Deep Neural Models and Differential Equations,
  2020.

\bibitem{Mller2021ErrorEF}
{\sc J.~M{\"u}ller and M.~Zeinhofer}, {\em Error estimates for the variational
  training of neural networks with boundary penalty}, arXiv preprint
  arXiv:2103.01007,  (2021).

\bibitem{Quarteroni2008Numerical}
{\sc A.~Quarteroni and A.~Valli}, {\em Numerical Approximation of Partial
  Differential Equations}, vol.~23, Springer Science \& Business Media, 2008.

\bibitem{raissi2019physics}
{\sc M.~Raissi, P.~Perdikaris, and G.~E. Karniadakis}, {\em Physics-informed
  neural networks: A deep learning framework for solving forward and inverse
  problems involving nonlinear partial differential equations}, Journal of
  Computational Physics, 378 (2019), pp.~686--707.

\bibitem{shalev2014understanding}
{\sc S.~Shalev-Shwartz and S.~Ben-David}, {\em Understanding machine learning:
  From theory to algorithms}, Cambridge university press, 2014.

\bibitem{Shin2020ErrorEO}
{\sc Y.~Shin, Z.~Zhang, and G.~Karniadakis}, {\em Error estimates of residual
  minimization using neural networks for linear pdes}, arXiv preprint
  arXiv:2010.08019,  (2020).

\bibitem{silver2016mastering}
{\sc D.~Silver, A.~Huang, C.~J. Maddison, A.~Guez, L.~Sifre, G.~Van
  Den~Driessche, J.~Schrittwieser, I.~Antonoglou, V.~Panneershelvam,
  M.~Lanctot, et~al.}, {\em Mastering the game of go with deep neural networks
  and tree search}, nature, 529 (2016), pp.~484--489.

\bibitem{Justin2018DGM}
{\sc J.~A. Sirignano and K.~Spiliopoulos}, {\em Dgm: A deep learning algorithm
  for solving partial differential equations}, Journal of Computational
  Physics, 375 (2018), pp.~1339--1364.

\bibitem{Thomas2013Numerical}
{\sc J.~Thomas}, {\em Numerical Partial Differential Equations: Finite
  Difference Methods}, vol.~22, Springer Science \& Business Media, 2013.

\bibitem{Kiwon2020Solver}
{\sc K.~Um, R.~Brand, Y.~R. Fei, P.~Holl, and N.~Thuerey}, {\em
  Solver-in-the-loop: Learning from differentiable physics to interact with
  iterative pde-solvers}, in Advances in Neural Information Processing Systems,
  vol.~33, Curran Associates, Inc., 2020, pp.~6111--6122.

\bibitem{Wang2020WhenAW}
{\sc S.~Wang, X.~Yu, and P.~Perdikaris}, {\em When and why pinns fail to train:
  A neural tangent kernel perspective}, arXiv preprint arXiv:2007.14527,
  (2020).

\bibitem{Yufei2020Learning}
{\sc Y.~Wang, Z.~Shen, Z.~Long, and B.~Dong}, {\em Learning to discretize:
  Solving 1d scalar conservation laws via deep reinforcement learning},
  Communications in Computational Physics, 28 (2020), pp.~2158--2179.

\bibitem{Weinan2017The}
{\sc E.~Weinan and T.~Yu}, {\em The deep ritz method: A deep learning-based
  numerical algorithm for solving variational problems}, Communications in
  Mathematics and Statistics, 6 (2017), pp.~1--12.

\bibitem{Xu2020finite}
{\sc J.~Xu}, {\em Finite neuron method and convergence analysis},
  Communications in Computational Physics, 28 (2020), pp.~1707--1745.

\bibitem{Yaohua2020weak}
{\sc Y.~Zang, G.~Bao, X.~Ye, and H.~Zhou}, {\em Weak adversarial networks for
  high-dimensional partial differential equations}, Journal of Computational
  Physics, 411 (2020), p.~109409.

\end{thebibliography}

\end{document}